\newcommand{\R}{{\mathbb R}}       
\newcommand{\Z}{{\mathbb Z}}       
\newcommand{\DD}{{\mathcal D}}
\newcommand{\HH}{{\mathcal H}}
\newcommand{\LL}{{\mathcal L}}
\newcommand{\BZ}{{\mathcal B}}
\newcommand{\GZ}{{\mathcal G}}
\newcommand{\RR}{{\mathcal R}}
\newcommand{\CH}{{\mathcal Ch}}
\newcommand{\EE}{{\mathcal E}}
\newcommand{\SSS}{{\mathcal S}}
\newcommand{\TT}{{\mathcal T}}
\newcommand{\CC}{{\mathcal C}}
\newcommand{\diam}{{\rm diam}}
\newcommand{\dist}{{\rm dist}}
\newcommand{\fiproof}{{\hspace*{\fill} $\square$ \vspace{2pt}}}
\newcommand{\interior}[1]{{\stackrel{\mbox{\scriptsize$\circ$}}{#1}}}
\newcommand{\rf}[1]{{(\ref{#1})}}
\newcommand{\supp}{{\rm supp}}
\newcommand{\vphi}{{\varphi}}
\newcommand{\ve}{{\varepsilon}}
\newcommand{\vv}{}
\newcommand{\vvv}{}
\newcommand{\wt}[1]{{\widetilde{#1}}}
\newcommand{\wh}[1]{{\widehat{#1}}}
\newcommand{\noi}{\noindent}
\newcommand{\lip}{{\rm Lip}}
\newcommand{\sss}{{\mathcal S}}
\newcommand{\reg}{{\rm Reg}}
\newcommand{\rest}{{\lfloor}}
\newcommand{\inter}[1]{{\stackrel{\mbox{\scriptsize$\circ$}}{#1}}}
\newtheorem{theorem}{Theorem}[section]
\newtheorem{lemma}[theorem]{Lemma}
\newtheorem{propo}[theorem]{Proposition}
\newtheorem*{lemma*}{Lemma}
\theoremstyle{definition}
\newtheorem{definition}[theorem]{Definition}
\theoremstyle{remark}
\newtheorem{rem}[theorem]{Remark}
\numberwithin{equation}{section}
\newcommand{\brem}{\begin{rem}}
\newcommand{\erem}{\end{rem}}
\begin{document}

\title{Mass transport and uniform rectifiability}

\author{Xavier Tolsa}
\thanks{Partially supported by grants
2009SGR-000420 (Generalitat de
Catalunya) and MTM-2010-16232 (Spain).}

\address{Xavier Tolsa. Instituci\'{o} Catalana de Recerca
i Estudis Avan\c{c}ats (ICREA) and Departament de
Ma\-te\-m\`a\-ti\-ques, Universitat Aut\`onoma de Bar\-ce\-lo\-na,
Catalonia} \email{xtolsa@mat.uab.cat}

\begin{abstract}
In this paper we characterize the so called uniformly rectifiable sets of David and Semmes 
in terms of the Wasserstein distance $W_2$ from optimal mass transport. To obtain this result, we first prove a localization theorem for the distance $W_2$ which asserts that if $\mu$ and $\nu$ are probability measures
in $\R^n$, $\vphi$ is a radial bump function smooth enough so that $\int\vphi\,d\mu\gtrsim1$, 
and $\mu$ has a density bounded from above and from below on $\supp(\vphi)$, then 
$W_2(\vphi\mu,\,a\vphi\nu)\leq c\,W_2(\mu,\nu),$
where $a=\int\vphi\,d\mu/
\int\vphi\,d\nu$.
\end{abstract}

\maketitle


\vspace{-4mm}
\section{Introduction}\vv

In this paper we characterize the so called uniformly rectifiable sets of David and Semmes \cite{DS2}
in terms of the Wasserstein distance $W_2$ from optimal mass transport. To obtain this result, a fundamental
tool is a new localization theorem for the distance $W_2$, which we think that has its own interest.

To state our results in detail we need to introduce some notation and terminology.
Recall that, for $1\leq p<\infty$, the Wasserstein distance $W_p$ between two (Borel) probability measures 
$\mu,\nu$ on $\R^d$ with finite $p$-th moments (i.e., $\int|x|^p\,d\mu(x)<\infty$ and $\int|x|^p\,d\nu(x)<\infty$) is defined by
$$W_p(\mu,\nu) = \inf_\pi \left(\int_{\R^n\times\R^n} |x-y|^p\,d\pi(x,y)\right)^{1/p},$$
where the infimum is taken over all probability measures $\pi$ on $\R^n\times \R^n$ whose marginals are 
$\mu$ and $\nu$. That is, $\pi(A\times \R^n)=\mu(A)$ and $\pi(\R^n\times A)=\nu(A)$ for all measurable sets
$A\subset\R^n$.
The same definition makes sense if instead of probability measures one considers
measures $\mu,\nu,\pi$ of the same mass.
In the particular case $p=1$, by the Kantorovich duality, the distance $W_1$ can also be characterized as follows: 
\begin{equation}\label{eqw1}
W_1(\mu,\nu) = \sup\Bigl\{ \Bigl|{\textstyle \int f\,d\mu  -
\int f\,d\nu}\Bigr|:\,{\rm Lip}(f) \leq1\Bigr\}.
\end{equation}

\enlargethispage{1mm}
The Wasserstein distances play a key role in many problems of optimal mass transport. Further, quite recently they have also been shown to be useful in questions in connection with PDE's (such as the Boltzmann equation) and Riemannian geometry. See  \cite{Villani-ams} and \cite{AGS} for two basic and modern 
references on Wasserstein distances and mass transport.

Let us turn our attention to uniform rectifiability now.
Given $0<n\leq d$, we say that a Borel measure $\mu$ on $\R^d$ is $n$-dimensional Ahlfors-David regular, or simply 
AD-regular, if there exists 
some constant $c_0$ such that $c_0^{-1}r^n\leq\mu(B(x,r))\leq c_0r^n$ for all $x\in\supp(\mu)$, $0<r\leq\diam(\supp(\mu))$.
It is not difficult to see that such a measure~$\mu$ must be of the form $\mu=\rho\,\HH^n\rest {\supp(\mu)}$, where
 $\rho$ is some positive function bounded from above and from below and $\HH^n$ stands for the $n$-dimensional
Hausdorff measure. A Borel set $E\subset \R^d$ is called AD-regular if the measure $\HH^n_{E}$ is AD-regular, where we used the notation $\HH^n_E = \HH^n\rest E$.

The notion of uniform $n$-rectifiability (or simply, uniform rectifiability) was introduced by David 
and Semmes in \cite{DS2}.
For $n=1$, an AD-regular $1$-dimensional measure is uniformly rectifiable if and only if its support is 
contained in an 
AD-regular curve.
For an arbitrary integer $n\geq1$, the notion is more complicated. One of the many equivalent definitions (see Chapter
 I.1 of \cite{DS2}) is the following: $\mu$ is uniformly rectifiable if there exist constants $\theta,M>0$ so that, for
each $x\in\supp(\mu)$ and $R>0$, there is a Lipschitz mapping $g$ from the $n$-dimensional ball $B_n(0,r)\subset\R^n$
into $\R^d$ such that $g$ has Lipschitz norm $\leq M$ and
$$\mu\bigl(B(x,r)\cap g(B_n(0,r))\bigr) \geq \theta r^n.$$
In the language of \cite{DS2}, this means that {\em $\supp(\mu)$ has big pieces of Lipschitz images of $\R^n$}.
A Borel set $E\subset\R^d$ is called uniformly rectifiable if $\HH^n_{E}$ is uniformly rectifiable.

The reason why uniform rectifiability  has attracted much attention in the last years is 
because this is the natural notion in many problems where rectifiability is involved in a quantitative way, such as in problems in connection with singular integral operators.
 Indeed, as shown in \cite{DS2}, it turns out that an $n$-dimensional AD-regular measure $\mu$ is 
uniformly rectifiable if and only if a sufficiently big class of singular integral operators with an $n$-dimensional odd Calder\'on-Zygmund kernel 
is bounded in $L^2(\mu)$. In the case $n=1$, it was shown in \cite{MMV} that 
uniform rectifiability is equivalent the $L^2(\mu)$ boundedness 
of only one operator, namely the Cauchy transform. The analogous result in higher dimensions involving the
$n$-dimensional Riesz transforms instead of the Cauchy transform is still open and has been subject of much investigation. See \cite{Mattila-Preiss}, \cite{MV}, \cite{MT}, \cite{Vihtila}, or \cite{Volberg}, besides \cite{DS1} and \cite{DS2}, for instance.

In this paper we will characterize uniform rectifiability in terms of some scale invariant coefficients
$\alpha_p$. They are defined as follows.
Let $\vphi:\R^d\to[0,1]$ be a radial Lipschitz function such that $\chi_{B(0,2)}\leq\vphi\leq \chi_{B(0,3)}$
which also satisfies
\begin{equation}\label{eqfi00}
c_2^{-1}\, \dist(x,\partial B(0,3))^2\leq\vphi(x)\leq c_2\, \dist(x,\partial B(0,3))^2,\!\!\!\qquad|\nabla \vphi(x)|\leq c_2 \dist(x,\partial B(0,3)),
\end{equation}
for all $x\in B(0,3)$. For a ball $B=B(x,r)\subset\R^d$, we denote
$$\vphi_B(y) = \vphi\Bigl(\frac{y-x}r\Bigr).$$
Given an $n$-dimensional AD-regular measure $\mu$ on $\R^d$ and a ball $B$ with radius $r(B)$
that intersects $\supp(\mu)$,
for $1\leq p <\infty$, we define
\begin{equation}\label{eqcbl}
\alpha_p(B) = \frac1{r(B)^{1+\frac np}}\,\inf_{L} \,W_p\bigl(\vphi_B\mu,\,
c_{B,L}\vphi_B\HH^n_{L}\bigr),
\end{equation}
where the infimum is taken over all $n$-dimensional affine planes that intersect $B$ and the constant $c_{B,L}$
is chosen so that the measures $\vphi_B\mu$ and $c_{B,L}\vphi_B\HH^n_{L}$ have the same mass. That is,
$
c_{B,L} =  \int \vphi_B\,d\mu/ \int \vphi_B\,d\HH^n_L$. If $B=B(x,r)$, we will also use the notation
$\alpha_p(x,r)=\alpha_p(B)$.

Since $\int\vphi_B\,d\mu\approx r(B)^n$ for any ball $B$ intersecting the support of an $n$-dimensional AD-regular
measure $\mu$, it turns out that
\begin{align}\label{eqdf541}
\alpha_p(B) &\approx \frac1{r(B)\left(\int\vphi_B\,d\mu\right)^{1/p}} \,
\inf_{L} \,W_p\bigl(\vphi_B\mu,\,c_{B,L}\vphi_B\HH^n_{L}\bigr) \\ &= 
\frac1{r(B)} \,
\inf_{L} \,W_p\left(\frac{\vphi_B\mu}{\int\vphi_B\,d\mu},\,c_{B,L}\vphi_B\HH^n_{L}\right)= \frac1{r(B)} \,
\inf_{L} \,W_p\left(\frac{\vphi_B\mu}{\int\vphi_B\,d\mu},\,\frac{\vphi_B\HH^n_{L}}{\int\vphi_B\,d\HH^n_{L}}\right).\nonumber
\end{align}
Observe that the coefficient $\alpha_p(B)$ measures in a scale invariant way how close
$\mu$ is to a flat $n$-dimensional measure in $3B$.
Recall that a flat $n$-dimensional measure is a measure of the form $c\,\HH^n_L$, where $c>0$ and $L$ is
an affine $n$-plane. In particular, notice that if $\mu$ is flat and $B\cap\supp(\mu)\neq\varnothing$, 
then $\alpha_p(B)=0$. Notice also that
$$\alpha_p(B)\lesssim \alpha_q(B)\qquad \text{if $p\leq q$.}$$
This follows easily from \rf{eqdf541}, taking into account that $W_p(\sigma,\nu)\leq W_q(\sigma,\nu)$ for any two probability measures $\sigma$ and $\nu$.

Let us remark that, in the case $p=1$, some coefficients quite similar to the $\alpha_1$'s were already
introduced in \cite{Tolsa-plms}, and they were denoted by $\alpha$ there. The precise relationship between the $\alpha_1$'s and the $\alpha$'s is
explained in Lemma \ref{lemalfas} below. 

The coefficients $\alpha_p$ should also be compared with the coefficients $\beta_p$, well known in the 
area of quantitative rectifiability.
Given $1\leq p < \infty$ and a ball $B\subset\R^d$, one sets
$$\beta_p(B) = \inf_L \,
\biggl\{ \frac1{r(B)^n}\int_{2B} \biggl(\frac{\dist(y,L)}{r(B)}\biggr)^pd\mu(y)\biggr\}^{1/p},$$
where the infimum is taken over all $n$-planes in $\R^d$.
For $p=\infty$ one has to replace the $L^p$ norm by a supremum:
$$\beta_\infty(B) = \inf_L\, \biggl\{ \sup_{y\in \supp(\mu)\cap 2B}
\frac{\dist(y,L)}{r(B)}\biggr\},$$
where the infimum is taken over all $n$-planes $L$ in $\R^{d}$ again.
The coefficients $\beta_p$ first appeared in \cite{Jones-Escorial} and \cite{Jones-traveling}, in the case $n=1$, 
$p=\infty$. In
 \cite{Jones-Escorial} P.~Jones showed, among other results, how the $\beta_\infty$'s can be used to prove the $L^2$
boundedness of the Cauchy transform on Lipschitz graphs. In \cite{Jones-traveling}, he characterized $1$-dimensional 
uniformly rectifiable sets in terms of the~$\beta_\infty$'s. He also obtained other quantitative results on 
rectifiability without the AD regularity assumption. For other $p$'s and $n\geq1$, the $\beta_p$'s were introduced
by David and Semmes in their pioneering study of uniform rectifiability in \cite{DS1}.

Notice that the $\beta_p$'s only give information on how close
$\supp(\mu)\cap 2B$ is to some $n$-plane (more precisely, how close is $\supp(\mu)\cap 2B$ to be contained in some $n$-plane). 
On the other hand, the coefficients $\alpha_p$ contain more precise information. 
Indeed, we will see in Lemma \ref{lembeta2} below that $\beta_p(B)\leq\alpha_p(B)$ for $1<p<\infty$.
It is immediate to check that the converse inequality fails, as $\beta_p(B)=0$ does not
force $\mu$ to be flat in~$2B$.

If $B,B'$ are two balls with comparable radii and $B\subset B'$, then it is straightforward to 
check that $\beta_p(B)\lesssim \beta_p(B')$. However, this property is very far from being clear for
the coefficients $\alpha_p$. So given an AD-regular measure $\mu$, 
suppose that $3B\subset B'$ and that $\mu(B)\approx\mu(B')\approx r(B)^n\approx r(B')^n$. Is it true that 
\begin{equation}\label{eqbbp}
\alpha_p(B)\lesssim\alpha_p(B')\,?
\end{equation}

This question leads naturally to the following.
Let $\mu,\nu$ be probability measures 
on $\R^d$ with finite $p$-th moments and $\vphi$ a non negative bump function 
smooth enough. Consider the measures $\vphi\mu$ and $\vphi\nu$. We wish to estimate how 
different they are in terms of the distance $W_p(\mu,\nu)$. More precisely, let $a=\int\vphi\,d\mu/
\int\vphi\,d\nu$ and assume that the mass of the $\vphi\mu$ is quite big, that is, 
$\int\vphi\,d\mu\geq c_1\approx1.$ Is it true that
$$W_p(\vphi\mu,\,a\vphi\nu)\leq c\,W_p(\mu,\nu)?,$$
with $c$ depending on $c_1$. Using Kantorovich's duality, it is easy to check that  this holds for $p=1$,
 assuming only $\vphi$ to be Lipschitz (see Subsection \ref{secw1}). 
We will show in this paper that this is also true for the quadratic
Wasserstein distance, assuming additional conditions on $\vphi$ and $\mu$. 
The precise result, which we will use to prove that \rf{eqbbp} holds for $p=2$,
is the following.

\begin{theorem}\label{teobola}
 Let $\mu,\nu$ be probability measures on $\R^n$ with finite second moments. Let $B\subset\R^n$ be a closed ball 
with radius $r(B)\geq c_3^{-1}$, 
 and suppose that
 $\mu\bigl(\interior B\bigr),\nu\bigl(\interior B\bigr)>0$ (where $\interior B$ stands for the interior of $B$). Assume that $\mu$ is
absolutely continuous with
 respect to the Lebesgue measure on $B$, so that $\mu\rest  B = f(x)\,dx$, where the density $f$ satisfies $c_4^{-1} \chi_{B}\leq f(x)\leq c_4\chi_B$,
for a.e.\ $x\in B$
 and for some constant $c_4>0$.
Let $\vphi:\R^n\to[0,1]$ be a radial Lipschitz function supported on $B$  such that
\begin{equation}\label{eqfi000}
\,c_2^{-1}\, \dist(x,\partial B)^2\leq\vphi(x)\leq c_2\, \dist(x,\partial B)^2\qquad\!\text{and}\!
\qquad|\nabla \vphi(x)|\leq c_2 \dist(x,\partial B),
\end{equation}
for all $x\in B$. 
 Then
$$W_2(\vphi\mu,\,a\,\vphi\nu)\leq c\, W_2(\mu,\nu),$$
where $a=\int\vphi\,d\mu/\int\vphi\,d\nu$ and $c$ depends only on $c_2$,$c_3$
and $c_4$.
\end{theorem}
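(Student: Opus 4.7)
I first note that the hypotheses force $r(B)\asymp 1$: since $\mu$ is a probability measure with density $\geq c_4^{-1}$ on $B$, we have $c_4^{-1}|B| \leq \mu(B) \leq 1$, so $r(B)$ is bounded above by a constant depending on $c_4$; combined with $r(B) \geq c_3^{-1}$, this bounds $r(B)$ both above and below. A similar computation gives $\int\vphi\,d\mu \geq c>0$. If $W_2(\mu,\nu)$ exceeds a fixed threshold, both sides of the desired inequality are $O(1)$ and the conclusion is trivial, so I may assume $W_2(\mu,\nu)$ is small. The growth hypotheses on $\vphi$ have two crucial consequences. First, $\sqrt{\vphi}$ is globally Lipschitz, since $|\nabla\sqrt{\vphi}| = |\nabla\vphi|/(2\sqrt{\vphi}) \leq C$ by \eqref{eqfi000}, which yields
$$|\vphi(x) - \vphi(y)| \leq C\,|x-y|\bigl(\sqrt{\vphi(x)} + \sqrt{\vphi(y)}\bigr).$$
Second, $\vphi$ itself is Lipschitz with constant $\leq C$, so by Kantorovich-Rubinstein, $|1-a|\int\vphi\,d\nu \leq \lip(\vphi)\,W_1(\mu,\nu) \leq C\,W_2(\mu,\nu)$.

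\textbf{Constructing the transport plan.} Let $\pi$ be the optimal plan realizing $W_2(\mu,\nu)$. My plan is to construct a transport plan $\Pi$ from $\vphi\mu$ to $a\vphi\nu$ as a sum $\Pi = \Pi_0 + \Pi_1$, where the ``matched'' part is
$$d\Pi_0(x,y) := \min\bigl(\vphi(x),\, a\vphi(y)\bigr)\,d\pi(x,y).$$
Then $\Pi_0$ has cost at most $\|\vphi\|_\infty\int|x-y|^2\,d\pi \leq W_2(\mu,\nu)^2$, first marginal $\leq \vphi\mu$, and second marginal $\leq a\vphi\nu$. Denote the deficits $\alpha_\mu := \vphi\mu - \mathrm{marg}_1(\Pi_0)$ and $\alpha_\nu := a\vphi\nu - \mathrm{marg}_2(\Pi_0)$, both non-negative and of equal total mass $M$. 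Applying the Lipschitz estimate for $\vphi$ together with Cauchy-Schwarz,
$$M \leq \int |\vphi(x) - a\vphi(y)|\,d\pi \leq C\,\sqrt{W_2(\mu,\nu)^2\cdot 2(\textstyle\int\vphi\,d\mu + \int\vphi\,d\nu)} + |1-a|\textstyle\int\vphi\,d\nu \leq C\,W_2(\mu,\nu).$$
The remaining task is to construct $\Pi_1$ transporting $\alpha_\mu$ to $\alpha_\nu$ with $\int|x-y|^2\,d\Pi_1 \leq C\,W_2(\mu,\nu)^2$, equivalently $W_2(\alpha_\mu,\alpha_\nu)^2 \leq C\,W_2(\mu,\nu)^2$.

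\textbf{The main obstacle.} This last step is the heart of the proof and the principal technical difficulty. Since $M \leq CW_2(\mu,\nu)$ and $\diam(B) \leq C$, the naive diameter estimate yields only $W_2(\alpha_\mu,\alpha_\nu)^2 \leq CM \leq CW_2(\mu,\nu)$, a factor of $W_2(\mu,\nu)$ worse than what is needed. The key to overcoming this is the density lower bound $f \geq c_4^{-1}$ of $\mu$ on $B$, which supplies a ``reservoir'' of $\mu$-mass available for local rearrangements. The residuals $\alpha_\mu$ and $\alpha_\nu$ are not arbitrary: they arise from pairs $(x,y)\in\supp(\pi)$, and for each such pair the excess/deficit is controlled by the oscillation of $\vphi$ at scale $|x-y|$. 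I would stratify $\pi$ by scales $|x-y| \approx 2^{-k}$ and, at each scale, reroute the residual through the ambient $\mu$-mass over distances comparable to $2^{-k}$; globally, the average squared rerouting distance should then be comparable to the average $\pi$-transport distance, giving the extra factor of $W_2(\mu,\nu)$ missing from the trivial estimate. The quadratic vanishing $\vphi \approx \dist(\cdot,\partial B)^2$ together with $|\nabla\vphi|\lesssim \dist(\cdot,\partial B)$ ensures that near $\partial B$ the residual mass also vanishes compatibly with the density budget, so the boundary layer does not produce extra cost. The careful multi-scale bookkeeping of this rerouting---matching residuals to the $\mu$-reservoir while respecting the vanishing of $\vphi$ at $\partial B$---is where I expect the proof to be most technical.
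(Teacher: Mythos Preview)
Your setup and reductions are correct, and you have correctly isolated the crux: bounding $W_2(\alpha_\mu,\alpha_\nu)^2$ by $C\,W_2(\mu,\nu)^2$ rather than the trivial $C\,W_2(\mu,\nu)$. But the proposal stops exactly there. The paragraph headed ``The main obstacle'' is a description of the difficulty and a heuristic for how it might be overcome, not an argument; you yourself say this is ``where I expect the proof to be most technical.'' So as written there is a genuine gap: the proof is missing precisely its main step.

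There is also a structural concern with the sketch you give for that step. Your residuals live on different sides: $\alpha_\mu$ is a measure on the $x$-variable and $\alpha_\nu$ on the $y$-variable, while the density lower bound $f\ge c_4^{-1}$ is available only for $\mu$. ``Rerouting through the $\mu$-reservoir'' makes sense for $\alpha_\mu$, but you have no analogous reservoir for $\nu$, and you have not explained how to bring $\alpha_\nu$ back to the $\mu$-side in a way that does not reintroduce the lost factor. The paper avoids this asymmetry by first pushing everything to the $\mu$-side via the optimal \emph{map} $T$ (after reducing to absolutely continuous measures): it compares $\vphi\mu$ not with $a\vphi\nu$ directly but with $a\,T\#(\chi_G\,\vphi\nu)$, where $G$ is a ``good'' set on which $|x-Tx|$ is small relative to the local Whitney scale. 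The comparison then lives entirely inside $R$ where the density bounds on $\mu$ apply.

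For the record, the paper's route is quite different in its implementation from what you outline. It first transfers the problem from the ball $B$ to a cube $R$ via a bilipschitz change of variables (this is where radiality of $\vphi$ is used), and then proves the cube version by a multiscale Haar-wavelet decomposition: a ``key lemma'' bounds $W_2(\sigma,\tau)^2$ for $\sigma=\vphi\mu$ and a suitable $\tau$ by sums of $\|\Delta_Q(\sigma-\tau)\|_{L^2}^2$ over a tree of dyadic subcubes, plus a stopping-time term. The building block for that lemma is a local transport estimate (proved via the Knothe--Rosenblatt coupling) showing that a bounded mean-zero perturbation $h$ on a cube $Q$ can be absorbed at cost $\lesssim\ell(Q)\|h\|_\infty$; this is where the precise quadratic decay of $\vphi$ and the density bounds enter. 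Your instinct that a scale-by-scale argument is needed is right, but the actual machinery is substantially heavier than your sketch suggests.
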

\vvv

Notice that the assumptions on $\mu$ and $\vphi$ imply that $r(B)\approx1$ and $\int\vphi\,d\mu\gtrsim1$,
with constants depending on $c_2$, $c_3$, and $c_4$.

The result also holds if, instead of \rf{eqfi000}, one asks
$$\vphi(x)\approx  \dist(x,\partial B)^m \quad \text{ and }
\qquad|\nabla \vphi(x)|\lesssim \dist(x,\partial B)^{m-1},
$$
for $x\in B$ and $m\geq2$. However, for simplicity we have only considered the case $m=2$.

The preceding theorem follows from an analogous result where the ball $B$ is 
 replaced by a cube $R$. We carry out this reduction by a suitable change of coordinates in Subsection
 \ref{seccube}. 
 Roughly speaking, the proof of the corresponding theorem for the cube $R$ consists in 
 estimating $W_2(\vphi\mu,a\,T\#(\vphi\nu))$ in terms of $W_2(\mu,\nu)$, where $T$ is a map such that $T\#\nu=\mu$ which realizes the optimal quadratic transport (the notation $T\#\nu$ stands for the image measure of $\nu$ by $T$, i.e. $T\#\nu(A)=\nu(T^{-1}A)$ for $A\subset\R^n$).
This estimate is obtained by a multi-scale analysis, using Haar wavelets. Some of the
ideas are partially inspired by \cite{Tolsa-plms}.

Roughly speaking, the 
quadratic decay of $\vphi$ as one approaches the boundary is used to bound the interchanges of mass between
$\vphi\mu$ and $(1-\vphi)\nu$, and between $\vphi\nu$ and $(1-\vphi)\mu$.

With the preceding localization result for $W_2$ at hand, we will prove the following.

\begin{theorem}\label{teorectif}
Let $\mu$ be an AD-regular measure on $\R^d$ and $1\leq p\leq2$. Then $\mu$ is uniformly rectifiable if and only if $\alpha_p(x,r)^2\,d\mu(x)\,\frac{dr}r$ is a Carleson measure, that is, if for any ball $B$ 
centered on $\supp(\mu)$ with radius $R$,
\begin{equation}\label{eqcar53}
\int_0^R\!\!\!\int_{B}
 \alpha_p(x,r)^2\,d\mu(x)\,\frac{dr}r \leq c\,R^n.
\end{equation}
\end{theorem}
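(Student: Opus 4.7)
The plan is first to use the elementary monotonicity $\alpha_p(B) \lesssim \alpha_q(B)$ for $p \le q$ (immediate from \eqref{eqdf541} and $W_p \le W_q$) to reduce the two directions of the theorem to extreme cases. For the ``if'' direction, from Carleson for $\alpha_p$ with $p>1$ one obtains Carleson for $\beta_p$ via Lemma~\ref{lembeta2} and then uniform rectifiability from the David--Semmes characterization in \cite{DS2}; the case $p=1$ reduces either to the $p=2$ case by monotonicity, or to the $\alpha$-characterization of \cite{Tolsa-plms} via Lemma~\ref{lemalfas}. So the substantive content is the ``only if'' direction for $p=2$: assuming $\mu$ is uniformly rectifiable, prove \eqref{eqcar53}.

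For this I would use the David--Semmes corona decomposition: inside any fixed ball $B_0$ centered on $\supp(\mu)$, the dyadic lattice of $\supp(\mu)$ splits into trees $\{\tree(Q)\}_{Q \in \ttt}$ whose tops satisfy the packing $\sum_{Q \in \ttt,\,Q \subset Q_0} \mu(Q) \lesssim \mu(Q_0)$, and such that on each tree $\mu$ stays close to a Lipschitz graph $\Gamma_Q$ of a function $A_Q : L_Q \to L_Q^\perp$ over an $n$-plane $L_Q$ with small Lipschitz constant. The packing then reduces \eqref{eqcar53} to the per-tree estimate
\begin{equation*}
\sum_{P \in \tree(Q)} \alpha_2(B_P)^2 \, \mu(P) \lesssim \mu(Q),
\end{equation*}
where $B_P$ is a ball of radius $\approx \ell(P)$ centered at $P$.

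For a fixed $P \in \tree(Q)$, I would control $\alpha_2(B_P)$ by the triangle inequality using the graph $\Gamma_Q$ as an intermediate: (i) $W_2(\vphi_{B_P}\mu,\, c_1 \vphi_{B_P}\HH^n_{\Gamma_Q})$ captures the deviation of $\mu$ from $\Gamma_Q$ at scale $B_P$, and (ii) $W_2(c_1 \vphi_{B_P}\HH^n_{\Gamma_Q},\, c_2 \vphi_{B_P}\HH^n_{L_P})$ captures the tilt of the graph relative to a suitable $n$-plane $L_P$. For (ii) I would parametrize $\Gamma_Q$ as the graph of $A_Q$ over $L_Q \cong \R^n$ and push both measures down to the base plane; then Theorem~\ref{teobola} applies, because the parametrizing measure on $L_Q$ has bounded density and $\vphi_{B_P}$ has the quadratic boundary vanishing \eqref{eqfi00}, so the weighted $W_2$ is dominated by the un-localized $W_2$, which is in turn comparable to the $L^2$ oscillation of $A_Q$ minus an affine function over the base of $B_P$. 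Summing over the tree yields a Jones-type square function of $A_Q$, controlled by $\|\nabla A_Q\|_{L^2}^2 \lesssim \mu(Q)$ since $A_Q$ has small Lipschitz norm on a set of $\mu$-measure $\lesssim \mu(Q)$. For (i), projecting $\mu$ onto $\Gamma_Q$ (which is $\eta$-close) and running a stopping argument on cubes where the graph approximation degrades gives the same $O(\mu(Q))$ bound. The scale monotonicity \eqref{eqbbp}, itself a consequence of Theorem~\ref{teobola}, is used throughout to move between comparable-scale balls without loss.

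The main obstacle is step (ii): the weighted cost $W_2(\vphi_{B_P}\HH^n_{\Gamma_Q},\, c\vphi_{B_P}\HH^n_{L_P})$ has no clean a priori comparison with the $L^2$ deviation of $A_Q$ from an affine function, because optimal transport plans for the cut-off measures can redistribute mass near the effective boundary of $\vphi_{B_P}$ in ways that depend sensitively on how the cutoff tapers. The quadratic boundary vanishing imposed on $\vphi$ in \eqref{eqfi00} is precisely what rules out such catastrophic boundary contributions, and Theorem~\ref{teobola} is what makes this rigorous. The remaining ingredients---the parametrization of the graph, the choice of $L_P$ at each scale, and the passage between the various formulations of $\alpha_2$ in \eqref{eqdf541}---are technical bookkeeping; all the Wasserstein input enters through the localization.
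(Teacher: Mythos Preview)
Your architecture is correct and matches the paper's: reduce to $p=2$, use the corona decomposition to reduce to a per-tree estimate, and on each tree approximate $\mu$ by a measure on the Lipschitz graph $\Gamma_\TT$. Step (i) is essentially what the paper does (construct an approximating measure $\sigma$ on $\Gamma_\TT$ and compare, with stopping-cube errors; see Lemma~\ref{lemaux77} and the subsequent discussion).

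The gap is in step (ii), and specifically in the sentence ``all the Wasserstein input enters through the localization''. After projecting to an $n$-plane and pushing down to the base you are left with a comparison of the form $W_2(\chi_Q\, g_0\,\LL^n,\, a\,\chi_Q\,\LL^n)$ (or its $\vphi$-weighted analogue), where $g_0\approx 1$ is a bounded density coming from both the Jacobian of the graph parametrization and the density of $\mu$. Theorem~\ref{teobola} only says that a smooth cutoff does not increase $W_2$; it does not tell you how to estimate the $W_2$ distance between two measures on a cube with comparable but different densities in a way that sums over scales. For this the paper uses an independent multiscale estimate, the Key Lemma~\ref{lemkey} (in the simplified form of Remark~\ref{remkey}):
\[
W_2(\sigma,\tau)^2 \lesssim \sum_{P\in\TT}\|\Delta_P\tau\|_{L^2}^2\,\ell(P)^{2-\alpha}\ell(R)^\alpha + \sum_{P\in\SSS}\ell(P)^2\tau(P).
\]
This Haar-type bound, not Theorem~\ref{teobola}, is what converts the density comparison into something summable; see Lemma~\ref{lem53}, which yields
\[
\wt\alpha_2(Q)^2 \lesssim \wt\beta_2(Q)^2 + \sum_{P\in\TT:P\subset Q}\|\Delta_P^{\sigma_\Gamma}g\|_{L^2(\sigma_\Gamma)}^2\,\frac{\ell(P)}{\ell(Q)^{n+1}} + \text{stopping}.
\]
The first term is the Jones $\beta_2$ square function of the graph; the second is the Littlewood--Paley square function of the density $g$; both are Carleson separately. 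Your sketch collapses these two contributions into a single ``$L^2$ oscillation of $A_Q$ minus an affine function'' and attributes the whole estimate to Theorem~\ref{teobola}, which is not enough. In the paper Theorem~\ref{teobola} enters only through the scale-comparison Lemmas~\ref{lemrestric01}, \ref{lemrestric1}, and~\ref{lemrestric2}.
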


Some remarks are in order. First, let us mention that the same result is already known to hold with
$\beta_p$ instead of $\alpha_p$ (even with a somewhat wider range of $p$'s). This was
shown by David and Semmes in \cite{DS1}. Also, the case $p=1$ of the above theorem is a straightforward consequence
of an analogous result proved for the coefficients $\alpha$ in \cite{Tolsa-plms}, using the 
relationship between the $\alpha$'s and the $\alpha_1$'s given in Lemma 
\ref{lemalfas}.

Notice that if, for some $p\in[1,2]$, $\alpha_p(x,r)^2\,d\mu(x)\,\frac{dr}r$ is a Carleson measure, then so 
is $\beta_p(x,r)^2\,d\mu(x)\,\frac{dr}r$, since $\beta_p(x,r)\leq\alpha_p(x,r)$, and thus by the results of David and Semmes in \cite{DS1}
$\mu$ is uniformly rectifiable. So the difficult implication in the theorem consists in showing that
if $\mu$ is uniformly rectifiable, then $\alpha_p(x,r)^2\,d\mu(x)\,\frac{dr}r$ is a Carleson measure.
As $\alpha_p(x,r)\lesssim\alpha_2(x,r)$ for $p\leq 2$, it suffices to consider the case $p=2$ for 
this implication. To this end, the localization Theorem \ref{teobola} will play a key role.
First we will prove that $\alpha_2(x,r)^2\,d\mu(x)\,\frac{dr}r$ is a Carleson measure in the particular
case where $\mu$ is comparable to $\HH^n$ on an $n$-dimensional Lipschitz graph, and finally
we will prove the result in full generality by means of a geometric corona type decomposition.
This technique, which takes its name from the corona theorem of Carleson, has been adapted by David
and Semmes to the analysis of uniformly rectifiable sets \cite{DS1,DS2} and has already been shown to be 
useful in a variety of situations (see also, for example, \cite{Leger}, \cite{Tolsa-bilip}, or 
\cite{MT}).

The plan of the paper is the following. In Section \ref{secprelim} we introduce
some additional notation and terminology. The localization Theorem \ref{teobola} is proved in
Sections \ref{seclocal} and \ref{secappendix}.
In Section \ref{sec5} we explain the relationship among the coefficients $\alpha,\alpha_p,\beta_p$
and we show that $\alpha_2(B)\lesssim\alpha_2(B')$, under the assumptions just above \rf{eqbbp}.
In Section \ref{sec6} we prove Theorem \ref{teorectif} for the particular case of Lipschitz graphs,
while the full result is proved in the final Section \ref{sec7}.

\vvv

\vvv


\section{Preliminaries}\label{secprelim}

As usual, in the paper the letter `$c$' stands for an absolute
constant which may change its value at different occurrences. On
the other hand, constants with subscripts, such as $c_1$, retain
their value at different occurrences. The notation $A\lesssim B$
means that there is a positive absolute constant $c$ such that
$A\leq cB$. So $A\approx B$ is equivalent to $A\lesssim B \lesssim
A$. 

Given $x\in\R^n$, $|x|$ stands for its Euclidean norm and $|x|_\infty$ for its $\ell_\infty$ norm.

By a cube in $\R^n$ we mean a cube with edges parallel to the axes. Most of the cubes in our paper will be
dyadic cubes, which are assumed to be half open-closed. The collection of all dyadic cubes is denoted by $\DD$.
The side length of a cube $Q$ is written as $\ell(Q)$, and its center as $z_Q$. 
The lattice of dyadic cubes of side length $2^{-j}$ is
denoted by $\DD_j$. On the other hand, if $R$ is a cube, $\DD(R)$ stands for the collection of cubes contained
in $R$ that are obtained by splitting it dyadically, and $\DD_j(R)$ is the subfamily of those cubes from
$\DD(R)$ with side length $2^{-j}\ell(R)$. On the other hand, if $Q\in\DD$ or $Q\in\DD(R)$, then
$\CH(Q)$ is the family of dyadic children of $Q$. One says that $Q$ is the parent of its children. Two cubes
$Q,Q'\in\DD$ are called brothers if they have the same parent.
Also, given $a>0$ and any cube $Q$, we denote by $a\,Q$ the cube concentric with $Q$ with side length $a\,
\ell(Q)$. 

By a measure on $\R^n$, we mean a Radon measure. Its total variation is denoted by $\|\mu\|$.
Given a set $A\subset \R^n$, we write $\mu\rest A=\chi_A\,\mu$. That is $\mu\rest A$ is the 
restriction of $\mu$ to $A$.
The Lebesgue measure is denoted by $m$ or by $dx$. 
Given a measure $\mu$ and a cube $Q$, we set $m_Q\mu =\mu(Q)/m(Q)$. That is, $m_Q\mu$ is the mean of $\mu$ on $Q$, with respect to Lebesgue measure. $L^p$ is the usual $L^p$ space with respect to Lebesgue measure, while $L^p(\mu)$ is the one with respect to the measure $\mu$. As usual, we consider the pairing between two functions  $\langle f,\,g\rangle=\int f\,g\,dx$, or between
a function and a measure $\langle f,\,\mu\rangle=\int f\,d\mu$.

Given $A\subset\R^n$, we say that a measure $\mu$ on $\R^n$ is doubling on $A$ if there
exists a constant $c_d$ such that
$$\mu(B(x,2r))\leq c_d\,\mu(B(x,r))\qquad \text{for all $x\in A\cap\supp(\mu)$, $0<r\leq\diam(A).$}$$
In the case $A=\R^n$, we just simply say that $\mu$ is doubling.

The Hausdorff $s$-dimensional measure in $\R^n$ is denoted by $\HH^s$. Given a set $E\subset\R^n$,
we write $\HH^s_E=\HH^s\rest E$.

Concerning mass transport, recall that, by Brenier's theorem (see \cite[Chapter 2]{Villani-ams}, for 
example), if $\mu$ and $\nu$ are probability measures
which are absolutely continuous with respect to Lebesgue measure in $\R^n$, then there exists a unique
optimal transference plan (i.e. an optimal measure) $\pi$ for $W_2$, and there are maps $S,T:\R^n\to\R^n$ such that 
$\pi = (Id \times S)\#\mu = (T\times Id)\#\nu.$
So $S\#\mu=\nu$ and $T\#\nu=\mu$, and
$$W_2(\mu,\nu)^2 = \int_{\R^n\times\R^n} |x-y|^2\,d\pi(x,y)
= \int_{\R^n} |x-Sx|^2\,d\mu(x) = \int_{\R^n} |Ty-y|^2\,d\nu(y).$$ 
Moreover, $T\circ S=Id$ $\mu$-a.e.\ and $S\circ T=Id$ $\nu$-a.e.

Let us remark that the ambient space for the proof of Theorem \ref{teobola} below will be $\R^n$, while
the ambient space for the proof of Theorem \ref{teorectif} will be $\R^d$, and in this case we will
reserve the letter $n$ for the dimension of the measure $\mu$.


\section{The localization theorem for Wasserstein distances}\label{seclocal}


\vvv

\subsection{The case of $W_1$}\label{secw1}

In this subsection we prove a localization result for $W_1$ under much weaker assumptions on $\mu$ and $\vphi$
than the ones in Theorem \ref{teobola}.

\begin{propo}\label{propo1}
Let $\mu$ and $\nu$ be probability measures on $\R^n$ with finite first moments and let 
$B$ be a ball of radius $c_3^{-1}\leq r(B)\leq c_3$. Let
$\vphi:\R^n\to[0,\infty)$ be a 
Lipschitz function supported on $B$ with
$\|\nabla\vphi\|_\infty\leq c_6$. 
Suppose that $\int \vphi\,d\mu\geq c_7$ and $\int \vphi\,d\nu>0$.
Then,
\begin{equation}\label{eq310}
W_1(\vphi\mu,\,a\,\vphi\nu)\leq c\, W_1(\mu,\nu),
\end{equation}
where $a= \int \vphi\,d\mu/\int \vphi\,d\nu$, and $c$ depends on $c_3,c_6,c_7$.
\end{propo}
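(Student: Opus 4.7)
The plan is to reduce the estimate to the Kantorovich dual formulation \eqref{eqw1}. Since $\vphi\mu$ and $a\vphi\nu$ have the same total mass by definition of $a$, it suffices to bound
$$I(f)\,:=\,\int f\vphi\,d\mu \,-\, a\int f\vphi\,d\nu$$
uniformly over $1$-Lipschitz functions $f\colon\R^n\to\R$ and then take a supremum.

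The first step is to normalize $f$. Because $\int\vphi\,d\mu=a\int\vphi\,d\nu$, the value $I(f)$ is invariant under adding a constant to $f$, so I may replace $f$ by $f-f(x_B)$, where $x_B$ is the center of $B$. After this shift, $|f(x)|\leq|x-x_B|\leq r(B)\leq c_3$ for every $x\in B$, giving an $L^\infty$-bound on $f$ on $\supp\vphi$ depending only on $c_3$.

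The main step is the splitting
$$I(f) \,=\, \int f\vphi\,d(\mu-\nu) \,+\, (1-a)\int f\vphi\,d\nu.$$
For the first term, the product $f\vphi$ is Lipschitz with constant $\lesssim c_3 c_6$: since $\vphi$ vanishes outside $B$ and is $c_6$-Lipschitz, $\|\vphi\|_\infty \leq c_6\,r(B) \lesssim c_3 c_6$, and combining with the bound on $|f|$ yields $\operatorname{Lip}(f\vphi) \leq \|\vphi\|_\infty \operatorname{Lip}(f)+ \|f\|_{\infty,B}\operatorname{Lip}(\vphi)\lesssim c_3 c_6$. Applying \eqref{eqw1} to $f\vphi$ (rescaled) bounds this term by a multiple of $W_1(\mu,\nu)$. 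For the second term, the identity $(1-a)\int\vphi\,d\nu = \int\vphi\,d\nu - \int\vphi\,d\mu$ together with the $c_6$-Lipschitz regularity of $\vphi$ gives
$$|1-a|\int\vphi\,d\nu \,=\, \Bigl|\int\vphi\,d(\mu-\nu)\Bigr| \,\leq\, c_6\,W_1(\mu,\nu),$$
and the pointwise bound $|f\vphi|\leq c_3\,\vphi$ then yields $\bigl|(1-a)\int f\vphi\,d\nu\bigr| \lesssim c_3 c_6\,W_1(\mu,\nu)$.

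There is no genuine obstacle: the argument is essentially careful bookkeeping with Kantorovich duality, exploiting that the equal-mass constraint on $\vphi\mu$ and $a\vphi\nu$ lets me center $f$ at no cost. Note also that $c_7$ does not in fact enter the final constant; it only guarantees that $a$ is a meaningful quantity (so that $\vphi\mu$ is not the zero measure). The real work, where the quadratic vanishing of $\vphi$ on $\partial B$ and the density assumption on $\mu$ are genuinely needed, is deferred to the $W_2$ localization in Theorem \ref{teobola}.
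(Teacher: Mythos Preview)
Your proof is correct and follows essentially the same route as the paper: Kantorovich duality, normalizing the test function to vanish at a point of $B$, and the splitting
\[
\int f\vphi\,d\mu - a\int f\vphi\,d\nu \;=\; \int f\vphi\,d(\mu-\nu) \;+\; (1-a)\int f\vphi\,d\nu.
\]
Your treatment of the second term is in fact slightly cleaner than the paper's. The paper first assumes $W_1(\mu,\nu)$ is small (handling the complementary case trivially), then uses $c_7$ to bound $\int\vphi\,d\nu$ from below and hence control $|1-a|$ separately. You instead keep the product $|1-a|\int\vphi\,d\nu = \bigl|\int\vphi\,d(\mu-\nu)\bigr|$ intact and bound it directly by $c_6\,W_1(\mu,\nu)$, which sidesteps both the case split and any lower bound on $\int\vphi\,d\nu$. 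Your remark that $c_7$ does not enter the final constant is therefore correct for your argument; in the paper's version it does, via the estimate $|a-1|\leq c\,W_1(\mu,\nu)$ recorded in Remark~\ref{remaaa}, which is used later in the $W_2$ analysis.
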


\vvv
\begin{proof}
We may assume that $W_1(\mu,\nu)$ is small enough.
Otherwise, the inequality \rf{eq310} is trivial (for $c$ big enough) because 
$\vphi\,\mu$ and $\vphi\,\nu$ are both supported on $B$ and $r(B)\leq c_3$. Now, notice that by the Kantorovich duality,
\begin{equation}\label{eq311}
\Bigl|\int\vphi\,d\mu - \int\vphi\,d\nu\Bigr| \leq \|\nabla\vphi\|_\infty\,W_1(\mu,\nu).
\end{equation}
Since $a=\int\vphi\,d\mu/\int\vphi\,d\nu$, we have
$$|a-1|= \frac{\Bigl|\int\vphi\,d\mu - \int\vphi\,d\nu\Bigr|}{\int\vphi\,d\nu}\leq
\frac{\|\nabla\vphi\|_\infty\,W_1(\mu,\nu)
}{\int\vphi\,d\nu}.$$
To estimate
$\int\vphi\,d\nu$ from below, we use \rf{eq311} again, and then we obtain
\begin{equation}\label{eqaaa***}
\int\vphi\,d\nu \geq \int\vphi\,d\mu - \|\nabla\vphi\|_\infty\,W_1(\mu,\nu).
\end{equation}
Since  $\int\vphi\,d\mu\geq c_7$, if $W_1(\mu,\nu)$ is small enough, we get 
$\int\vphi\,d\nu \geq  c_7/2.$ Therefore,
\begin{equation}\label{eqaaa}
|a-1|\leq c\,W_1(\mu,\nu),
\end{equation}
with $c$ depending also on $\|\nabla\vphi\|_\infty$. Let $\psi$ be an arbitrary $1$-Lipchitz function
such that $\psi(0)=0$.
By Kantorovich's duality again,
\begin{align*}
\Bigl|\int\psi\,\vphi\,d\mu - \int\psi\,a\,\vphi\,d\nu\Bigr| &\leq 
\Bigl|\int\psi\,\vphi\,d\mu - \int\psi\,\vphi\,d\nu\Bigr|+
|1-a|\int\psi\,\vphi\,d\nu\\
& \leq \|\nabla (\psi\vphi)  \|_\infty\,W_1(\mu,\nu) +c\int|\psi\,\vphi|\,d\nu\,\,W_1(\mu,\nu).
\end{align*}
By the mean value theorem, it is easy to check that $\vphi$ and $|\psi|$ are bounded uniformly above
on $B$ by some constant depending on $c_3$ and $c_6$. Then it follows that
$\|\nabla (\psi\vphi)  \|_\infty +\int|\psi\,\vphi|\,d\nu\lesssim1$, and we deduce \rf{eq310}.
\end{proof}

\vvv

\begin{rem}\label{remaaa}
Notice that \rf{eq311}, \rf{eqaaa***}, \rf{eqaaa} tell us that
$$\int\vphi\,d\nu\approx1 \qquad\mbox{and}\qquad |a-1|\leq c\,W_1(\mu,\nu)\leq c\,W_2(\mu,\nu),$$
assuming $W_1(\mu,\nu)$ small enough.
Clearly, this also holds under the assumptions of Theorem \ref{teobola}, which are more restrictive than
the ones in the preceding proposition.
\end{rem}

\vvv


\subsection{From a ball $B$ to a cube $R$ in Theorem \ref{teobola}}\label{seccube}

\begin{definition}
Given a cube $R$ and a function $\vphi:R\to[0,1]$, we write $\vphi\in \GZ_0(R)$ if $\vphi$ is Lipschitz, 
$\vphi(x)\approx 1$ on $\frac12 R$, and there
exists a constant $c_2$ such that
$$\vphi(x)\leq c_2 \frac{\dist(x,\partial R)^2}{\ell(R)^2 }
\quad\text{ and } \quad|\nabla \vphi(x)|\leq c_2 \frac{\vphi(x)}{\dist(x,\partial R)}
\quad\text{for all $x\in R$}.$$

On the other hand, we write $\vphi\in \GZ(R)$ if 
$\vphi:R\to[0,1]$ is 
 Lipschitz  and, for all $x\in R$, satisfies
$$c_2^{-1} \frac{\dist(x,\partial R)^2}{\ell(R)^2 }\leq\vphi(x)\leq c_2 \frac{\dist(x,\partial R)^2}{\ell(R)^2 },
$$
and 
$$|\nabla \vphi(x)|\leq c_2 \frac{\dist(x,\partial R)}{\ell(R)^2},\qquad|\nabla_T \vphi(x)|\leq c_2 \frac{\dist(x,\partial R)^2}{\ell(R)^3}.
$$
 Here, $|\nabla_T \vphi(x)|= \sup_{ v} |v\cdot \nabla\vphi(x)|$, where the supremum is taken over
all unit vectors $v$ which are parallel to the face of $R$ which is closest to $x$. If the face is not unique,
then we set $\nabla_T \vphi(x)=0$.
\end{definition}

For example, if $\vphi$ is a function supported on $R$ such that $\vphi(x)\approx1$ and 
is smooth  in a neighborhood of $\frac12R$, and
$$\vphi(x) = \frac{\dist(x,\partial R)^2}{\ell(R)^2} \qquad \text{for $x\in R\setminus \frac12R$,}$$
 then $\vphi\in \GZ(R)$. To check that 
$|\nabla_T \vphi(x)|\leq c_2 \dist(x,\partial R)^2/\ell(R)^3$, take $x\in R\setminus \frac12R$ 
such that there exists
a unique face closest to $x$, and let $t$ 
be a vector parallel to this face. Then, $\dist(x+\ve \,t,\,\partial R) = 
\dist(x,\,\partial R)$ for $|\ve|$ small enough, and thus $\nabla_T \vphi(x)=0$.

\vvv

In the next subsections we will prove the following result.

\vvv
\begin{theorem}\label{teoloc}
 Let $\mu,\nu$ be probability measures on $\R^n$ with finite second moments. Let $R\subset\R^n$ be the closed cube 
 with side length $\geq c_5$,
 and suppose that
 $\mu(\interior R),\nu(\interior R)>0$. Assume that $\mu$ is
absolutely continuous with
 respect to the Lebesgue measure on $R$, so that $\mu\lefthalfcup  R = f(x)\,dx$, where the density $f$
satisfies
 $c_4^{-1} \chi_{R}\leq f(x)\leq c_4\, \chi_R$ for a.e.\ $x\in R$
 and for some constant $c_4>0$.  If $\vphi\in \GZ(R)$, then
$$W_2(\vphi\mu,\,a\,\vphi\nu)\leq c\, W_2(\mu,\nu),$$
where $a>0$ is chosen so that $\int\vphi\,d\mu=a\int\vphi\,d\nu$ and $c$ depends only on $c_2$,
$c_4$ and $c_5$.
\end{theorem}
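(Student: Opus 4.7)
The plan is to first use a Brenier map to reduce the estimate to a single Wasserstein distance $W_2(\vphi\mu,\,aT\#(\vphi\nu))$, and then to bound this by a multi-scale dyadic construction of a transport plan, in the spirit of the Haar wavelet techniques of \cite{Tolsa-plms}. The quadratic decay of $\vphi$ near $\partial R$, together with the tangential gradient bound built into the definition of $\GZ(R)$, will be crucial for controlling the boundary contributions.

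By Brenier's theorem applied to $\mu$ (which is absolutely continuous on the relevant region $R$), obtain an optimal $W_2$ map $T:\R^n\to\R^n$ with $T\#\nu=\mu$, write $S:=T^{-1}$, and recall that $W_2(\mu,\nu)^2=\int|T(y)-y|^2\,d\nu(y)$. The triangle inequality gives
$$W_2(\vphi\mu,\,a\vphi\nu)\leq W_2\bigl(\vphi\mu,\,aT\#(\vphi\nu)\bigr)+W_2\bigl(aT\#(\vphi\nu),\,a\vphi\nu\bigr).$$
The second term is bounded using the coupling $(T\times Id)\#(a\vphi\nu)$, whose squared cost is $a\int|T(y)-y|^2\vphi(y)\,d\nu(y)\leq a\,W_2(\mu,\nu)^2$. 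Since Remark \ref{remaaa} gives $a\approx 1$ (and the case $W_1(\mu,\nu)$ not small is trivial because $\diam R\approx 1$), this term is $\lesssim W_2(\mu,\nu)$. Thus the remaining task is to estimate $W_2(\vphi\mu,\,aT\#(\vphi\nu))$, where the two measures have equal total mass and, on $R$, are absolutely continuous with densities $\vphi f$ and $a(\vphi\circ S)f$ respectively.

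For this I would construct an explicit transport plan by a dyadic multi-scale rearrangement on $\DD(R)$. For each $P\in\DD(R)$ set
$$D(P)=\vphi\mu(P)-aT\#(\vphi\nu)(P).$$
At each cube $P$, redistribute mass among its children $\CH(P)$ to cancel discrepancies, at a cost per unit of transported mass bounded by $\ell(P)^2$; the contribution to $W_2^2$ at scale $P$ is thus controlled by $\ell(P)^2$ times the total mass rearranged, which in turn is governed by a Haar-type quantity involving $\{D(P')\}_{P'\in\CH(P)}$. The Haar coefficients are controlled via the map $T$: up to boundary corrections, $D(P)\approx\int_P\bigl(\vphi(x)-a\vphi(S(x))\bigr)f(x)\,dx$, and using $|\vphi(x)-\vphi(S(x))|\leq\|\nabla\vphi\|_\infty\,|x-S(x)|$ together with $|a-1|\lesssim W_2(\mu,\nu)$ from Remark \ref{remaaa}, each $D(P)^2$ is dominated by a local $L^2$ integral of $|S(x)-x|$. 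Summing the telescoping contributions over all dyadic scales then produces an upper bound of the form $c\int_R|S(x)-x|^2\,d\mu(x)=c\,W_2(\mu,\nu)^2$.

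The main obstacle is the treatment of cubes $P$ close to $\partial R$. When $S$ carries a point $x\in R$ outside $R$, the density $a\vphi(S(x))f(x)$ vanishes while $\vphi(x)f(x)$ does not, producing one-sided discrepancies that are not directly controlled by $|S(x)-x|^2$ integrals. This is precisely where the quadratic decay $\vphi(x)\approx\dist(x,\partial R)^2/\ell(R)^2$ enters: it renders the mass involved in these boundary exchanges $O(\dist(x,\partial R)^2/\ell(R)^2)$, which is summable over the many dyadic cubes meeting $\partial R$ with a total loss of $O(W_2(\mu,\nu)^2)$. Implementing this cleanly will presumably require a stopping-time argument on $\DD(R)$, halting either when the transport substantially crosses a cube's boundary or when the cube is too close to $\partial R$, and handling stopped cubes by a direct estimate exploiting the quadratic decay along with the tangential gradient bound $|\nabla_T\vphi|\lesssim\dist(\cdot,\partial R)^2/\ell(R)^3$ to absorb fluctuations of $\vphi$ along layers parallel to $\partial R$.
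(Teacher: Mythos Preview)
Your overall architecture matches the paper's: reduce via the triangle inequality to $W_2(\vphi\mu,\,a\,T\#(\text{something}))$ and then run a multi-scale dyadic construction with a stopping-time. But two of the mechanisms you sketch are either missing or misplaced, and without them the argument does not close.

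First, the paper does \emph{not} work with $aT\#(\vphi\nu)$ directly. It first restricts $\nu$ to a ``good set'' $G\subset R$ defined via the Whitney decomposition $\{Q_i\}$ of $\interior R$: $x\in G$ iff $|x-Tx|\le\ell(Q_x)$ and $|x-Tx|\le\ell(Q_{Tx})$. One shows (Lemmas \ref{lem11}, \ref{lem12}) that passing from $\vphi\nu$ to $\wt\nu=\wt a\,\chi_G\vphi\nu$ costs $\lesssim W_2(\mu,\nu)$, and then takes $\tau=a\,T\#\wt\nu$. This restriction is essential for two reasons: (i) it forces $\supp\tau\subset R$ (your $aT\#(\vphi\nu)$ can have mass outside $R$, so your claim that it has density $a(\vphi\circ S)f$ on $R$ is incomplete); (ii) it guarantees $\tau(Q)\lesssim\sigma(Q)$ for all $Q\in\DD(R)$, which is needed to even apply the multi-scale Key Lemma, and it is what makes the stopping time (stop when $\tau(Q)\le\delta\sigma(Q)$) produce a controllable family.

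Second, your estimate $|\vphi(x)-\vphi(Sx)|\le\|\nabla\vphi\|_\infty|x-Sx|$ is too crude to make the sums converge. In the Key Lemma the cost at a cube $P$ carries a weight $1/m_P\sigma\approx\ell(R)^2/\dist(P,\partial R)^2$, which blows up near $\partial R$; the paper compensates this with the \emph{Whitney-localized} gradient bound $\|\nabla\vphi\|_{\infty,3Q_i}\lesssim\ell(Q_i)/\ell(R)^2$, applicable precisely because on $G$ the points $x$ and $Tx$ lie in neighboring Whitney cubes. With the global $\|\nabla\vphi\|_\infty\approx 1/\ell(R)$ you would lose a factor of $\ell(R)/\dist(P,\partial R)$ per cube and the boundary layers would not sum.

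Finally, the place where the tangential gradient bound $|\nabla_T\vphi|\lesssim\dist(\cdot,\partial R)^2/\ell(R)^3$ actually enters is not the stopping estimates but the \emph{building block}: one needs that for every $Q\in\DD(R)$ (including those touching $\partial R$, where $\sigma=\vphi\mu$ vanishes on a face) and every $h$ with $\int_Q h\,d\sigma=0$, there is a map $U:Q\to Q$ with $U\#(\sigma\rest Q)=(1+h)\sigma\rest Q$ and $|Ux-x|\lesssim\ell(Q)\|h\|_\infty$. This is Lemma \ref{lemkr0}, proved by a delicate Knothe--Rosenblatt coupling (Section \ref{secappendix}); the tangential bound is exactly what controls the conditional densities along faces of $R$. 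Your sentence ``redistribute mass among its children at cost $\ell(P)^2$ per unit'' is precisely this lemma, and it is the hardest step in the whole proof.
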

\vvv

Let see how Theorem \ref{teobola} follows from the preceding result.

\vvv

\begin{proof}[\bf Proof of Theorem \ref{teobola} using Theorem \ref{teoloc}]
Suppose first that $R=[-1,1]^n$ and $B$ is the unit ball,
both centered at the origin. Consider the map $F:\R^n\to \R^n$ defined by 
$$F(x) = \frac{|x|}{|x|_\infty}\,x.$$
This maps balls centered at the origin of radius $r$ to concentric cubes with side length $2r$.
In particular, $F(B)=R$. 
It is easy to check that $F$ is bilipschitz. Moreover, it can be checked that its Jacobian equals
$J(F)(x) = 2|x|^n/|x|^n_\infty\approx1.$ 

Now we consider the measures $F\#\mu$ and $F\#\nu$. Notice that 
$$dF\#\mu(x) = J(F^{-1})(x) \,f(F^{-1}(x))\,dx.$$
Thus the density of $F\#\mu$ is bounded above and below in $R$. 
On the other hand, in can be checked that $\vphi\circ F^{-1}\in \GZ(R)$.
Indeed, notice that 
since $\vphi$ is radial, then $\vphi\circ F^{-1}$ is constant on the boundaries of the cubes centered at the 
origin.

Then, from Theorem \ref{teoloc} we deduce that
\begin{equation}\label{eq309}
W_2\bigl((\vphi\circ F^{-1}) F\#\mu,\,a(\vphi\circ F^{-1})F\#\nu\bigr) \lesssim W_2(F\#\mu,F\#\nu).
\end{equation}
Since $F$ if bilipschitz,
$W_2(F\#\mu,F\#\nu) \approx W_2(\mu,\nu).$
Also, since $F\#(\vphi\mu) = (\vphi\circ F^{-1})F\#\mu$ and analogously for $\nu$, we have
$$W_2\bigl((\vphi\circ F^{-1}) F\#\mu,a(\vphi\circ F^{-1})F\#\nu\bigr)\approx W_2(\vphi\mu,a\vphi\nu),$$
and so the theorem follows from \rf{eq309}.

In case $B$ is not the unit ball or $R$ is not the unit cube, we just compose $F$ with an affine map
to obtain another $\wt F$ such that $\wt F(R)=B$ and does the job.
Then, we argue with $\wt F$ instead of $F$.
\end{proof}

\vvv

\subsection{Preliminary lemmas for the proof of Theorem \ref{teoloc}}

 In next lemma we recall the properties of the so called {\em Whitney decomposition} of a proper open set.
\vv

\begin{lemma} \label{lemwhitney}
An open subset $\Omega\subsetneq\R^n$ can be decomposed as follows:
$$\Omega = \bigcup_{k=1}^{\infty} Q_k, $$
where $Q_k$ are disjoint dyadic cubes (the so called ''Whitney cubes'') such
that for some constants $r>20$ and $D_0\geq1$ the following holds,
\begin{itemize}
\item[(i)] $5Q_k \subset \Omega$.
\item[(ii)] $r Q_k \cap \Omega^{c} \neq \varnothing$.
\item[(iii)] For each cube $Q_k$, there are at most $D_0$ squares $Q_j$
such that $5Q_k \cap 5Q_j \neq \varnothing$. Moreover, for such squares $Q_k$, $Q_j$, we have 
$\frac12\ell(Q_k)\leq
\ell(Q_j)\leq 2\,\ell(Q_k)$.
\end{itemize}
\end{lemma}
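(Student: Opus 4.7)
The plan is to carry out a standard Whitney-type construction, with constants chosen so that (i)--(iii) hold with the stated $r>20$. I would fix a constant $a\geq 10$ (for definiteness $a=11$; for large ambient dimension $n$ one enlarges $a$ further as indicated below) and set
\[
\mathcal{F}=\{Q\in\DD:aQ\subset\Omega\ \text{and}\ aQ'\not\subset\Omega\},
\]
where $Q'$ denotes the dyadic parent of $Q$. Since $\Omega$ is open, every $x\in\Omega$ lies in arbitrarily small dyadic cubes $Q$ with $aQ\subset\Omega$, and since $\Omega\subsetneq\R^n$, the condition must fail for sufficiently large ancestors; hence each $x$ belongs to exactly one maximal such cube, and $\mathcal{F}$ covers $\Omega$. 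Disjointness of the cubes in $\mathcal{F}$ follows from dyadic nesting together with the short $\ell^\infty$ computation that if $Q_1\subsetneq Q_2$ were both in $\mathcal{F}$, then the parent of $Q_1$ would sit inside $Q_2$ and hence satisfy $a(\text{parent})\subset aQ_2\subset\Omega$, contradicting the maximality condition defining $\mathcal{F}$.

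Property (i) is immediate once $a\geq 5$. For (ii) I would use the elementary observation that the parent $Q'$ of $Q_k$ satisfies $Q'\subset 3Q_k$ (their centers differ by $\ell(Q_k)/2$ in $\ell^\infty$-norm), which yields $aQ'\subset (2a+1)Q_k$; since $aQ'\cap\Omega^c\neq\varnothing$ is built into the selection of $\mathcal{F}$, this gives $(2a+1)Q_k\cap\Omega^c\neq\varnothing$, so (ii) holds with $r=2a+1\geq 21>20$.

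Property (iii) is the substantive part. My first step would be to establish the standard comparability
\[
\ell(Q)\approx\dist(Q,\Omega^c)\qquad\text{uniformly for }Q\in\mathcal{F}.
\]
The upper bound is a direct consequence of (ii). For the lower bound I would use $aQ\subset\Omega$: every $p\in\Omega^c$ satisfies $|p-z_Q|_\infty>a\ell(Q)/2$, so $\dist(Q,\Omega^c)\gtrsim (a-\sqrt n)\,\ell(Q)$, a genuine lower bound once $a$ is chosen past $\sqrt n$. Given this comparability, take $Q_j,Q_k\in\mathcal{F}$ with $5Q_k\cap 5Q_j\neq\varnothing$ and any $y$ in the intersection: the triangle inequality applied to $\dist(y,\Omega^c)$, bounded above by choosing a witness in $(2a+1)Q_k\cap\Omega^c$ and below by the comparability together with $y\in 5Q_j$, yields $\ell(Q_j)\lesssim\ell(Q_k)$, and symmetrically the reverse inequality. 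Once $\ell(Q_j)\approx\ell(Q_k)$, a routine volume-packing count in $\R^n$ caps the number of $Q_j$ meeting a given $5Q_k$ by a constant $D_0=D_0(n)$.

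The only genuine bookkeeping is the joint calibration of $a$: it must simultaneously satisfy $a\geq 5$ (for (i)), $a>19/2$ (for the bound $r>20$ in (ii)), and be sufficiently larger than $\sqrt n$ to secure the lower bound $\dist(Q,\Omega^c)\gtrsim\ell(Q)$ needed in (iii). All three can be arranged together (e.g.\ by taking $a=\max(11,10\sqrt n)$), and I expect no conceptual obstacle beyond this; the size-comparability of neighbouring Whitney cubes is the only step of real content, and it is standard.
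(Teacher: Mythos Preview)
Your construction is the standard Whitney decomposition and is essentially correct. The paper itself gives no proof at all: it simply cites Stein, \emph{Singular Integrals and Differentiability Properties of Functions}, pp.\ 167--169, so there is nothing in the paper to compare your argument against beyond noting that your sketch is precisely the classical construction carried out there.

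One small gap worth flagging: statement (iii) asks for the exact ratio $\tfrac12\ell(Q_k)\leq\ell(Q_j)\leq 2\ell(Q_k)$, but your argument as written only produces $\ell(Q_j)\approx\ell(Q_k)$ with an implicit constant depending on $a$ and $n$. Since dyadic side lengths are powers of $2$, comparability with a constant strictly less than $4$ automatically upgrades to the factor-$2$ bound; so to close this you would need to track the constants in your triangle-inequality step carefully enough to force the ratio below $4$. This is achievable by taking $a$ sufficiently large and is pure bookkeeping, not a conceptual obstacle.
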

\vvv

This is a well known result. See for example
\cite[pp.\ 167-169]{Stein0} for the proof.
\vv

Roughly speaking, next lemma deals with the existence of a good big subset $G\subset R$ such 
that the mass on $G$ is transported not too far. 
\vvv

\begin{lemma}\label{lem11}
Let $\mu,\nu$ be probability measures on $\R^n$ with finite second moments which are absolutely continuous with
 respect to the Lebesgue measure.  Take $T$ such that 
$T\#\nu = \mu$ and 
$$W_2(\mu,\nu)^2 = \int|Tx-x|^2\,d\nu(x).$$
 Let $R\subset\R^n$ be a cube and let 
 $\{Q_i\}_{i\in I}$ be a Whitney decomposition of $\inter{R}$ as in Lemma \ref{lemwhitney}.
For $x\in \inter R$, let $Q_x$ be the Whitney cube $Q_i$ that contains $x$. Denote
\begin{equation}\label{eqdefg12}
G= \bigl\{x\in R:\,|x-Tx|\leq \ell(Q_x)\bigr\} \cap 
\bigl\{x\in T^{-1}(R):\,|x-Tx|\leq \ell(Q_{Tx})\bigr\}.
\end{equation}
Let $\vphi\in \GZ_0(R)$ and suppose that
\begin{equation}\label{eqd422} 
W_2(\mu,\nu)^2\leq c_8 \,(\vphi\nu)(\R^n)\,\ell(R)^2,
\end{equation}
where $c_8$ is some positive constant small enough. Then $\int_G\vphi\,d\nu>0$.

 Consider the measure
$\wt \nu= \wt a\,\chi_G\,\vphi\,\nu,$
with $\wt a =(\vphi\nu)(\R^n)/(\vphi\nu)(G)$ (so that $(\vphi\nu)(\R^n)=\wt\nu(\R^n)$).
Then we have
\begin{equation}\label{eqd423a} 
\|\vphi\nu - \wt\nu\| \leq \frac{c}{\ell(R)^2}\,W_2(\mu,\nu)^2,
\end{equation}
and consequently,
\begin{equation}\label{eqd423} 
W_2(\vphi\nu,\wt\nu) \lesssim W_2(\mu,\nu).
\end{equation}
\end{lemma}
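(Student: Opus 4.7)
The plan is to quantify the $\vphi\nu$-mass of the bad set $R\setminus G$ by exploiting the quadratic decay of $\vphi\in\GZ_0(R)$ near $\partial R$ together with the Whitney property $\ell(Q_k)\approx\dist(Q_k,\partial R)$. The key pointwise bound I want to establish is
$$\vphi(x) \leq \frac{c}{\ell(R)^2}\,|x-Tx|^2\qquad\text{for every }x\in R\setminus G.$$
To prove it I split $R\setminus G$ according to the defining conditions in \rf{eqdefg12}. If $Tx\notin R$, then the segment from $x\in R$ to $Tx$ crosses $\partial R$, so $\dist(x,\partial R)\leq |x-Tx|$. If $Tx\in R$ and $|x-Tx|>\ell(Q_x)$, then Lemma~\ref{lemwhitney}(i)--(ii) give $\dist(x,\partial R)\lesssim\ell(Q_x)<|x-Tx|$. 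Finally, if $Tx\in R$ and $|x-Tx|>\ell(Q_{Tx})$, the same Whitney bound applied at $Tx$ gives $\dist(Tx,\partial R)\lesssim\ell(Q_{Tx})<|x-Tx|$, and the triangle inequality then forces $\dist(x,\partial R)\lesssim|x-Tx|$. In every case, the defining estimate $\vphi(x)\leq c_2\,\dist(x,\partial R)^2/\ell(R)^2$ closes the step.

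Integrating the pointwise bound with respect to $\nu$ and using the optimality identity $W_2(\mu,\nu)^2=\int|x-Tx|^2\,d\nu(x)$ gives
$$(\vphi\nu)(R\setminus G) \leq \frac{c}{\ell(R)^2}\,W_2(\mu,\nu)^2.$$
Plugging in the smallness hypothesis \rf{eqd422} yields $(\vphi\nu)(R\setminus G)\leq c\,c_8\,(\vphi\nu)(\R^n)$, so choosing $c_8$ small enough ensures $(\vphi\nu)(G)\geq\tfrac12(\vphi\nu)(\R^n)>0$. This proves $\int_G\vphi\,d\nu>0$ and in addition keeps $\wt a = (\vphi\nu)(\R^n)/(\vphi\nu)(G)$ close to $1$.

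For the total variation estimate \rf{eqd423a}, the decomposition $\vphi\nu-\wt\nu=\chi_{R\setminus G}\vphi\nu-(\wt a-1)\chi_G\vphi\nu$ together with the identity $\wt a - 1 = (\vphi\nu)(R\setminus G)/(\vphi\nu)(G)$ gives
$$\|\vphi\nu-\wt\nu\| = 2\,(\vphi\nu)(R\setminus G) \leq \frac{c}{\ell(R)^2}\,W_2(\mu,\nu)^2,$$
as required. For \rf{eqd423}, I use that $\vphi\nu$ and $\wt\nu$ have the same total mass and are both supported in $R$, whose diameter is at most $\sqrt{n}\,\ell(R)$. The transference plan that leaves the common part $\vphi\nu\wedge\wt\nu$ on the diagonal and couples the residual positive and negative parts as a product measure has quadratic cost at most $\tfrac12(\diam R)^2\,\|\vphi\nu-\wt\nu\|\lesssim W_2(\mu,\nu)^2$, giving $W_2(\vphi\nu,\wt\nu)\lesssim W_2(\mu,\nu)$.

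The main subtlety is the third case of the pointwise bound: a priori a point $x$ with $|x-Tx|>\ell(Q_{Tx})$ could sit deep inside $R$, where $\vphi(x)$ is of order $1$ and the desired inequality would fail; the argument succeeds because the Whitney estimate forces $Tx$ itself to lie close to $\partial R$, and then $x$ cannot escape far from $\partial R$ either without requiring more displacement than $|x-Tx|$. Everything else is essentially bookkeeping around an almost-trivial mass rearrangement bounded by $(\diam R)^2 \|\vphi\nu-\wt\nu\|$.
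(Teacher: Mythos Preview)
Your proof is correct and follows essentially the same approach as the paper: both establish the pointwise bound $\vphi(x)\lesssim |x-Tx|^2/\ell(R)^2$ on $R\setminus G$ by case analysis, integrate to control $(\vphi\nu)(R\setminus G)$, and then pass to the total variation and $W_2$ estimates. Your treatment of the third case via the triangle inequality for $\dist(\cdot,\partial R)$ is slightly more direct than the paper's, which instead routes through the Whitney neighbor property $\ell(Q_x)\approx\ell(Q_{Tx})$ when $3Q_x\cap Q_{Tx}\neq\varnothing$, but the two arguments are equivalent in spirit.
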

\vv

Below, to simplify notation we will write $\vphi\nu(E):=(\vphi\nu)(E)$ for any $E\subset\R^n$, and 
analogously for $\mu$.

\vv
\begin{proof}
The last estimate follows from \rf{eqd423a}, because of the control of the Wasserstein distances by the total variation.
Indeed, recall that for two arbitrary finite measures $\sigma$ and $\tau$ with finite $p$-th moments and the same mass, 
$$W_p(\sigma,\tau)\leq 2^{1-\frac1p} \left( \int |x-x_0|^p\,d|\sigma-\tau|(x)\right)^{1/p},$$ 
for $1\leq p<\infty$ and any arbitrary $x_0\in\R^d$. See \cite[Theorem 6.13]{Villani-oldnew}, for example. So one infers 
that
$$W_2(\vphi\nu,\wt\nu)\leq 2^{1/2}\,\diam(R)\,
\|\vphi\nu - \wt\nu\|^{1/2},$$
since  $\supp(\vphi\nu)\cup\supp(\wt\nu)\subset R$. Together with \rf{eqd423a}, this yields \rf{eqd423}.

To prove \rf{eqd423a}, we write
\begin{align}\label{eqd3100}
\|\vphi\nu - \wt\nu\| & \leq \int |1- \wt a|\,\vphi\,d\nu + 
\wt a\int|\vphi- \vphi\chi_G|\,d\nu\\
& = |1- \wt a|\,\vphi\nu(\R^n) + \wt a \int_{G^c}\vphi\,d\nu.\nonumber
\end{align}
To estimate  $\int_{G^c}\vphi\,d\nu$, 
denote by $G_1$ and $G_2$ the two sets appearing on the right side of \rf{eqdefg12}, respectively.
Then we set
$$\int_{G^c}\vphi\,d\nu = \int_{G_1^c}\vphi\,d\nu + \int_{G_1\cap G_2^c}\vphi\,d\nu = I_1+ I_2.$$
We will use the fact that for every $x\in R$, 
$$\vphi(x)\lesssim\frac{\ell(Q_x)^2}{\ell(R)^2}.
$$
Since $|Tx-x|\geq \ell(Q_x)$ for $x\in R\setminus G_1$, we deduce
$$\vphi(x)\lesssim \frac{|Tx-x|^2}{\ell(R)^2} \qquad \text{for $x\in R\setminus G_1$}.
$$
Therefore,
\begin{equation}\label{eqd32}
I_1\lesssim \frac{1}{\ell(R)^2}\int_{\supp(\vphi)} |Tx-x|^2\,d\nu
\leq \frac{1}{\ell(R)^2}\,W_2(\mu,\nu)^2.
\end{equation}

Let us turn our attention to the integral 
$$I_2 = \int_{R\cap G_1\cap G_2^c}\vphi\,d\nu.$$
Observe that if $x\in R\cap G_1\cap G_2^c$, then $|x-Tx|\leq \ell(Q_x)$ and so $Tx\in 3Q_x\subset R$.
 Thus $3Q_{x}\cap Q_{Tx}\neq\varnothing$. Thus,
 $$\ell(Q_x)\approx \ell(Q_{Tx})\leq |x-Tx|,$$
using that $Tx\in R$ and $x\nin G_2$ for the last inequality.
Therefore,
$$\vphi(x)\lesssim\frac{\ell(Q_x)^2}{\ell(R)^2} \lesssim \frac{|Tx-x|^2}{\ell(R)^2},$$
and then \rf{eqd32} also holds with $I_2$ instead of $I_1$ (multiplying by a constant if necessary). Thus,
\begin{equation}\label{eqd32'}
\int_{G^c}\vphi\,d\nu\lesssim \frac{1}{\ell(R)^2}\,W_2(\mu,\nu)^2.
\end{equation}

Now we deal with the term $|1- \wt a|$. Observe that 
$\wt a = \int \vphi\,d\nu/\int_G\vphi\,d\nu,$
and so
$$\wt a -1 = 
\frac{\int_{G^c}\vphi\,d\nu}{\int_{G}\vphi\,d\nu}
.
$$
From the assumption \rf{eqd422} and \rf{eqd32'} we infer that 
\begin{equation}\label{eqdf5}
\int_{G}\vphi\,d\nu \geq \frac12
 \int\vphi\,d\nu =\frac12\,\vphi\nu(\R^n)
\end{equation}
if $c_8$ is chosen small enough,
and thus 
$\wt a\leq2$ and
\begin{equation}\label{eqaat}
|1- \wt a| \leq \frac2{\vphi\nu(\R^n)}\vphi\nu(G^c)
\lesssim \frac1{\vphi\nu(\R^n)\,\ell(R)^{2}}\,W_2(\mu,\nu)^2.
\end{equation}
Plugging this estimate and \rf{eqd32} into \rf{eqd3100}, the lemma follows.
\end{proof}
\vvv

\begin{rem}\label{rema41}
If $W_2(\mu,\nu)$ is small enough, then from Remark \ref{remaaa}
it turns out that $\vphi\nu(\R^n)\approx 1$, and then from \rf{eqaat} we deduce
$$|1- \wt a| \lesssim W_2(\mu,\nu)^2.$$
\end{rem}

\vvv

\begin{lemma}\label{lem12}
Suppose the same notation and assumptions of Lemma \ref{lem11}. In particular, assume that
\begin{equation}\label{eqd422'} 
W_2(\mu,\nu)^2\leq c_8 \,(\vphi\mu)(\R^n)\,\ell(R)^2,
\end{equation}
where $c_8$ is some positive constant small enough. Then  $\vphi\bigl(T\#(\chi_G\,\nu)\bigr)(\R^n)
>0$.

Consider the measure
$$
\wt\mu = \wt b\,\vphi\bigl(T\#(\chi_G\,\nu)\bigr)$$
with $\wt b$ chosen so that $(\vphi\mu)(\R^n)=\wt\mu(\R^n)$.
Then we have
\begin{equation}\label{eqd423a'} 
\|\vphi\mu - \wt\mu\| \leq \frac{c}{\ell(R)^2}\,W_2(\mu,\nu)^2,
\end{equation}
and consequently,
\begin{equation}\label{eqd423'} 
W_2(\vphi\mu,\wt\mu) \lesssim W_2(\mu,\nu).
\end{equation}
\end{lemma}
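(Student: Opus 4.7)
The plan is to mimic the structure of the proof of Lemma \ref{lem11}, but now transferring the analysis from $\nu$-side to $\mu$-side via the optimal transport map $T$. The starting observation is that since $T\#\nu=\mu$, we have the identity
$$\vphi\mu \,-\, \vphi\,T\#(\chi_G\nu) \,=\, \vphi\,T\#(\chi_{G^c}\nu),$$
which is a nonnegative measure. By the triangle inequality, writing $\wt\mu=\wt b\,\vphi\,T\#(\chi_G\nu)$, we get
$$\|\vphi\mu-\wt\mu\|\leq \vphi\,T\#(\chi_{G^c}\nu)(\R^n)\,+\,|1-\wt b|\,\vphi\,T\#(\chi_G\nu)(\R^n).$$
Since $\vphi\,T\#(\chi_{G^c}\nu)(\R^n)=\int_{G^c}\vphi\circ T\,d\nu$, everything reduces to estimating $\vphi(Tx)$ at points $x\in G^c$ (where only $x$ with $Tx\in R$ contribute).

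The main calculation is to show that $\vphi(Tx)\lesssim |x-Tx|^2/\ell(R)^2$ on $G^c\cap T^{-1}(R)$. This is the mirror of the bound $\vphi(x)\lesssim |x-Tx|^2/\ell(R)^2$ used in Lemma \ref{lem11}, but in the $Tx$ variable. I would split $G^c$ into $G_2^c$ and $G_1^c\cap G_2$. For $x\in G_2^c\cap T^{-1}(R)$, the defining inequality $|x-Tx|>\ell(Q_{Tx})$ combined with the $\GZ_0(R)$ bound $\vphi(Tx)\lesssim\ell(Q_{Tx})^2/\ell(R)^2$ gives the claim directly. For $x\in G_1^c\cap G_2\cap T^{-1}(R)$ we have $x\in R$, $|x-Tx|>\ell(Q_x)$ and $|x-Tx|\leq\ell(Q_{Tx})$; this last inequality forces $x\in 3Q_{Tx}$, so $Q_x\cap 3Q_{Tx}\neq\varnothing$, hence $\ell(Q_{Tx})\approx\ell(Q_x)\leq |x-Tx|$, and again $\vphi(Tx)\lesssim |x-Tx|^2/\ell(R)^2$. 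Integrating against $d\nu$ and using the change of variables $T\#\nu=\mu$ on the left, one obtains
$$\int_{G^c}\vphi\circ T\,d\nu\,\lesssim\,\frac{1}{\ell(R)^2}\int|x-Tx|^2\,d\nu(x)\,=\,\frac{W_2(\mu,\nu)^2}{\ell(R)^2}.$$

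Next, to control $|1-\wt b|$, I note that $\wt b=\int\vphi\circ T\,d\nu/\int_G\vphi\circ T\,d\nu=\vphi\mu(\R^n)/\int_G\vphi\circ T\,d\nu$, so
$$\wt b-1=\frac{\int_{G^c}\vphi\circ T\,d\nu}{\int_G\vphi\circ T\,d\nu}.$$
The hypothesis \rf{eqd422'} together with the previous bound gives $\int_{G^c}\vphi\circ T\,d\nu\leq c\,c_8\,\vphi\mu(\R^n)$, so if $c_8$ is small enough then $\int_G\vphi\circ T\,d\nu\geq\tfrac12\vphi\mu(\R^n)>0$ (this simultaneously settles the positivity assertion $\vphi(T\#(\chi_G\nu))(\R^n)>0$ and makes $\wt\mu$ well defined), and $|1-\wt b|\lesssim W_2(\mu,\nu)^2/(\vphi\mu(\R^n)\ell(R)^2)$. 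Plugging this into the triangle-inequality bound above and using $\vphi\,T\#(\chi_G\nu)(\R^n)\leq\vphi\mu(\R^n)$ gives \rf{eqd423a'}.

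The estimate \rf{eqd423'} is then obtained verbatim as in Lemma \ref{lem11}: since both $\vphi\mu$ and $\wt\mu$ are supported in $R$ and have the same mass, Theorem 6.13 in \cite{Villani-oldnew} yields $W_2(\vphi\mu,\wt\mu)\lesssim\diam(R)\,\|\vphi\mu-\wt\mu\|^{1/2}\lesssim\ell(R)\cdot W_2(\mu,\nu)/\ell(R)=W_2(\mu,\nu)$. The main point of novelty compared to Lemma \ref{lem11}, and the step I expect to require the most care, is the bookkeeping in the second half of the $G^c$ splitting, where the non-symmetry between the roles of $x$ and $Tx$ forces one to use the geometric relation $Q_x\cap 3Q_{Tx}\neq\varnothing$ to swap $\ell(Q_x)$ for $\ell(Q_{Tx})$; the rest is a direct transcription of the proof of Lemma \ref{lem11}.
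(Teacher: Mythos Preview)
Your proof is correct and follows essentially the same route as the paper: the same splitting $G^c=G_2^c\cup(G_2\cap G_1^c)$, the same Whitney-cube comparison $\ell(Q_{Tx})\approx\ell(Q_x)$ via $Q_x\cap 3Q_{Tx}\neq\varnothing$, and the same control of $|1-\wt b|$ from the smallness hypothesis. The only cosmetic difference is that the paper writes the triangle inequality as $\|\vphi\mu-\wt\mu\|\leq|1-\wt b|\,\vphi\mu(\R^n)+\wt b\int_{G^c}\vphi\circ T\,d\nu$ rather than your version, but the resulting bounds are identical.
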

\vv

\vv
\begin{proof}
The arguments are quite similar to the ones of the preceding lemma. However, for completeness, we
show the details. The last estimate follows from \rf{eqd423a'}, taking into account that
 $\supp(\vphi\mu)\cup\supp(\wt\mu)\subset R$.

To prove \rf{eqd423a'}, we write
\begin{align}\label{eqd31}
\|\vphi\mu - \wt\mu\| & \leq \int |1- \wt b|\,\vphi\,d\mu + 
\wt b\,\biggl|\int\bigl(\vphi \,d T\#\nu- \vphi\,dT\#(\chi_G\,\nu)\bigr)\biggr|\\
& = |1-\wt b|\,\vphi\mu(\R^n) + \wt b \int_{G^c}\vphi(T(x))\,d\nu(x).\nonumber
\end{align}

To estimate the last integral on the right side, 
denote by $G_1$ and $G_2$ the two sets appearing on the right side of \rf{eqdefg12} respectively.
Then we have
$$\int_{G^c}\vphi(T(x))\,d\nu(x) = \int_{G_2^c}\vphi(T(x))\,d\nu(x) + \int_{G_2\cap G_1^c}\vphi(T(x))\,d\nu(x)
 = I_1+ I_2.$$
First we deal with $I_1$. Notice that we may assume that for $x$ in the domain of integration of $I_1$ 
(and $I_2$), we 
have $x\in T^{-1}(R)$. Since $x\not\in G_2$, this implies that 
$|x-Tx|>\ell(Q_{T_x})$, and thus
$$\vphi(Tx)\lesssim \frac{\ell(Q_{Tx})^2}{\ell(R)^2}\leq \frac{|x-Tx|^2}{\ell(R)^2}.$$
Therefore,
\begin{equation}\label{eqd32*}
I_1\lesssim \frac{1}{\ell(R)^2}\int |Tx-x|^2\,d\nu(x)
= \frac{1}{\ell(R)^2}\,W_2(\mu,\nu)^2.
\end{equation}

Let us turn our attention to the integral 
$I_2 = \int_{G_2\cap G_1^c}\vphi(Tx)\,d\nu(x).$
Observe that if $x\in T^{-1}(R)\cap G_2\cap G_1^c$, then $|x-Tx|\leq\ell(Q_{Tx})$, and so
$x\in 3Q_{Tx}\subset R$. Thus, $Q_x\cap 3Q_{Tx}\neq\varnothing$, and thus 
$\ell(Q_x)\approx\ell(Q_{T_x})$.
So we deduce
$$\vphi(Tx)\lesssim \frac{\ell(Q_{Tx})^2}{\ell(R)^2} \lesssim \frac{\ell(Q_{x})^2}{\ell(R)^2}
\leq \frac{|x-Tx|^2}{\ell(R)^2},$$
using that $x\in R\setminus G_1$ in the last inequality.
Then \rf{eqd32*} also holds with $I_2$ instead of $I_1$. Thus,
\begin{equation}\label{eqd32*'}
\int_{G^c}\vphi(Tx)\,d\nu(x)\lesssim \frac{1}{\ell(R)^2}\,W_2(\mu,\nu)^2.
\end{equation}

Now we deal with the term $|1- \wt b|$. Observe that 
$\wt b = \int \vphi\circ T\,d\nu/\int_G\vphi\circ T\,d\nu,$
and so
$$\wt b -1 = 
\frac{\int_{G^c}\vphi\circ T\,d\nu}{\int_G\vphi\circ T\,d\nu}
.
$$
From the assumption \rf{eqd422'} and \rf{eqd32*'} we infer that 
$$\int_{G^c}\vphi\circ T\,d\nu\lesssim c_8\,\vphi\mu(\R^n).$$
Therefore,
\begin{equation}\label{eqdf5'}
\int_{G}\vphi\circ T\,d\nu = \vphi\mu(\R^n) - 
\int_{G^c}\vphi\circ T\,d\nu\geq \frac12\,\vphi\mu(\R^n),
\end{equation}
choosing $c_8$ small enough.
Thus $\wt b\leq2$ and
$$|1- \wt b\,| \leq \frac2{\vphi\mu(\R^n)}\int_{G^c}\vphi\circ T\,d\nu
\lesssim \frac1{\vphi\mu(\R^n)\,\ell(R)^{2}}\,W_2(\mu,\nu)^2.$$
Plugging this estimate and \rf{eqd32*'} into \rf{eqd31}, the lemma follows.
\end{proof}

\vvv

\begin{rem}\label{remb41}
For the record, observe that the inequality \rf{eqd32*'} says that
$$\|\vphi\mu- \wt b\,^{-1}\wt \mu\|\lesssim \frac{1}{\ell(R)^2}\,W_2(\mu,\nu)^2.
$$
\end{rem}

\vvv


\subsection{The key lemma}

We need the following auxiliary result.

\begin{lemma}\label{lemkr0}
Let  $R$ and $\vphi\in \GZ(R)$ be as in Theorem \ref{teoloc}. 
Consider the measure $d\wt\sigma=\vphi\,dm$ and let 
$Q\in\DD(R)$. Consider a bounded function $h$ supported on $Q$ such that $\|h\|_\infty\leq1$ and $\int h\,d\wt\sigma=0$.
Then there exists a map $U:Q\to Q$ such that
$U\# (\wt\sigma\rest Q) = (1+h)\,\wt\sigma\rest Q$ which satisfies
$$
|Ux - x|\lesssim \ell(Q)\,\|h\|_\infty\qquad\text{for all $x\in Q$.}
$$
\end{lemma}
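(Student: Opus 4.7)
The plan is to construct $U$ as an iterated one-dimensional monotone rearrangement (Knothe map) from $\wt\sigma\rest Q$ to $(1+h)\wt\sigma\rest Q$, ordering the coordinates so that the directions along which $\vphi$ may vanish are handled last. I may assume $\|h\|_\infty$ is smaller than a small absolute constant $\varepsilon_0$, since otherwise any $U\colon Q\to Q$ trivially satisfies $|Ux-x|\le\diam(Q)\lesssim\ell(Q)\lesssim\|h\|_\infty\ell(Q)$. I then pick Cartesian coordinates aligned with the axes of $R$ and relabel them so that the \emph{tangential} directions (those in which $Q$ lies in the interior of $R$) come first, and the \emph{normal} directions (those along which a face of $Q$ coincides with a face of $R$, where $\vphi$ therefore vanishes) come last.

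Next, I define $U=(U_1,\ldots,U_n)$ recursively: $U_k(x_1,\ldots,x_k)$ is the one-dimensional monotone rearrangement in the $k$-th variable between the conditional densities of $\wt\sigma\rest Q$ and of $(1+h)\wt\sigma\rest Q$ at the already-matched first $k-1$ coordinates. Since $|Ux-x|\le\sum_k|U_k(x_1,\ldots,x_k)-x_k|$, it suffices to bound each one-dimensional displacement by $\|h\|_\infty\ell(Q)$. For a tangential direction, the bound $|\nabla_T\vphi|\le c_2\vphi/\ell(R)$ from the definition of $\GZ(R)$ forces $\vphi$ to vary on $Q$ by only a bounded factor along tangential lines, so the conditional density $f_k$ (obtained after integrating out the orthogonal coordinates) has bounded ratio. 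Writing $F_k$ for its antiderivative and $h_k$ for the conditional mean-zero perturbation with $\|h_k\|_\infty\le\|h\|_\infty$, the defining identity $F_k(x_k)=G_k(U_k)$ combined with $\int h_k\,dF_k=0$ yields
\begin{equation*}
|F_k(U_k)-F_k(x_k)|\;=\;\Bigl|\int_0^{U_k}h_k\,dF_k\Bigr|\;\le\;\|h\|_\infty\min\bigl(F_k(U_k),\,F_k(\ell(Q))-F_k(U_k)\bigr),
\end{equation*}
which together with the bounded ratio of $f_k$ gives $|U_k-x_k|\lesssim\|h\|_\infty\ell(Q)$.

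For a normal direction, the structural bound $\vphi(x)\approx\dist(x,\partial R)^2/\ell(R)^2$ makes $F_k$ grow at least cubically from the vanishing face, and the first term in the $\min$ above yields $|U_k^3-x_k^3|\lesssim\|h\|_\infty U_k^3$, hence $|U_k/x_k-1|\lesssim\|h\|_\infty$ and finally $|U_k-x_k|\lesssim\|h\|_\infty\,x_k\le\|h\|_\infty\ell(Q)$. Summing over $k$ completes the proof. The main obstacle is the case where $Q$ meets $\partial R$ along more than one face (so $Q$ sits near a corner or edge of $R$), because then $\vphi\approx(\min_i d_i)^2/\ell(R)^2$ does not factor as a product of one-variable densities and the conditionals in the normal block are not pure powers of a single coordinate. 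The plan to handle this is by induction on the codimension of $\partial R\cap\partial Q$: at each step one conditions on the tangential directions (where $|\nabla_T\vphi|\le c_2\vphi/\ell(R)$ still gives bounded-ratio control) and verifies that the resulting conditional in the next normal direction dominates a cubic from its vanishing face, so the same cube-root argument goes through with constants depending only on $c_2$.
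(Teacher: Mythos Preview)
Your overall plan via the Knothe--Rosenblatt map is the same as the paper's, but there is a genuine gap at the step where you assert that the conditional perturbation satisfies $\|h_k\|_\infty\le\|h\|_\infty$. In the Knothe construction, $U_k$ transports the $\wt\sigma$-conditional at the base point $(x_1,\ldots,x_{k-1})$ to the $(1+h)\wt\sigma$-conditional at the \emph{already-matched} base point $(y_1,\ldots,y_{k-1})$, so the effective perturbation $h_k$ has two sources: the original $h$, and the base-point shift $(x_{1:k-1})\to(y_{1:k-1})$. The first is bounded by $\|h\|_\infty$, but the second is not, and you do not address it; as stated, the inequality $\|h_k\|_\infty\le\|h\|_\infty$ is false for $k\ge2$. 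Proving the correct estimate $\|h_k\|_\infty\lesssim\|h\|_\infty$ is in fact the heart of the argument---the paper devotes a separate lemma to it---and it is the \emph{only} place where the tangential-derivative hypothesis $|\nabla_T\vphi|\lesssim\dist(x,\partial R)^2/\ell(R)^3$ is used. Roughly: a shift $|y_i-x_i|\lesssim\|h\|_\infty\,x_i$ in a coordinate which is not the current minimum is a tangential move, so $|\partial_{x_i}\vphi|\lesssim\vphi/\ell(R)$ gives a change $\lesssim\|h\|_\infty\vphi$; a shift in the minimum coordinate uses $|\partial_{x_i}\vphi|\lesssim x_i/\ell(R)^2$ together with $|y_i-x_i|\lesssim\|h\|_\infty\,x_i$ to reach the same conclusion. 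One must then integrate these pointwise bounds to pass to the marginals $s_j$, splitting according to whether the minimum is a fixed or an integrated variable.

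Two smaller points. Your coordinate ordering (tangential first, normal last) is the reverse of the paper's (normal first, via $a_1\le\cdots\le a_n$); both orderings work, but the base-point shift estimate above is required in either case and already for a single normal direction, not only in the corner case you flag. Also, for a normal coordinate the claim that $F_k$ ``grows at least cubically'' is not enough to deduce $|U_k-x_k|\lesssim\|h\|_\infty\,x_k$: you need the two-sided density control $f_k(t)\approx t^2$ near the vanishing face (and $f_k\approx1/\ell(Q)$ beyond the current minimum of the previously processed normal coordinates) so that the mean-value theorem converts $|F_k(U_k)-F_k(x_k)|\lesssim\|h\|_\infty\,F_k(U_k)$ into the desired displacement bound.
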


\vvv
We will prove this lemma by means of the Knothe-Rosenblatt coupling. The arguments involve 
some calculations that are quite lengthy
and rather tedious, and so we will defer the proof to  Section \ref{secappendix}.

\begin{lemma}\label{lemkr1}
Let $\mu$, $R$, and $\vphi\in \GZ(R)$ be as in Theorem \ref{teoloc}. 
Denote $\sigma=\vphi\mu$ and let 
$Q\in\DD(R)$. Consider a bounded function $h$ supported on $Q$ such that $\|h\|_\infty\leq1$ and $\int h\,d\sigma=0$.
Then 
$$W_2(\sigma\rest Q,\,(1+h)\,\sigma\rest Q)\lesssim \ell(Q)\,\|h\|_\infty\sigma(Q)^{1/2}.$$
\end{lemma}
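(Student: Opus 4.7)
The plan is to construct a map $U:Q\to Q$ with $U\#(\sigma\rest Q)=(1+h)\sigma\rest Q$ and
$$|Ux-x|\lesssim \ell(Q)\,\|h\|_\infty\qquad \text{for all }x\in Q,$$
exactly as in Lemma~\ref{lemkr0} but with $\sigma$ playing the role of $\wt\sigma$. Once such a $U$ is in hand, the coupling $(\mathrm{id}\times U)\#(\sigma\rest Q)$ has first marginal $\sigma\rest Q$ and second marginal $(1+h)\sigma\rest Q$ (the two measures have equal mass because $\int h\,d\sigma=0$), so
$$W_2(\sigma\rest Q,\,(1+h)\sigma\rest Q)^2 \leq \int_Q |Ux-x|^2\,d\sigma(x) \lesssim \ell(Q)^2\|h\|_\infty^2\,\sigma(Q),$$
which yields the desired estimate upon extracting a square root.

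To construct $U$ I would re-run the Knothe--Rosenblatt coupling used in the proof of Lemma~\ref{lemkr0} (deferred to Section~\ref{secappendix}), now applied to the pair $\sigma\rest Q$ and $(1+h)\sigma\rest Q$. Since $\mu\rest R = f\,dx$ with $c_4^{-1}\leq f\leq c_4$, the density of $\sigma$ on $Q$ equals $\vphi f$, which is comparable pointwise to $\vphi$ with constants depending only on $c_4$; the same holds for $(1+h)\vphi f$, using $\|h\|_\infty\leq 1$. The Knothe--Rosenblatt map is built from iterated one-dimensional monotone rearrangements of conditional distributions, and the resulting pointwise displacement depends on the densities only through their comparability to $\vphi$ and the corresponding lower bounds for the one-dimensional marginals. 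Hence the very same estimates produced in Section~\ref{secappendix} yield $|Ux-x|\lesssim \ell(Q)\|h\|_\infty$, with implicit constants depending only on $c_2$ and $c_4$.

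The main obstacle is that Lemma~\ref{lemkr0} cannot be invoked as a black box: since $f$ is merely bounded and not Lipschitz, the perturbed density $\vphi f$ does not belong to the class $\GZ(R)$. One therefore has to trace through the tedious computations of Section~\ref{secappendix} and check that they use the density $\vphi$ only through its pointwise comparability to $\dist(\cdot,\partial R)^2/\ell(R)^2$ (and the associated control of conditional densities on slices), rather than through the Lipschitz-type smoothness encoded in the definition of $\GZ(R)$. This should be the case, since cumulative distribution functions and their inverses on one-dimensional slices are well-behaved as soon as the density is bounded above and below, and a multiplicative perturbation of the density by the bounded factor $f$ only perturbs these CDFs by bi-Lipschitz compositions. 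The displacement estimate from Lemma~\ref{lemkr0} then transfers from $\wt\sigma$ to $\sigma$ at the cost of constants involving $c_4$, completing the proof.
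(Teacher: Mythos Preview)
Your plan has a genuine gap. The Knothe--Rosenblatt argument in Section~\ref{secappendix} does \emph{not} rely only on the comparability $\vphi\approx \dist(\cdot,\partial R)^2/\ell(R)^2$. The crucial estimate \rf{eqkr51} is proved in Lemma~\ref{lemdef1}, where the term $A$ in \rf{eqfifiab} compares conditional densities of $\sigma_0$ at the shifted base points $(x_1,\dots,x_{j-1})$ and $(y_1,\dots,y_{j-1})$. Bounding this term uses the gradient bounds \rf{eqnabno} on the density $s=\vphi$ (see \rf{eqfifi1}, \rf{eqfifi2}, \rf{eqfifi4}); indeed the paper remarks after Lemma~\ref{lemdef1} that this is the only place where the tangential smoothness in $\GZ(R)$ is used. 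If you replace $\vphi$ by $\vphi f$ with $f$ merely bounded measurable, those gradient bounds are unavailable and the estimate on $A$ breaks down: the conditional density at $(y_1,\dots,y_{j-1})$ need not be close to the one at $(x_1,\dots,x_{j-1})$ to order $\|h\|_\infty$.

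The paper avoids this issue with a different, short trick (attributed to Peyre). One sets $\wt\sigma=\tfrac12 c_4^{-1}\vphi\,dm$, so that $0\leq\sigma-\wt\sigma$ and $\wt\sigma$ has the smooth density $\vphi$ to which Lemma~\ref{lemkr0} applies directly. Writing $h\,\sigma=\wt h\,\wt\sigma$ with $\wt h=2c_4 f h$ (so $\|\wt h\|_\infty\lesssim\|h\|_\infty$ and $\int_Q\wt h\,d\wt\sigma=0$), one gets a map $\wt U$ with $\wt U\#(\wt\sigma\rest Q)=(1+\wt h)\wt\sigma\rest Q$ and $|\wt Ux-x|\lesssim\ell(Q)\|h\|_\infty$. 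The transference plan is then the identity on the rough part $\sigma-\wt\sigma$ plus $(Id\times\wt U)\#\wt\sigma$ on the smooth part; this couples $\sigma\rest Q$ with $(1+h)\sigma\rest Q$ and gives the desired $W_2$ bound. The point is that only the smooth piece $\wt\sigma$ is moved, so the Knothe--Rosenblatt estimates are applied where they are valid.
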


\begin{proof}
Recall that the density of $\mu$ satisfies $c_4^{-1}\leq f(x) \leq c_4$ for $x\in R$.
So if we consider the measure $d\wt\sigma= \frac12\,c_4^{-1}\vphi\,dm$, then we have 
$2\wt\sigma\leq \sigma\leq 2c_4^2\,\wt\sigma$. Notice also that we may assume that $\|h\|_\infty$ is small enough, because otherwise the lemma is trivial. So we suppose that $\|h\|_\infty\leq c_4^{-2}/4$. 
We write $\sigma = (\sigma - \wt\sigma) + \wt\sigma$, and
$$(1+h)\,\sigma = (\sigma - \wt\sigma) + \wt\sigma + h \,\frac{d\sigma}{d\wt\sigma}\,\wt\sigma =
(\sigma - \wt\sigma) + \Bigl(1+ 2c_4\,f\,h\Bigr)\,\wt\sigma.$$
Notice that the function $\wt h= 2c_4\,f\,h$ satisfies $\|\wt h\|_\infty\leq 1/2$ and $\int_Q \wt h\,d\wt\sigma
=0$.
By Lemma \ref{lemkr0} applied to $\wt\sigma$ and $\wt h$, we deduce that
there exists a map $\wt U:Q\to Q$ such that
$\wt U\# (\wt\sigma\rest Q) = (1+\wt h)\,\wt\sigma\rest Q$ which satisfies
$$
|\wt Ux - x|\lesssim \ell(Q)\,\|\wt h\|_\infty\approx \ell(Q)\,\|h\|_\infty\qquad\text{for all $x\in Q$.}
$$

Since $\sigma-\wt\sigma$ is a positive measure, we can consider the transference plan
$$d\pi(x,y) = d(\sigma-\wt\sigma)\rest Q(x)\,\delta_{x=y} + (Id \times \wt U)\#\wt\sigma\rest Q(x,y).$$
The marginal measures of $\pi$ are  $\sigma\rest Q$ and 
$$\sigma\rest Q-\wt\sigma\rest Q + \wt U\#
\wt\sigma\rest Q = \sigma\rest Q + \wt h \,\wt\sigma\rest Q = (1+h)\,\sigma\rest Q.$$
Then,
$$W_2(\sigma\rest Q,\,(1+h)\,\sigma\rest Q)^2\leq\int|x-y|^2\,d\pi(x,y) = 
\int_Q |x-\wt U(x)|^2\,d\wt\sigma\lesssim \ell(Q)^2\,\|h\|_\infty^2\,\wt\sigma(Q).$$
Since $\wt\sigma(Q)\approx\sigma(Q)$, the lemma follows.
\end{proof}

The idea of subtracting a smaller nicer measure $\wt\sigma$ to $\sigma$ is inspired by some techniques from Peyre \cite{Peyre}.

Notice, by the way, that the measures $\wt\sigma$ and $\sigma$ in the previous lemmas are doubling, because of the assumptions on $\mu$ and
$\vphi$. This property is used strongly in many of the arguments below.

\vvv

The following is an easy consequence of Lemma \ref{lemkr1}.

\begin{lemma}\label{lemkr2}
Let $\mu$, $R$, and $\vphi\in \GZ(R)$ be as in Theorem \ref{teoloc}. 
Denote $\sigma=\vphi\mu$ and let $Q\in\DD(R)$. 
Consider a function $h$ supported on $Q$ that is constant on the children of $Q$ and vanishes out 
of $Q$, that 
is,
$$h=\sum_{P\in\CH(Q)}c_P\,\chi_P.$$
Assume moreover that $\int h\,d\sigma=0$ and $\|h\|_\infty\leq1$. Then,
$$W_2(\sigma\rest Q,\,(1+h)\,\sigma\rest Q)\lesssim \ell(Q)\,\|h\|_\infty\sigma(Q)^{1/2}
\approx \ell(Q)\,\|h\|_{L^2(\sigma)}
.$$
\end{lemma}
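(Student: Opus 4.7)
The plan is to reduce directly to Lemma~\ref{lemkr1}. The hypotheses of that lemma are all satisfied by $h$: it is bounded with $\|h\|_\infty\leq 1$, supported on $Q$, and $\int h\,d\sigma=0$. The additional structure that $h$ is piecewise constant on $\CH(Q)$ plays no role in applying Lemma~\ref{lemkr1}, so we immediately obtain
$$W_2(\sigma\rest Q,\,(1+h)\,\sigma\rest Q)\lesssim \ell(Q)\,\|h\|_\infty\,\sigma(Q)^{1/2}.$$
Thus the only thing left is the equivalence $\|h\|_\infty\,\sigma(Q)^{1/2}\approx \|h\|_{L^2(\sigma)}$, which is purely a statement about how $\sigma$ distributes over the children of $Q$.

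The key point is to show that $\sigma(P)\approx \sigma(Q)$ for every $P\in \CH(Q)$, with constants depending only on $n$, $c_2$, $c_4$. Given this, since $h=\sum_{P\in \CH(Q)} c_P\,\chi_P$ and $|\CH(Q)|=2^n$,
$$\|h\|_{L^2(\sigma)}^2=\sum_{P\in \CH(Q)} c_P^2\,\sigma(P)\approx \sigma(Q)\sum_{P\in \CH(Q)} c_P^2\approx \sigma(Q)\,\|h\|_\infty^2,$$
using that there are only $2^n$ children and $\|h\|_\infty=\max_P|c_P|$. This delivers the claimed equivalence and completes the proof.

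The comparison $\sigma(P)\approx\sigma(Q)$ is the only real content of the lemma beyond Lemma~\ref{lemkr1}. It follows from the doubling property of $\sigma$ noted in the remark just preceding the statement: since $f\approx 1$ on $R$ and $\vphi(x)\approx \dist(x,\partial R)^2/\ell(R)^2$, the measure $\sigma$ is doubling on $R$, and any $P\in\CH(Q)$ contains a ball of radius $\approx\ell(Q)$ while sitting inside a concentric ball of radius $\approx\ell(Q)$ covering $Q$. Alternatively, one can verify $\sigma(P)\approx\sigma(Q)$ by direct integration, since $\int_P\dist(x,\partial R)^2\,dm$ scales predictably under dyadic subdivision even when $Q$ abuts $\partial R$. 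The mildly subtle case is precisely this boundary case, but the quadratic degeneration of $\vphi$ built into the definition of $\GZ(R)$ is what makes the ratio bounded; this is the only place where one must be a bit careful, and it is otherwise routine.
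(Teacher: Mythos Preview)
Your proof is correct and follows exactly the paper's approach: the first inequality is immediate from Lemma~\ref{lemkr1}, and the equivalence $\|h\|_\infty\,\sigma(Q)^{1/2}\approx\|h\|_{L^2(\sigma)}$ reduces to $\sigma(P)\approx\sigma(Q)$ for $P\in\CH(Q)$, which is the doubling property of $\sigma$ noted just after Lemma~\ref{lemkr1}. You have simply spelled out the details that the paper leaves to the reader.
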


\vvv
Observe that the first inequality is the one that follows from Lemma \ref{lemkr1}, while the second one is a direct consequence
of the special form of $h$, using that $\sigma(P)\approx\sigma(Q)$ for all $P\in \CH(Q)$.

\vvv
Given measures $\sigma, \tau$ and a dyadic cube $Q$, 
we consider the function defined by
$$\Delta_Q^\sigma \tau(x) = \left\{\begin{array}{ll}\dfrac{\tau(P)}{\sigma(P)} - \dfrac{\tau(Q)}{\sigma(Q)}\qquad &\text{if $x\in P\in\CH(Q)$,}\\ &\text{}\\
0 &\text{if $x\not\in Q$,}\end{array}
\right.$$
assuming that $\sigma(P),\sigma(Q)>0$ for the cubes $P,Q$ above.
If $\sigma$ coincides with the Lebesgue measure on $\R^n$, then we set $\Delta_Q \tau:=\Delta_Q^\sigma \tau$.
If $\tau = g(x)\,dx$, then we write $\Delta_Q g :=\Delta (g\,dx) = \Delta_Q\tau$. Recall that if $g\in L^2$, then 
\begin{equation}\label{eqortog}
g=\sum_{Q\in\DD} \Delta_Q g,
\end{equation}
with the sum converging in $L^2$.
Moreover, the functions $\Delta_Q \tau$, $Q\in\DD$, are mutually orthogonal, and thus 
$\|g\|_2^2 = \sum_{Q\in\DD} \|\Delta_Q g\|_2^2.$
In fact, \rf{eqortog} coincides with the decomposition of $g$ into Haar wavelets: recall
that $g$ can be written as
$$g=\sum_{Q\in\DD} \sum_{\ve\in\EE}\langle g,\,h_Q^\ve\rangle\,h_Q^\ve,$$
where $\EE=\{0,1\}^n\setminus (0,\ldots,0)$
and each $h_Q^\ve$ is a Haar $n$-dimensional wavelet supported on $Q$, associated to the index   
$\ve\in \EE$. Then it is easy to check that for every $Q\in\DD$,
$$\Delta_Q g = \sum_{\ve\in\EE}\langle g,\,h_Q^\ve\rangle\,h_Q^\ve,$$
which we will also write as
$\Delta_Q \tau = \sum_{\ve\in\EE}\langle \tau,\,h_Q^\ve\rangle\,h_Q^\ve.$
See \cite[Part I]{David-wavelets} for more details, for example.

Now we are going to introduce the definition of a {\em tree} of dyadic cubes. For $R\in\DD$, consider a family
$\SSS$ (possibly empty) of disjoint cubes from $\DD(R)$ which satisfies the following property:
$$\mbox{if $Q\in\SSS$, then every brother of $Q$ also belongs to $\SSS$.}$$
 Denote 
$$\TT= \bigl\{Q\in\DD(R):\,Q\mbox{ is not contained in any cube from $\SSS$}\bigr\}.$$
Then we say that $\TT$ is a tree with {\em root} $R$, and the cubes from the family $\SSS$ are called the {\em stopping cubes of $\TT$}. Notice that in our the definition, the stopping cubes do not belong to $\TT$.
Observe also that
$\TT$ has the property that if $Q\in
\TT$ and $P\in\DD(R)$ is such that $Q\subset P\subset R$, then $P\in\TT$ too. This is why $\TT$ is called  tree.
 
\vv
Using Lemma \ref{lemkr2} as a kind of building block, we will prove the following.\vv

\begin{lemma}[Key lemma]\label{lemkey}
Let $\mu$, $R$, and $\vphi\in \GZ(R)$ be as in Theorem \ref{teoloc}. 
Denote $\sigma=\vphi\mu$, let $\TT$ be a tree with root $R$ and $\SSS\subset R$ be its family of stopping cubes.
Consider
a measure $\tau$ supported on $R$ such that $\tau(R)=\sigma(R)$ and
 $\tau(Q)\approx\vphi\mu(Q)=\sigma(Q)$ for all $Q\in \TT$.
Then, for any $\alpha>0$, we have
\begin{align}\label{eqkey7}
W_2(\sigma,\tau)^2& \lesssim 
\sum_{Q\in \TT}\frac1{m_Q\sigma}\,\|\Delta_Q(\sigma-\tau)\|_{L^2}^2\,\ell(Q)^{2-\alpha}\ell(R)^\alpha\\
& \quad + \sum_{Q\in \TT} \frac{|m_Q\sigma - m_Q\tau|^2}{(m_Q\sigma)^3}\,
\,\|\Delta_Q\sigma\|_{L^2}^2\,\ell(Q)^{2-\alpha}\ell(R)^\alpha  
 + \sum_{Q\in \SSS} \ell(Q)^2\,\tau(Q),\nonumber
\end{align}
with constants depending on $\alpha$.
\end{lemma}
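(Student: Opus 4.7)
The plan is to construct a transport from $\sigma$ to $\tau$ by iterating Lemma \ref{lemkr2} top-down along the tree $\TT$, absorbing the resulting geometric series through a Cauchy--Schwarz step weighted by the free parameter $\alpha$. Set $\sigma_0=\sigma$ and, for $k\geq 0$, define $\sigma_{k+1}$ by modifying $\sigma_k$ on each cube of $\TT\cap\DD_k(R)$ and leaving it unchanged elsewhere. By induction we arrange that $\sigma_k\rest Q=\tfrac{\tau(Q)}{\sigma(Q)}\,\sigma\rest Q$ for every $Q\in\TT\cap\DD_k(R)$, so in particular $\sigma_k(Q)=\tau(Q)$. To pass from $\sigma_k$ to $\sigma_{k+1}$ on such a $Q$ we multiply the restriction by $1+h_Q$, where $h_Q=\sum_{P\in\CH(Q)} c_P^Q\,\chi_P$ with
$$c_P^Q=\frac{\tau(P)\,\sigma(Q)-\tau(Q)\,\sigma(P)}{\tau(Q)\,\sigma(P)},$$
so that the new mass on each child $P$ equals $\tau(P)$ and the inductive relation propagates. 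The hypothesis $\tau(Q)\approx\sigma(Q)$ for all $Q\in\TT$ together with the doubling of $\sigma$ give $\|h_Q\|_\infty\lesssim 1$, and $\int h_Q\,d\sigma_k=0$ by construction, so Lemma \ref{lemkr2} is applicable after rescaling $\sigma_k\rest Q$ to $\sigma\rest Q$ via $W_2(cA,cB)^2=c\,W_2(A,B)^2$. On stopping cubes nothing is done, and the limit $\sigma^{(\infty)}:=\lim_k\sigma_k$ (taken as a martingale limit if $\TT$ is infinite) satisfies $\sigma^{(\infty)}\rest Q=\tfrac{\tau(Q)}{\sigma(Q)}\sigma\rest Q$ on each $Q\in\SSS$.

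By the triangle inequality, $W_2(\sigma,\tau)^2\lesssim W_2(\sigma,\sigma^{(\infty)})^2+W_2(\sigma^{(\infty)},\tau)^2$. For the first term, $W_2(\sigma,\sigma^{(\infty)})\leq\sum_{k\geq 0}W_2(\sigma_k,\sigma_{k+1})$, and Cauchy--Schwarz with weights $\{2^{-k\alpha/2}\}_k$ yields
$$W_2(\sigma,\sigma^{(\infty)})^2\leq\Bigl(\sum_{k\geq 0}2^{-k\alpha}\Bigr)\sum_{k\geq 0}2^{k\alpha}\,W_2(\sigma_k,\sigma_{k+1})^2,$$
and the first factor is a finite constant $c_\alpha$. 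Since $\sigma_k$ and $\sigma_{k+1}$ coincide off $\bigcup_{Q\in\TT\cap\DD_k(R)}Q$ and conserve mass on each such $Q$, the superadditivity $W_2(\sigma_k,\sigma_{k+1})^2\leq\sum_{Q\in\TT\cap\DD_k(R)}W_2(\sigma_k\rest Q,\sigma_{k+1}\rest Q)^2$ holds, and Lemma \ref{lemkr2} then gives $W_2(\sigma_k\rest Q,\sigma_{k+1}\rest Q)^2\lesssim\ell(Q)^2\,\|h_Q\|_{L^2(\sigma)}^2$. An algebraic identity expressing $c_P^Q$ as a linear combination of $\Delta_Q(\sigma-\tau)|_P$ and $\Delta_Q\sigma|_P$, combined with $m_P\sigma\approx m_Q\sigma\approx m_Q\tau$, yields
$$\|h_Q\|_{L^2(\sigma)}^2\lesssim \frac{1}{m_Q\sigma}\,\|\Delta_Q(\sigma-\tau)\|_{L^2}^2 + \frac{|m_Q\sigma-m_Q\tau|^2}{(m_Q\sigma)^3}\,\|\Delta_Q\sigma\|_{L^2}^2.$$
Using $2^{k\alpha}\ell(Q)^2=\ell(R)^\alpha\ell(Q)^{2-\alpha}$ for $Q\in\DD_k(R)$ and collapsing the double sum into a single sum over $Q\in\TT$ produces the first two terms of the claim.

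For $W_2(\sigma^{(\infty)},\tau)^2$, the two measures agree outside $\bigcup_{Q\in\SSS}Q$ and restrict to measures of the same total mass $\tau(Q)$ on each $Q\in\SSS$, so
$$W_2(\sigma^{(\infty)},\tau)^2\leq\sum_{Q\in\SSS}W_2(\sigma^{(\infty)}\rest Q,\tau\rest Q)^2\lesssim\sum_{Q\in\SSS}\ell(Q)^2\,\tau(Q),$$
by the elementary bound that two measures of equal mass supported in a set of diameter $D$ are at $W_2^2$-distance at most $D^2$ times their common mass. This gives the third term. The main obstacle will be the algebraic identity linking $c_P^Q$ to the Haar-type coefficients $\Delta_Q\sigma$ and $\Delta_Q(\sigma-\tau)$ with the correct normalization (so that the precise factors $1/m_Q\sigma$ and $1/(m_Q\sigma)^3$ appear), together with the justification of the iterative scheme and its martingale limit at infinite depth.
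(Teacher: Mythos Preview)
Your proposal is correct and follows essentially the same strategy as the paper's proof: iterate Lemma \ref{lemkr2} down the tree, use the Cauchy--Schwarz trick with geometric weights $2^{\alpha k}$ to sum the contributions, rewrite $h_Q=\frac{\sigma(Q)}{\tau(Q)}\Delta_Q^\sigma\tau$ in terms of $\Delta_Q(\sigma-\tau)$ and $\Delta_Q\sigma$ via the identity \rf{eqdf623}, and handle the stopping cubes by the crude diameter bound. The only cosmetic difference is that the paper composes the maps $U_Q$ from Lemma \ref{lemkr2} into a single transport $T=U_m\circ\cdots\circ U_1$ and applies the Cauchy--Schwarz step \emph{pointwise} to $|x-Tx|\leq\sum_j|V_{j-1}x-V_jx|$ before integrating, whereas you chain $W_2$ distances via the triangle inequality and apply Cauchy--Schwarz to the sequence $\{W_2(\sigma_k,\sigma_{k+1})\}_k$; the two are equivalent here. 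For the infinite-tree case, rather than invoking a ``martingale limit'' it is cleaner (and matches the paper) to truncate to $\TT\cap\bigcup_{j\leq m}\DD_j(R)$, obtain the estimate with the right-hand side independent of $m$, and let $m\to\infty$.
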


\begin{proof} For simplicity suppose first that $\bigcup_{Q\in\SSS}Q= R$ and that $\TT$ contains a finite number
of cubes (and thus all stopping cubes have side length uniformly bounded from below).
For $Q\in\DD(R)$, consider the functions $\Delta_Q^\sigma \tau$ defined above. 
Then, $\tau$ can be written as 
$$\tau = \sigma + \biggl(\sum_{Q\in\TT}\Delta_Q^\sigma \tau\biggr)\,\sigma + \sum_{Q\in\SSS}\biggl(\tau\rest Q -
\frac{\tau(Q)}{\sigma(Q)}\,\sigma\rest Q\biggr).$$
Observe that the measures
$\bigr(\Delta_Q^\sigma \tau\bigl)\,\sigma$, for $Q\in\TT$, have zero mean, as well as all the measures inside the last parenthesis above.

For $Q\in\TT$, consider a map $U_Q:Q\to Q$ such that
$$U_Q\#\biggl(\frac{\tau(Q)}{\sigma(Q)}\,\sigma\rest Q\biggr) = \biggl(\Delta_Q^\sigma\tau
 + \frac{\tau(Q)}{\sigma(Q)}\biggr)\,\sigma\rest Q,$$
or,
 equivalently, 
 $$U_Q\#\bigl(\sigma\rest Q\bigr) = \biggl(\frac{\sigma(Q)}{\tau(Q)}\,\Delta_Q^\sigma\tau
 + 1\biggr)\,\sigma\rest Q.$$
Since $\tau(Q)/\sigma(Q)\approx1$, by Lemma \ref{lemkr2} applied to $h=\frac{\sigma(Q)}{\tau(Q)}\,\Delta_Q^\sigma\tau$, we can take such map $U_Q$ so that
\begin{equation}\label{eqint51}
\int |U_Q x-x|^2\,d\sigma(x)\lesssim \|\Delta_Q^\sigma\tau\|_{L^2(\sigma)}^2\,\ell(Q)^2.
\end{equation}

For $j>0$, define
$U_j(x)=U_Q(x)$ if $x\in Q\in\DD_j(R)\cap\TT$, and
$U_j(x)=x$ if $x\in Q\in\DD_j(R)\setminus\TT$. 
Notice that, by definition, we have
$$U_j
\biggl(\,\sum_{Q\in\DD_j(R)\cap\TT} \frac{\tau(Q)}{\sigma(Q)}\,\sigma\rest Q\biggr) = \!
\sum_{Q\in\DD_j(R)\cap\TT} \biggl(\Delta_Q^\sigma\tau
 + \frac{\tau(Q)}{\sigma(Q)}\biggr)\,\sigma\rest Q =\!\!
\sum_{Q\in\DD_j(R)\cap\TT}\sum_{P\in\CH(Q)} \frac{\tau(P)}{\sigma(P)}\,\sigma\rest P.$$
Set also $U_0(x)=x$ and
consider the map
$$T= U_m\circ U_{m-1}\circ\cdots \circ U_1,$$
for $m$ big enough so that $\TT\subset \bigcup_{j=0}^m D_j(R)$.
Observe that
$$T\#\sigma = \sum_{Q\in\SSS}\frac{\tau(Q)}{\sigma(Q)}\,\sigma\rest Q =:\tau_0.$$
The measure $\tau_0$ should be considered as an approximation of $\tau$ at the scale of the stopping cubes.
Let $V_j = U_j\circ \ldots\circ U_0$, for $0\leq j\leq m$. We set
\begin{equation}\label{eqjj0}
|x-Tx|^2\leq \Bigl(\sum_{j=1}^m |V_{j-1}(x) - V_{j}(x)|\Bigr)^2 \lesssim 
\sum_{j=1}^m 2^{\alpha j}|V_{j-1}(x) - V_{j}(x)|^2,
\end{equation}
with a constant depending on $\alpha$.
As a consequence, using that $V_j = U_j\circ V_{j-1}$, 
$$W_2(\sigma,\tau_0)^2 \leq \int \sum_{j=1}^m 2^{\alpha j}|V_{j-1}x - V_{j}x|^2\,d\sigma(x)=
 \sum_{j=1}^m 2^{\alpha j}\int |x - U_jx|^2\,d V_{j-1}\#\sigma(x)
.$$
Recalling the definition of $U_j$, we get
$$\int |x - U_jx|^2\,d V_{j-1}\#\sigma(x) = \sum_{Q\in\DD_j(R)\cap\TT} \int |x - U_Q x|^2\,d V_{j-1}\#
\sigma(x).$$
For the cubes $Q$ in the last sum it turns out that $$V_{j-1}\#
\sigma\rest Q=\frac{\tau(Q)}{\sigma(Q)}\,\sigma\rest Q\qquad \text{and}\qquad\tau(Q)\approx \sigma(Q),$$
 and so we obtain
$$W_2(\sigma,\tau_0)^2 \lesssim \sum_{Q\in\TT} \frac{\ell(R)^\alpha}{\ell(Q)^\alpha}
\int |x - U_Q x|^2\,d\sigma(x).$$
From \rf{eqint51}, we infer that
\begin{equation}\label{eqdf6245}
W_2(\sigma,\tau_0)^2 \lesssim \sum_{Q\in\TT} \frac{\ell(R)^\alpha}{\ell(Q)^\alpha}\,
\|\Delta_Q^\sigma\tau\|_{L^2(\sigma)}^2\,\ell(Q)^2 = \sum_{Q\in\TT} 
\|\Delta_Q^\sigma\tau\|_{L^2(\sigma)}^2\,\ell(Q)^{2-\alpha}\,\ell(R)^\alpha.
\end{equation}

Now we wish to write $\Delta_Q^\sigma\tau$ in terms of $\Delta_Q(\sigma-\tau)$. Given $x\in P\in\CH(Q)$,
from the definition of $\Delta_Q^\sigma\tau$, we get
\begin{align}\label{eqdf623}
\Delta_Q^\sigma \tau(x) & = \frac{\tau(P)-\sigma(P)}{\sigma(P)} - \frac{\tau(Q)-\sigma(Q)}{\sigma(Q)}\\
& = \frac1{m_P\sigma}\biggl(\frac{\tau(P)-\sigma(P)}{m(P)} - \frac{\tau(Q)-\sigma(Q)}{m(Q)}\biggr) +
\biggl(\frac1{m_P\sigma} - \frac1{m_Q\sigma}\biggr)\frac{\tau(Q)-\sigma(Q)}{m(Q)} \nonumber\\
& =
\frac1{m_P\sigma}\,\Delta_Q(\sigma-\tau)(x) + 
\frac1{m_P\sigma\,m_Q\sigma}\,\Delta_Q\sigma(x)\,\frac{\sigma(Q)-\tau(Q)}{m(Q)}.\nonumber
\end{align}
Taking into account that, for $Q\in\DD(R)$ and $P\in\CH(Q)$, we have $m_P\sigma\approx m_Q\sigma$ (because of the 
assumptions on $\mu$ and $\vphi$) and since $\Delta_Q^\sigma\tau$ is a function that is constant on the cubes 
$P\in\CH(Q)$, we infer that $\|\Delta_Q^\sigma\tau\|_{L^2(\sigma)}^2\approx 
m_Q\sigma\,\|\Delta_Q^\sigma\tau\|_{L^2}^2$.
Thus, 
by \rf{eqdf623},
$$\|\Delta_Q^\sigma\tau\|_{L^2(\sigma)}^2\lesssim \frac1{m_Q\sigma}\,\|\Delta_Q(\sigma-\tau)\|_{L^2}^2
+ 
 \frac{|m_Q\sigma - m_Q\tau|^2}{(m_Q\sigma)^3}\,
\,\|\Delta_Q\sigma\|_{L^2}^2.$$
Plugging this estimate into \rf{eqdf6245}, we get
\begin{align*}
W_2(\sigma,\tau_0)^2  & \lesssim 
\sum_{Q\in \TT}\frac1{m_Q\sigma}\,\|\Delta_Q(\sigma-\tau)\|_{L^2}^2\,\ell(Q)^{2-\alpha}\ell(R)^\alpha\\
&\quad + \sum_{Q\in \TT} \frac{|m_Q\sigma - m_Q\tau|^2}{(m_Q\sigma)^3}\,
\,\|\Delta_Q\sigma\|_{L^2}^2\,\ell(Q)^{2-\alpha}\ell(R)^\alpha.
\end{align*}

To complete the proof of the lemma, it is enough to show that
\begin{equation}\label{eqw22}
W_2(\tau,\tau_0)^2\lesssim \sum_{Q\in \SSS} \ell(Q)^2\,\tau(Q).
\end{equation}
To check this, just take
$$\pi = \sum_{Q\in\SSS}\frac1{\tau(Q)}\,(\tau\rest Q \otimes \tau_0\rest Q).$$
Using that $\tau(Q)=\tau_0(Q)$  for all $Q\in\SSS$, it is easy to check that
$\pi$ has marginals $\tau$ and $\tau_0$. Therefore,
\begin{align*}
W_2(\tau,\tau_0)^2 & \leq \int |x-y|^2\,d\pi(x,y) =
\sum_{Q\in\SSS}\frac1{\tau(Q)}\,\int_{Q\times Q} |x-y|^2\,d\tau(x)\,d\tau_0(y)\\
& \leq 2\sum_{Q\in\SSS}\left(\int_{Q} |x-z_Q|^2\,d\tau(x) + \int_{Q} |y-z_Q|^2\,d\tau_0(y)\right)\lesssim \sum_{Q\in\SSS}\ell(Q)^2\tau(Q),
\end{align*}
which gives \rf{eqw22}. This ends the proof of the lemma 
under the assumption that $\TT$ contains finitely many cubes.

If $\TT$ has infinitely many cubes, then we apply the lemma to the tree  $\TT_m=\TT\cap\DD_m(R)$ 
for any fixed $m>0$, and we let $m\to\infty$.
\end{proof}

\begin{rem}\label{remkey}
It is very easy to check that the Lemmas \ref{lemkr0}, \ref{lemkr1}, \ref{lemkr2}, and \ref{lemkey} also hold if $\vphi\approx\chi_R$ and 
$\vphi$ is Lipschitz on $R$. In fact, the calculations much easier in this situation (specially the ones
in Lemma \ref{lemkr0}).
In the particular case where $\sigma=c\chi_R\,\LL^n$, where $\LL^n$ stands for the Lebesgue measure
and the constant $c$ is comparable to $1$,
Lemma \ref{lemkey} becomes
\begin{align}\label{eqkey777}
W_2(\sigma,\tau)^2& \lesssim 
\sum_{Q\in \TT}\|\Delta_Q \tau\|_{L^2}^2\,\ell(Q)^{2-\alpha}\ell(R)^\alpha
 + \sum_{Q\in \SSS} \ell(Q)^2\,\tau(Q),
\end{align}
still assuming that 
 $\tau(Q)\approx\sigma(Q)$ for all $Q\in \TT$.
\end{rem}


\vvv

\subsection{ Proof of Theorem \ref{teoloc}}
 
We have to prove that $W_2(\vphi\mu,a\vphi\nu)\lesssim  W_2(\mu,\nu).$
To this end, we may assume that
\begin{equation}\label{eqass9}
W_2(\mu,\nu)^2\leq c_8 \,\ell(R)^2\,\|\vphi\mu\|.
\end{equation}
Otherwise, the theorem follows easily, because
$$W_2(\vphi\mu,a\vphi\nu)^2\lesssim \ell(R)^2\,\|\vphi\mu\|\lesssim W_2(\mu,\nu)^2.$$
Moreover, it also easy to check that $\mu$ and $\nu$ can be assumed to be absolutely continuous with respect to Lebesgue 
measure, by an approximation argument. Indeed, consider a non negative $\CC^\infty$ radial function $\psi$ supported on $B(0,1)$ 
with $L^1$ norm equal to $1$, and for $t>0$ denote $\psi_t(x)=t^{-n}\psi(t^{-1}x)$. Consider the measures given
by the convolutions $\mu_t=\psi_t*\mu$ and $\nu_t=\psi_t*\nu$. They are absolutely continuous with respect to 
Lebesgue measure and moreover,  $\mu_t\to\mu$ and $\nu_t\to\nu$
weakly as $t\to0$. By Theorem 7.12 of \cite{Villani-ams}, it turns out that  
$W_2(\mu,\mu_t)\to 0$ and $W_2(\nu,\nu_t)\to 0$, and thus $W_2(\mu_t,\nu_t)\to W_2(\mu,\nu)$. By similar arguments, 
$W_2(\vphi\mu,a_t\vphi\mu_t)\to 0$ and $W_2(\vphi\nu,b_t\vphi\mu_t)\to 0$, where $a_t= \int\vphi d\mu
/\int\vphi d\mu_t$ and  $b_t= \int\vphi d\nu
/\int\vphi d\nu_t$. In particular, $a_t,b_t\to1$ as $t\to0$. Then we have
\begin{align*}
W_2(\vphi\mu,a\vphi\nu)& \leq 
W_2(\vphi\mu,a_t\vphi\mu_t)+ W_2(a_t\vphi\mu_t,a\,b_t\vphi\nu_t)+ W_2(a\,b_t\vphi\nu_t,a\vphi\nu)\\
& \lesssim
W_2(\vphi\mu,a_t\vphi\mu_t)+ W_2(\mu_t,\nu_t)+ W_2(a\,b_t\vphi\nu_t,a\vphi\nu),
\end{align*}
assuming that the theorem holds for absolutely continuous measures for the last inequality.
Letting $t\to0$, we infer that $W_2(\vphi\mu,a\vphi\nu)\lesssim W_2(\mu,\nu)$, which proves our claim.

So suppose that \rf{eqass9} holds and that $\mu$ and $\nu$ are absolutely continuous with respect to Lebesgue measure.
Let $G,T$ and $\wt\mu,\,\wt\nu$ be as in Lemmas \ref{lem11} and \ref{lem12}. We  write
$$W_2(\vphi\mu,a\vphi\nu) \leq W_2(\vphi\mu, a\,T\#\wt\nu) + W_2(a\,T\#\wt\nu,a\wt\nu) + 
W_2(a\wt\nu,a\vphi\nu).$$
Notice that, by Lemma \ref{lem11}, using also that $a\approx1$, we have
$$W_2(a\vphi\nu,a\wt\nu) \lesssim W_2(\mu,\nu).$$
Also,
$$W_2(a\,T\#\wt\nu,a\,\wt\nu)^2 \leq a\int |Tx-x|^2\,d\wt\nu 
\leq  c\int |Tx-x|^2\,d\nu = c\,W_2(\mu,\nu)^2,$$
where the second inequality follows from the fact that $\wt\nu = \wt a\,\chi_{G}\,\vphi\nu$, 
with $\wt a\lesssim1$ (see Lemma \ref{lem11}).
Thus, to prove the theorem it is enough to show that
\begin{equation}\label{eqd56}
W_2(\vphi\mu, a\,T\#\wt\nu) \lesssim W_2(\mu,\nu).
\end{equation}
To this end, we will apply Lemma \ref{lemkey} to $\sigma=\vphi\mu$ and 
$\tau= a\,T\#\wt\nu$ (observe that $\sigma(R) = a\vphi\nu(R) =a\wt\nu(R) = \tau(R)$). We construct a tree $\TT$ whose family $\SSS$ of stopping cubes is defined as follows:
\begin{align}\label{eqal51} 
Q\in \SSS\quad &\text{if $Q$ is a maximal cube from $\DD(R)$ such that } \tau(Q)\leq \delta\,\sigma(Q)
\end{align}
for some small constant $0<\delta<1$ to be chosen below.
That $Q$ is maximal means that there does not exist another cube $Q'\in\DD(R)$ which contains $Q$ satisfying the same estimate. It is easy to check that the cubes from $\SSS$ are disjoint. Also, 
from the previous definition, we infer that every $Q\in\TT$
satisfies
$ \tau(Q)> \delta\,\sigma(Q)$.
To apply Lemma \ref{lemkey} to $\sigma$, $\tau$ and to the tree $\TT$ we need to show that there exists some 
constant $c$ such that $\tau(Q)\leq
c\,\sigma(Q)$. This follows quite easily:
$$
\tau(Q) = a\,T\#\wt\nu(Q) = a\,\wt a\,T\#(\chi_G\,\vphi\,\nu)(Q)
=a\,\wt a\,\int\chi_Q(Tx)\,\chi_G(x)\, \vphi(x)\,d\nu(x).
$$
From Remarks \ref{remaaa} and \ref{rema41}, taking into account the assumption \rf{eqass9} we deduce
that $a\,\wt a\approx1$. On the other hand, for $x\in G$, we have $|x-Tx|\leq \ell(Q_x)$, where
$Q_x$ is the Whitney cube that contains $x$. This implies that $\dist(x,\partial R)\approx
\dist(Tx,\partial R)$ and thus $\vphi(x)\approx \vphi(Tx)$. Therefore,
\begin{align}\label{eqtauq1}
\tau(Q)& \approx\int\chi_Q(Tx)\,\chi_G(x)\, \vphi(Tx)\,d\nu(x)\\&
\leq \int\chi_Q(Tx)\, \vphi(Tx)\,d\nu(x) = \int\chi_Q(x)\, \vphi(x)\,dT\#\nu(x)=\sigma(Q),\nonumber
\end{align}
which proves our claim.

Now the key Lemma \ref{lemkey} tells us that
\begin{align}\label{eqde9}
W_2(\sigma,\tau)^2& \lesssim 
\sum_{Q\in \TT}\frac1{m_Q\sigma}\,\biggl(\|\Delta_Q(\sigma-\tau)\|_{L^2}
+ \frac{|m_Q\sigma - m_Q\tau|\,\|\Delta_Q\sigma\|_{L^2}}{m_Q\sigma}\biggr)^2
\,\ell(Q)\ell(R)\\
& \quad   
 + \sum_{Q\in \SSS} \ell(Q)^2\,\tau(Q),\nonumber
\end{align}
with constants depending on $\delta$.
We will estimate each of the terms on the right side of this inequality in separate lemmas. The first one deals with 
 $\sum_{Q\in \SSS} \ell(Q)^2\,\tau(Q)$.

\vvv

\begin{lemma}\label{lemstop}
If $\delta$ is chosen small enough, then
$$\sum_{Q\in \SSS} \ell(Q)^2\,\tau(Q)\lesssim W_2(\mu,\nu)^2.$$
\end{lemma}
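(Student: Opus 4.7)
The plan is to convert the stopping condition into a transport--cost bound via the optimal map $T$. Since $\tau(Q)\leq\delta\,\sigma(Q)$ for $Q\in\SSS$, choosing $\delta<1/2$ gives $\tau(Q)\leq\frac{\delta}{1-\delta}(\sigma(Q)-\tau(Q))$, so it suffices to prove
$$\sum_{Q\in\SSS}\ell(Q)^2\bigl(\sigma(Q)-\tau(Q)\bigr)\lesssim W_2(\mu,\nu)^2.$$
Using $\mu=T\#\nu$ together with $\tau=a\,\wt a\,T\#(\chi_G\vphi\nu)$, we have
$$\sigma(Q)-\tau(Q)=\int\chi_Q(Tx)\bigl[\vphi(Tx)-a\,\wt a\,\chi_G(x)\,\vphi(x)\bigr]\,d\nu(x),$$
and summing over the disjoint $Q\in\SSS$, with $\ell^{*}(y):=\sum_{Q\in\SSS}\ell(Q)\chi_Q(y)\leq\ell(R)\approx 1$, reduces the matter to a single integral on $\R^n$, which I split according to $x\in G$ or $x\in G^c$.

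For the $G^c$ part (where $Tx\in R$ since some $\chi_Q(Tx)=1$), a short Whitney--based case analysis yields $|x-Tx|\gtrsim\dist(Tx,\partial R)$: the case $|x-Tx|>\ell(Q_x)$ and $|x-Tx|\leq\ell(Q_{Tx})$ is impossible since it would force $\ell(Q_x)\approx\ell(Q_{Tx})$ by Whitney adjacency, contradicting the strict inequality; meanwhile, if $x\notin R$ then the segment $[x,Tx]$ crosses $\partial R$ and $|x-Tx|\geq\dist(Tx,\partial R)$ directly. Combined with the quadratic decay $\vphi(Tx)\lesssim\dist(Tx,\partial R)^2/\ell(R)^2$, this gives $\vphi(Tx)\lesssim|x-Tx|^2/\ell(R)^2$, so the $G^c$ contribution is at most
$$\int \ell^{*}(Tx)^2\,\frac{|x-Tx|^2}{\ell(R)^2}\,d\nu(x)\leq \int|x-Tx|^2\,d\nu(x)=W_2(\mu,\nu)^2.$$

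The $G$ contribution is the more delicate step. On $G$ the transport is local, $|x-Tx|\leq\ell(Q_x)\approx\dist(x,\partial R)$, so on the segment from $x$ to $Tx$ one has $\dist(\cdot,\partial R)\approx\ell(Q_x)$ and hence $|\vphi(Tx)-\vphi(x)|\lesssim\ell(Q_x)|x-Tx|/\ell(R)^2$; moreover $|1-a\,\wt a|\lesssim W_2(\mu,\nu)$ by Remarks \ref{remaaa} and \ref{rema41}. The main obstacle is that a naive triangle--inequality bound on each of these two pieces yields only an $O(W_2(\mu,\nu))$ estimate, and obtaining the sought $O(W_2(\mu,\nu)^2)$ requires a further cancellation. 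My plan is to exploit the global identity $\sigma(\R^n)=\tau(\R^n)$, which forces $(1-a\,\wt a)\int_G\vphi\,d\nu = \int[\vphi(x)-\vphi(Tx)]\,d\nu(x)-\int_{G^c}\vphi(Tx)\,d\nu(x)$, so that the linear--order error from the $(1-a\,\wt a)$ factor cancels against the linear--order Lipschitz error, leaving a genuinely quadratic remainder; this remainder is then handled by Cauchy--Schwarz against $\int|x-Tx|^2\,d\nu$ (using $\ell^{*}(Tx)\leq\ell(R)\approx 1$ and $\vphi\lesssim 1$) and by reusing the bound already derived for the $G^c$ integral.
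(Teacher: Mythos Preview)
Your $G^c$ argument is essentially correct and matches the paper's: once you know $\vphi(Tx)\lesssim |x-Tx|^2/\ell(R)^2$ for $x\in G^c$ with $Tx\in R$, summing gives the desired bound. (A small correction: the case $|x-Tx|>\ell(Q_x)$ and $|x-Tx|\le\ell(Q_{Tx})$ is \emph{not} impossible---Whitney adjacency only gives $\ell(Q_x)\ge\tfrac12\ell(Q_{Tx})$---but in that case $|x-Tx|>\ell(Q_x)\gtrsim\ell(Q_{Tx})\approx\dist(Tx,\partial R)$ anyway, so your conclusion stands.)

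The $G$ part, however, has a genuine gap. After summing you must control
\[
\int_G \ell^{*}(Tx)^2\,\bigl[\vphi(Tx)-a\,\wt a\,\vphi(x)\bigr]\,d\nu(x),
\]
and the global identity $\sigma(\R^n)=\tau(\R^n)$ you invoke only forces cancellation in the \emph{unweighted} integral $\int_G[\vphi(Tx)-a\,\wt a\,\vphi(x)]\,d\nu$. The weight $\ell^{*}(Tx)^2$ (equivalently, the indicator of $\bigcup_{Q\in\SSS}Q$) varies with $x$, so the two linear-order pieces $\int_G\ell^{*}(Tx)^2[\vphi(Tx)-\vphi(x)]\,d\nu$ and $(1-a\wt a)\int_G\ell^{*}(Tx)^2\vphi(x)\,d\nu$ do \emph{not} cancel against each other; each remains of order $W_2(\mu,\nu)$, and neither Cauchy--Schwarz nor the global identity upgrades this to $W_2(\mu,\nu)^2$. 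Passing to $\sigma(Q)-\tau(Q)$ is precisely what introduces this obstruction.

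The paper sidesteps the issue entirely. Instead of $\tau(Q)$ it works with the intermediate quantity
\[
I(Q)=\int_G \chi_Q(Tx)\,\vphi(Tx)\,d\nu(x),
\]
which satisfies $\tau(Q)\approx I(Q)$ because on $G$ one has $\vphi(x)\approx\vphi(Tx)$ and $a\,\wt a\approx 1$ (this is \rf{eqtauq1}). The crucial point is that $\sigma(Q)-I(Q)=\int_{G^c}\chi_Q(Tx)\vphi(Tx)\,d\nu$ involves \emph{only} $G^c$; there is no $G$ contribution to control. Then $\tau(Q)\le\delta\sigma(Q)$ combined with $I(Q)\lesssim\tau(Q)$ gives $I(Q)\lesssim\sigma(Q)-I(Q)$ for $\delta$ small, hence $\tau(Q)\lesssim\int_{G^c}\chi_Q(Tx)\vphi(Tx)\,d\nu$, and one finishes exactly as in your $G^c$ step. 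The moral: compare $\tau(Q)$ to $\sigma(Q)-I(Q)$ rather than to $\sigma(Q)-\tau(Q)$.
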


\begin{proof}
We will show that 
\begin{equation}\label{eqcla73}
\tau\Bigl(\,\bigcup_{Q\in\SSS} Q\Bigr)\lesssim \frac{W_2(\mu,\nu)^2}{\ell(R)^2}.
\end{equation}
Clearly, the lemma follows from this estimate.

For a given $Q\in\SSS$, notice that the first estimate in \rf{eqtauq1} says that
$$\tau(Q) \approx \int\chi_Q(Tx)\,\chi_G(x)\, \vphi(Tx)\,d\nu(x)=:I(Q),$$
while the last equality in \rf{eqtauq1} states that
$$\sigma(Q) = \int\chi_Q(Tx)\, \vphi(Tx)\,d\nu(x).$$
Therefore, if $\delta$ is chosen small enough, then $I(Q)\leq \sigma(Q)/2$, and thus
$$\tau(Q)\approx I(Q)\leq \sigma(Q) - I(Q) = \int\chi_Q(Tx)\, \vphi(Tx) \,\bigl(1-\chi_G(x)\bigr)\,d\nu(x).$$
Summing over all $Q\in \SSS$, we obtain
\begin{align*}
\tau\Bigl(\,\bigcup_{Q\in\SSS} Q\Bigr)&\lesssim
\int\chi_R(Tx)\, \vphi(Tx) \,\bigl(1-\chi_G(x)\bigr)\,d\nu(x)\\&
= \int_R \vphi(x) \,d(T\#\nu - T\#\chi_G\nu)(x)
.\nonumber
\end{align*}
Recall now that $T\#\nu=\mu$ and $\wt\mu = \wt b\,\vphi\bigl(T\#(\chi_G\,\nu)\bigr)$ (see Lemma \ref{lem12}). So the preceding inequality says that
$$\tau\Bigl(\,\bigcup_{Q\in\SSS} Q\Bigr)\lesssim \|\vphi\mu- {\wt b\,}^{-1}\,\wt\mu\|.$$
Hence, the claim \rf{eqcla73} follows from this inequality and Remark \ref{remb41}.
\end{proof}

\vvv
To deal with the first sum on the right of \rf{eqde9}, first we will estimate the $L^2$ norm of $\Delta_P(\sigma-\tau)$. In the rest of this subsection, we denote by $S$ the map such that $S\#\mu=\nu$ which
minimizes $W_2(\mu,\nu)$. Recall that $S\circ T=Id$ $\nu$-a.e. and $T\circ S=Id$ $\mu$-a.e.
Also,
 $\{Q_i\}_{i\in I}$ stands for a Whitney decomposition of $\inter{R}$, as in Lemma \ref{lemwhitney}.

\begin{lemma}\label{lemdelta}
Assume that \rf{eqass9} holds and denote $\eta=\vphi\,T\#(\chi_{G^c}\,\nu)$.
For $P\in\TT$, we have
\begin{equation}\label{eqrep92}
\|\Delta_P(\sigma-\tau)\|_{L^2}\lesssim \frac{\eta(P)}{\ell(P)^{n/2}}
+ \frac{W_2(\mu,\nu)}{\ell(P)^{n/2}\,
  \ell(R)^{1+n/2}}\,
\int_{P\cap S^{-1}(G)} \vphi\circ S\,d\mu + E(P),
\end{equation}
where $E(P)$ is as follows.
If $P$ is contained in some Whitney cube $Q_P$ from the family $\{Q_i\}_{i\in I}$, then
$$E(P) = \frac{\ell(Q_P)}{\ell(R)^2}\,\|Sx-x\|_{L^p(\mu\rest P)}.$$
On the other hand, if $P$ is not contained in any Whitney cube from the family $\{Q_i\}_{i\in I}$,
then
$$E(P)= \biggl(\,
\sum_{i:3Q_i\cap P\neq\varnothing}
 \frac{\ell(Q_i)^2}{\ell(R)^4}\,\|Sx-x\|_{L^2(\mu\rest3Q_i)}^2\biggr)^{1/2}.$$
\end{lemma}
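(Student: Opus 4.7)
The plan is to bound each Haar coefficient $\langle\sigma-\tau,h_P^\ve\rangle$, using that $\|\Delta_P(\sigma-\tau)\|_{L^2}^2=\sum_{\ve\in\EE}|\langle\sigma-\tau,h_P^\ve\rangle|^2$ with $\#\EE$ a dimensional constant. First, I would derive a clean formula for this pairing from the mass-transport structure. Applying $T\#\nu=\mu$ gives $\int\psi\,d\sigma=\int\psi(Tx)\vphi(Tx)\,d\nu(x)$; splitting into $x\in G$ and $x\in G^c$ and changing variables on the $G$-piece via $y=Tx$ (using $S\circ T=\mathrm{Id}$ $\nu$-a.e.\ and $S\#\mu=\nu$), one obtains
\begin{equation*}
\int\psi\,d(\sigma-\tau)=\int\psi\,d\eta+\int_{S^{-1}(G)}\psi(y)\bigl(\vphi(y)-a\wt a\,\vphi(Sy)\bigr)\,d\mu(y),
\end{equation*}
where the $G^c$-piece of $\sigma$ produces the $\eta$-integral by the very definition of $\eta$, and the $G$-piece combines with $\tau=a\wt a\,T\#(\chi_G\vphi\nu)$.

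Setting $\psi=h_P^\ve$ and splitting $\vphi-a\wt a\,\vphi\circ S=(\vphi-\vphi\circ S)+(1-a\wt a)\vphi\circ S$, the bound for the pairing decomposes into three pieces. The $\eta$-piece is immediate: $|\int h_P^\ve\,d\eta|\leq\|h_P^\ve\|_\infty\,\eta(P)\lesssim \eta(P)/\ell(P)^{n/2}$. The $(1-a\wt a)$-piece is handled by the $L^2$-version of Kantorovich, $|\int\vphi\,d\mu-\int\vphi\,d\nu|\leq\|\nabla\vphi\|_{L^2(\mu)}W_2(\mu,\nu)$; combined with the gradient bound of $\GZ(R)$ (yielding $\|\nabla\vphi\|_{L^2(\mu)}\lesssim\ell(R)^{n/2-1}$), with $\int\vphi\,d\nu\approx\ell(R)^n$, and with Lemma \ref{lem11} controlling $|\wt a-1|$, one obtains $|1-a\wt a|\lesssim W_2(\mu,\nu)/\ell(R)^{1+n/2}$. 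Multiplying by $\|h_P^\ve\|_\infty\int_{P\cap S^{-1}(G)}\vphi\circ S\,d\mu$ produces the second term in the claimed bound.

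The third piece produces $E(P)$, and its proof rests on the geometric fact that for $y\in S^{-1}(G)$, setting $x=Sy$, the defining property of $G$ yields $|y-Sy|=|x-Tx|\leq\ell(Q_x)=\ell(Q_{Sy})$, forcing $Sy\in 3Q_y$ and $\ell(Q_{Sy})\approx\ell(Q_y)$. The segment $[Sy,y]$ then lies in a region where $\dist(\cdot,\partial R)\approx\ell(Q_y)$, so the gradient bound in $\GZ(R)$ gives
\begin{equation*}
|\vphi(y)-\vphi(Sy)|\;\lesssim\;\frac{\ell(Q_y)}{\ell(R)^2}\,|y-Sy|.
\end{equation*}
If $P\subset Q_P$, then $\ell(Q_y)\equiv\ell(Q_P)$ on $P$ and a single Cauchy--Schwarz step against $\mu(P)^{1/2}\approx\ell(P)^{n/2}$ exactly cancels the $\|h_P^\ve\|_\infty\approx\ell(P)^{-n/2}$ factor, yielding the first form of $E(P)$. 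If $P$ straddles several Whitney cubes, I would split the integration region into the pieces $P\cap Q_i$, bound each by $\ell(P)^{-n/2}(\ell(Q_i)/\ell(R)^2)\|y-Sy\|_{L^1(\mu\rest 3Q_i)}$ (the $3Q_i$ absorbing both $y$ and $Sy$), and apply Cauchy--Schwarz across the sum, using $\sum_{3Q_i\cap P\neq\varnothing}\mu(3Q_i)\lesssim\mu(P)\lesssim\ell(P)^n$ by the bounded-overlap property of the Whitney decomposition.

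The main obstacle will be precisely this last case: converting a sum of $L^1$-type bounds on $y-Sy$ into an $L^2$-type bound is what forces the slightly awkward second form of $E(P)$, and the Cauchy--Schwarz across the Whitney pieces must be balanced carefully against the $\ell(P)^{-n/2}$ loss from the Haar wavelet. The remaining ingredients---the basic pushforward identity, the $L^2$-Kantorovich inequality, and the transverse gradient estimate for $\vphi\in\GZ(R)$---are routine once in place.
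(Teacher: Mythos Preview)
Your approach is essentially the paper's own proof, just written on the $\mu$-side (integrating in $y$ with $x=Sy$) rather than on the $\nu$-side (integrating in $x$ with $y=Tx$); the three-term decomposition into an $\eta$-piece, a $(\vphi-\vphi\circ S)$-piece, and a $(1-a\wt a)$-piece matches exactly the paper's $I_1,I_2,I_3$, and the handling of each is the same. One small correction: the ``$L^2$-Kantorovich'' inequality $|\int\vphi\,d\mu-\int\vphi\,d\nu|\leq\|\nabla\vphi\|_{L^2(\mu)}W_2(\mu,\nu)$ is not correct as stated (the mean-value point need not lie where $\mu$ sits), but you only need $|1-a|\lesssim\|\nabla\vphi\|_\infty W_1(\mu,\nu)\leq\|\nabla\vphi\|_\infty W_2(\mu,\nu)$ together with $\|\nabla\vphi\|_\infty\lesssim 1/\ell(R)$ and $\ell(R)\approx 1$, which is exactly what the paper uses via Remark~\ref{remaaa}.
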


\begin{proof}
Let $h^\ve_P$ be the Haar wavelet associated to $P$ and $\ve\in\EE$. Then we have
$$\|\Delta_P(\sigma-\tau)\|_{L^2} \leq 
\sum_{\ve\in \EE} \bigl|\langle \sigma-\tau,\,h^\ve_P\rangle\bigr|=\sum_{\ve\in \EE}
\left| \int  h^\ve_P\,d(\sigma-\tau)\right|.$$
Recall that $\sigma =\vphi\mu = \vphi\,T\#\nu$ and $\tau= a\,T\#\wt\nu = a
\,T\#(\wt a\,\chi_{G}\,\vphi\nu)$. So, for each $\ve\in\EE$, the last integral equals
\begin{align}\label{eqde45}
\int  h^\ve_P\,d[\vphi\,T\#\nu-a\,T\#(\wt a\,\chi_{G}\,\vphi\nu)] & =
\int h^\ve_P(Tx)\, \bigl[\vphi(Tx) - a\,\wt a\,\vphi(x)\chi_{G}(x)\bigr]\,d\nu(x)\\
& = \int_{G^c} h^\ve_P(Tx)\, \vphi(Tx) \,d\nu(x) \nonumber\\
& \quad + \int_G h^\ve_P(Tx)\, \bigl[\vphi(Tx) - \vphi(x)\bigr]\,d\nu(x)\nonumber\\
& \quad + (1-a\wt a)\int_G h^\ve_P(Tx)\, \vphi(x)\,d\nu(x) =: I_1 + I_2 + I_3. \nonumber
\end{align}

Concerning $I_1$, using again that $\|h_p^\ve\|_\infty\lesssim\ell(P)^{-n/2}$ and $\eta=\vphi\,T\#(\chi_{G^c}\,\nu)$, we get
\begin{equation}\label{eqi1}
|I_1| \lesssim{\ell(P)^{-n/2}}
 \int_{G^c} \chi_P(Tx)\,\vphi(Tx) \,d\nu(x) = \frac{\eta(P)}{\ell(P)^{n/2}}.
\end{equation}
To deal with $I_2$,
recall that, for $x\in Q_i\cap G$, we have $|x-Tx|\leq \ell(Q_i)$, and thus $Tx\in 3Q_i$ and
$$|\vphi(Tx) - \vphi(x)|\leq \|\nabla\vphi\|_{\infty,3Q_i}\,|Tx-x|.$$
Using that
$\|\nabla\vphi\|_{\infty,3Q_i}\lesssim\ell(Q_i)/\ell(R)^2,$
we get
\begin{equation}\label{eqdi9}
|I_2|\lesssim \sum_{i\in I} \frac{\ell(Q_i)}{\ell(R)^2}
\int_{Q_i\cap G}  |h^\ve_P(Tx)|\,|Tx - x|\,d\nu(x).
\end{equation}

Now we distinguish the two cases in the statement of the lemma, according to whether   $P$ is contained in 
some square $Q_i$, $i\in I$,  or not.
In the first case we write $P\in\TT_a$, and in the second $P\in\TT_b$. 
Suppose first that $P\in\TT_a$, and denote by $Q_P$ the square $Q_i$, $i\in I$, that contains $P$.
Thus, if for $x\in Q_i\cap G$, $h^\ve_P(Tx)\neq0$, then  
$Tx\in P\subset Q_P$ and thus $x\in 3Q_P$ (since $|x-Tx|
\leq\ell(Q_P)$). As a consequence, $Q_i\cap 3Q_P\neq \varnothing$, and there is bounded number of such cubes 
$Q_i$. Moreover, for them we have $\ell(Q_i)\approx\ell(Q_P)$. Thus we obtain
\begin{equation}\label{eqi2}
|I_2|\lesssim \sum_{i\in I:Q_i\cap 3Q_P\neq\varnothing}
\frac{\ell(Q_P)}{\ell(R)^2}\,
\|h^\ve_P\circ T\|_{L^2(\nu)}\,\|(Tx-x)\chi_{T^{-1}(P)\cap G}\|_{L^2(\nu)}.
\end{equation}
We have
$$\|h^\ve_P\circ T\|_{L^2(\nu)} = \|h^\ve_P\|_{L^2(\mu)}\lesssim  1,$$
since $\|h^\ve_P\|_{L^2}=1$ and $\mu=f(x)\,dx$ on $R$, with $f(x)\leq c_4$.
 Therefore, using also that $\nu = S\#\mu$, we get
$$|I_2|\lesssim\frac{\ell(Q_P)}{\ell(R)^2}\,\|(Tx-x)\chi_{T^{-1}(P)}\|_{L^2(\nu)}
=\frac{\ell(Q_P)}{\ell(R)^2}\,\|(x-Sx)\chi_{P}\|_{L^p(\mu)}.$$

Suppose now that there does not exist any cube $Q_i$, $i\in I$, which contains $P$. Notice that if, for some $i
\in I$, the integral on the right side of \rf{eqdi9} does not vanish, then there exists some $x\in Q_i\cap G$
such that $Tx\in P$. From the fact that $x\in G$, we deduce that $|x-Tx|\leq \ell(Q_i)$, and
thus $Tx\in 3Q_i$. As a consequence, $P\cap 3Q_i\neq\varnothing$.
Therefore,
\begin{align}\label{eqal82}
|I_2| & \lesssim \sum_{i:3Q_i\cap P\neq\varnothing}
 \frac{\ell(Q_i)}{\ell(R)^2}
\int_{Q_i\cap G}  |h^\ve_P(Tx)|\,|Tx - x|\,d\nu(x)\\
& \lesssim \ell(P)^{-n/2}
\sum_{i:3Q_i\cap P\neq\varnothing}
 \frac{\ell(Q_i)}{\ell(R)^2}
\int_{Q_i\cap G} |Tx - x|\,d\nu(x),\nonumber
\end{align}
since $\|h^\ve_P\|_\infty\lesssim\ell(P)^{-n/2}$. 
Using that $\nu = S\#\mu$ and that $TSx=x$ $\mu$-a.e., we have
$$\int_{Q_i\cap G} |Tx - x|\,d\nu(x) = 
\int\chi_{Q_i\cap G}(Sx)\, |x - Sx|\,d\mu(x).$$
Observe that for $\mu$-a.e $x$ such that $Sx\in G\cap Q_i$, 
$$|Sx-x| = |Sx - TSx|\leq  \ell(Q_{Sx})=\ell(Q_i).$$
Thus, $x\in 3Q_i$, and so we infer that
$$\int_{Q_i\cap G} |Tx - x|\,d\nu(x) \leq
\int_{3Q_i} |x - Sx|\,d\mu(x).$$

From \rf{eqal82} and the preceding estimate, by Cauchy-Schwartz, we obtain 
\begin{align*}
|I_2| &\lesssim \ell(P)^{-n/2}
\sum_{i:3Q_i\cap P\neq\varnothing}
 \frac{\ell(Q_i)\,\mu(3Q_i)^{1/2}}{\ell(R)^2}\,\|Sx-x\|_{L^2(\mu\rest3Q_i)}\\
& \leq \ell(P)^{-n/2}\biggl(\,
\sum_{i:3Q_i\cap P\neq\varnothing}
 \frac{\ell(Q_i)^2}{\ell(R)^4}\,\|Sx-x\|_{L^2(\mu\rest3Q_i)}^2\biggr)^{1/2}
 \biggl(\,
\sum_{i:3Q_i\cap P\neq\varnothing}
\mu(3Q_i)\biggr)^{1/2}\nonumber.
 \end{align*}
Taking into account 
$ \sum_{i:3Q_i\cap P\neq\varnothing}
\mu(3Q_i)\lesssim \ell(P)^n$, we deduce
$$|I_2| \lesssim \biggl(\,\sum_{i:3Q_i\cap P\neq\varnothing}
 \frac{\ell(Q_i)^2}{\ell(R)^4}\,\|Sx-x\|_{L^2(\mu\rest3Q_i)}^2\biggr)^{1/2}.$$

Finally we consider the term $I_3$ in \rf{eqde45}. Recall that in Remark \ref{remaaa} we saw that
$|1-a|\lesssim W_2(\mu,\nu) \approx W_2(\mu,\nu)/\ell(R)^{1+n/2}$. On the other hand, from \rf{eqaat} we get
$$|1- \wt a| \leq \frac{2}{\vphi\nu(\R^n)\,\ell(R)^{2}}\,W_2(\mu,\nu)^2\lesssim 
\frac{1}{\ell(R)^{n+2}}\,W_2(\mu,\nu)^2 \leq \frac{1}{\ell(R)^{1+n/2}}W_2(\mu,\nu),$$
where the last inequality follows from the estimate \rf{eqass9}. Therefore,
$$|1-a\wt a|\leq |1-a| + a\,|1-\wt a| \lesssim \frac{1}{\ell(R)^{1+n/2}}W_2(\mu,\nu).$$
Then, using that $\nu=S\#\mu$,
\begin{align*}
|I_3| & \lesssim \frac{W_2(\mu,\nu)}{\ell(R)^{1+n/2}} \left|\int_G h^\ve_P(Tx)\, \vphi(x)\,d\nu(x)\right|  \\
& = \frac{W_2(\mu,\nu)}{\ell(R)^{1+n/2}}
  \left|\int_{S^{-1}(G)} h^\ve_P(x)\, \vphi(Sx)\,d\mu(x) \right|\leq \frac{W_2(\mu,\nu)}{\ell(P)^{n/2}\,
  \ell(R)^{1+n/2}}\,
\int_{P\cap S^{-1}(G)} \vphi\circ S\,d\mu. \nonumber
\end{align*}
\end{proof}

\vvv

In next lemma we estimate the term $|m_P\sigma - m_P\tau|$.

\begin{lemma}\label{lemdiffe}
Assume that \rf{eqass9} holds and let $\eta=\vphi\,T\#(\chi_{G^c}\,\nu)$.
For $P\in\TT$, we have
$$\frac{|m_P\sigma - m_P\tau|\,\|\Delta_P\sigma\|_{L^2}}{m_P\sigma}\lesssim
\frac{\eta(P)}{\ell(P)^{n/2}}
+ \frac{W_2(\mu,\nu)}{\ell(P)^{n/2}\,
  \ell(R)^{1+n/2}}\,
\int_{P\cap S^{-1}(G)} \vphi\circ S\,d\mu + E(P),
$$
with $E(P)$ defined as in Lemma \ref{lemdelta}.
\end{lemma}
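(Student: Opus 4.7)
The plan is to reduce the desired estimate to two ingredients: a trivial bound for $\|\Delta_P\sigma\|_{L^2}/m_P\sigma$ coming from the doubling of $\sigma=\vphi\mu$ on $R$, together with a direct estimate for $|\sigma(P)-\tau(P)|$ obtained by repeating the argument of Lemma~\ref{lemdelta} with the Haar wavelet $h_P^\ve$ replaced by the indicator $\chi_P$.

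For the first ingredient, since $\mu$ has density bounded above and below on $R$ and $\vphi\in \GZ(R)$, the measure $\sigma$ is doubling on $R$ (as was noted after Lemma~\ref{lemkr2} and used in the proof of Lemma~\ref{lemkey}). In particular, for every $P\in\DD(R)$ and every $P'\in\CH(P)$ we have $m_{P'}\sigma\approx m_P\sigma$, and therefore
$$\|\Delta_P\sigma\|_{L^2}^2=\sum_{P'\in\CH(P)}|m_{P'}\sigma-m_P\sigma|^2\,m(P')\lesssim (m_P\sigma)^2\,m(P),$$
which yields $\|\Delta_P\sigma\|_{L^2}/m_P\sigma\lesssim \ell(P)^{n/2}$.

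For the second ingredient, I would write $|m_P\sigma-m_P\tau|=|\sigma(P)-\tau(P)|/m(P)$ and expand $\sigma(P)-\tau(P)=\int \chi_P\,d(\sigma-\tau)$ exactly as in the decomposition \rf{eqde45}, using $\sigma=\vphi\,T\#\nu$ and $\tau=a\wt a\,T\#(\chi_G\vphi\nu)$. This splits the integral as $I_1+I_2+I_3$ with the same integrands as in the proof of Lemma~\ref{lemdelta} but with $\chi_P$ in place of $h_P^\ve$. Each of the three terms can be bounded by repeating the argument there; the only change is that one gains an extra factor of $\ell(P)^{n/2}$ throughout, since $\|\chi_P\|_\infty=1$ instead of $\|h_P^\ve\|_\infty\lesssim\ell(P)^{-n/2}$ and, correspondingly, $\|\chi_P\|_{L^2(\mu)}\approx\ell(P)^{n/2}$ instead of $\|h_P^\ve\|_{L^2(\mu)}\lesssim 1$ (the latter being relevant for the Cauchy--Schwarz step in Case~1 of the estimate for $I_2$). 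The outcome is
$$|\sigma(P)-\tau(P)|\lesssim \eta(P)+\ell(P)^{n/2}E(P)+\frac{W_2(\mu,\nu)}{\ell(R)^{1+n/2}}\int_{P\cap S^{-1}(G)}\vphi\circ S\,d\mu.$$

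Dividing this by $m(P)\approx\ell(P)^n$ to obtain $|m_P\sigma-m_P\tau|$ and then multiplying by the bound $\|\Delta_P\sigma\|_{L^2}/m_P\sigma\lesssim\ell(P)^{n/2}$ from the first step produces the claimed inequality. I do not anticipate any genuine obstacle here: the work is essentially bookkeeping of powers of $\ell(P)^{n/2}$ as one moves from the $L^2$-normalized wavelet $h_P^\ve$ to the $L^\infty$-normalized indicator $\chi_P$, while reusing the estimate $|1-a\wt a|\lesssim W_2(\mu,\nu)/\ell(R)^{1+n/2}$ already established in the proof of Lemma~\ref{lemdelta}.
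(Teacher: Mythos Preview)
Your proposal is correct and follows essentially the same route as the paper. The only cosmetic difference is that the paper packages the indicator as $w_P=\chi_P/\ell(P)^{n/2}$ so that it satisfies the same three properties used for $h_P^\ve$ (support in $P$, $\|\cdot\|_\infty\lesssim\ell(P)^{-n/2}$, $\|\cdot\|_{L^2}\approx1$), whereas you use $\chi_P$ directly and track the extra $\ell(P)^{n/2}$ factor by hand; these are equivalent bookkeeping choices.
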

\vv

\begin{proof} 
From the doubling properties of $\sigma$, it follows that $\|\Delta_P\sigma\|_\infty\lesssim m_P\sigma$, and
thus
$$\frac{|m_P\sigma - m_P\tau|\,\|\Delta_P\sigma\|_{L^2}}{m_P\sigma}\lesssim 
|m_P\sigma - m_P\tau|\,\ell(P)^{n/2} = \bigl|\langle \sigma-\tau,\,w_P\rangle\bigr|,$$
with $w_P= \chi_P/\ell(P)^{n/2}$.

Now the arguments to finish the proof are very similar to the ones in the preceding lemma.
 Instead of 
$\langle \sigma-\tau,\,h^\ve_P\rangle$, we have to estimate the term
$\langle \sigma-\tau,\,w_P\rangle$.
So, all we have to do is to replace $h^\ve_P$ by $w_P$ in the preceding proof. Indeed, notice that 
the cancellation property of $h_P^\ve$ was not necessary in the Lemma \ref{lemdelta}. We only used that $\supp h^\ve_P\subset P$, that  $\|h^\ve_P\|_\infty\lesssim\ell(P)^{-n/2}$, and that
$\|h^\ve_P\|_{L^2}=1$. These estimates also hold for $w_P$, and so we deduce that \rf{eqrep92} also
holds replacing its left side by $|m_P\sigma - m_P\tau|\,\|\Delta_P\sigma\|_{L^2}/{m_P\sigma}$.
\end{proof}

\vvv

Our next objective consists in estimating the first sum on the right side of \rf{eqde9}. 
By Lemmas \ref{lemdelta} and \ref{lemdiffe}, we have
\begin{align}\label{eqde10}
\sum_{P\in \TT}\frac1{m_P\sigma}\,\biggl(\|&\Delta_P(\sigma-\tau)\|_{L^2}
+ \frac{|m_P\sigma - m_P\tau|\,\|\Delta_P\sigma\|_{L^2}}{m_P\sigma}\biggr)^2
\,\ell(P)\ell(R)
 \\
& \lesssim \sum_{P\in \TT} \frac{\ell(P)\ell(R)}{m_P\sigma} \,  E(P)^2 +
\sum_{P\in \TT} 
\frac{\ell(R)\,\eta(P)^2}{\ell(P)^{n-1}\,m_P\sigma}  \nonumber\\
&\quad + W_2(\mu,\nu)^2\sum_{P\in \TT}   
\frac{1}{m_P\sigma\,\ell(P)^{n-1}\,\ell(R)^{n+1}}\,
\left(\int_{P\cap S^{-1}(G)} \vphi\circ S\,d\mu\right)
^2\nonumber\\
  & =: S_1 + S_2 + S_3,\nonumber
\end{align}
where $\eta=\vphi\,T\#(\chi_{G^c}\,\nu)$.
\vvv

In the next lemma we consider the sum $S_1$:

\begin{lemma}\label{lems1}
We have
$$S_1 = \sum_{P\in \TT} \frac{\ell(P)\ell(R)}{m_P\sigma} \,  E(P)^2 \lesssim W_2(\mu,\nu)^2.$$
\end{lemma}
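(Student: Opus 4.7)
The plan is to split $\TT$ into two subfamilies according to the relative position of each cube with respect to the Whitney decomposition $\{Q_i\}_{i\in I}$ of $\inter{R}$. I set
$$\TT_a = \bigl\{P\in\TT: P\subset Q_i \text{ for some } i\in I\bigr\},\qquad \TT_b = \TT\setminus \TT_a,$$
which matches exactly the two cases appearing in the definition of $E(P)$ in Lemma~\ref{lemdelta}, so I would write $S_1 = S_1^a + S_1^b$ and bound each piece separately. Note that any $P\in\TT_b$ must properly contain some Whitney cube $Q_j$, and then $\ell(Q_j)\approx\dist(Q_j,\partial R)\geq \dist(P,\partial R)$ together with $\ell(Q_j)\leq\ell(P)$ forces $\dist(P,\partial R)\lesssim\ell(P)$.

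For the $\TT_a$ piece, on any Whitney cube $Q_P\supset P$ the distance to $\partial R$ is $\approx\ell(Q_P)$, hence $\vphi(x)\approx\ell(Q_P)^2/\ell(R)^2$ on $P$; together with the density bounds on $\mu$ this gives $m_P\sigma\approx\ell(Q_P)^2/\ell(R)^2$. Substituting the formula for $E(P)$ and cancelling,
$$\frac{\ell(P)\ell(R)}{m_P\sigma}\,E(P)^2\;\approx\;\frac{\ell(P)}{\ell(R)}\,\|Sx-x\|_{L^2(\mu\rest P)}^2.$$
Grouping the sum $S_1^a$ by generations, the cubes in $\DD(R)$ of fixed side length $2^{-k}\ell(R)$ are pairwise disjoint, so $\sum_{P:\ell(P)=2^{-k}\ell(R)}\|Sx-x\|_{L^2(\mu\rest P)}^2\leq W_2(\mu,\nu)^2$, and the remaining geometric series $\sum_k 2^{-k}$ converges, giving $S_1^a\lesssim W_2(\mu,\nu)^2$.

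For the $\TT_b$ piece, I would interchange the order of summation to write
$$S_1^b \;=\; \sum_i \frac{\ell(Q_i)^2}{\ell(R)^4}\,\|Sx-x\|_{L^2(\mu\rest 3Q_i)}^2\sum_{\substack{P\in\TT_b\\ 3Q_i\cap P\neq\varnothing}}\frac{\ell(P)\ell(R)}{m_P\sigma}.$$
The first step is to establish the two-sided estimate $m_P\sigma\approx\ell(P)^2/\ell(R)^2$ for $P\in\TT_b$: the upper bound is immediate from $\vphi\lesssim\dist(\cdot,\partial R)^2/\ell(R)^2\lesssim\ell(P)^2/\ell(R)^2$ on $P$, and the lower bound uses that a positive proportion of $P$ lies at distance $\gtrsim\ell(P)$ from $\partial R$ (possible precisely because $\dist(P,\partial R)\lesssim\ell(P)$, so $P$ borders $\partial R$ rather than being engulfed by it). Next, if $P\in\TT_b$ meets $3Q_i$ then $P$ must contain either $Q_i$ itself or a Whitney cube adjacent to $Q_i$, whose side length is comparable to $\ell(Q_i)$ by property~(iii) of Lemma~\ref{lemwhitney}; hence $\ell(P)\gtrsim\ell(Q_i)$ and at each dyadic scale there are only boundedly many such $P$. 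The inner sum therefore telescopes into a geometric series
$$\sum_{k\geq 0}\frac{2^k\ell(Q_i)\cdot\ell(R)^3}{(2^k\ell(Q_i))^2}\;\lesssim\;\frac{\ell(R)^3}{\ell(Q_i)},$$
and plugging this back, combined with $\ell(Q_i)\leq\ell(R)$ and the bounded overlap of the cubes $3Q_i$, yields
$$S_1^b\;\lesssim\;\sum_i\frac{\ell(Q_i)}{\ell(R)}\,\|Sx-x\|_{L^2(\mu\rest 3Q_i)}^2\;\lesssim\;\|Sx-x\|_{L^2(\mu)}^2 \;=\; W_2(\mu,\nu)^2.$$

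The main obstacle is the lower bound in the estimate $m_P\sigma\approx\ell(P)^2/\ell(R)^2$ on $\TT_b$, which hinges on the structural observation that cubes $P\in\TT_b$ must carry Whitney cubes, and therefore cannot lie far from $\partial R$ at their own scale, so a definite fraction of $P$ sees $\vphi$ of size $\ell(P)^2/\ell(R)^2$. Once this geometric point is secured, the rest of the argument is essentially a dyadic accounting exercise combining the Whitney covering properties with the geometric-series summations just described.
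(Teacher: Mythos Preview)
Your proof is correct and follows essentially the same route as the paper: the same split $\TT=\TT_a\cup\TT_b$, the same use of $m_P\sigma\approx\ell(Q_P)^2/\ell(R)^2$ on $\TT_a$ and $m_P\sigma\gtrsim\ell(P)^2/\ell(R)^2$ on $\TT_b$, and the same interchange of summation over $P$ and $i$ for $S_1^b$. You supply a bit more detail than the paper on two points --- the justification that $\ell(P)\gtrsim\ell(Q_i)$ whenever $P\in\TT_b$ meets $3Q_i$, and the reason the lower bound on $m_P\sigma$ holds --- but note that the latter actually holds for \emph{every} $P\in\DD(R)$ (the concentric half $\tfrac12P$ is at distance $\geq\ell(P)/4$ from $\partial R$), so the $\TT_b$ hypothesis is not needed there.
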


\begin{proof}
We separate the sum $S_1$ into two sums, according to whether $P$ is contained in 
some square $Q_i$, $i\in I$, or not. As above, in the first case we write $P\in\TT_a$, and in the second 
$P\in\TT_b$. So we have
$$S_1 = \sum_{P\in \TT_a} \frac{\ell(P)\ell(R)}{m_P\sigma} \,  E(P)^2 + 
\sum_{P\in \TT_b} \frac{\ell(P)\ell(R)}{m_P\sigma} \,  E(P)^2=: S_1^a + S_1^b.$$
We consider first the case $P\in \TT_a$. Then recall that
$$E(P) = \frac{\ell(Q_P)}{\ell(R)^2}\,\|(x-Sx)\chi_{P}\|_{L^2(\mu)}.$$
Therefore,
$$S_1^a\leq \sum_{P\in \TT_a} \frac{\ell(P)\,\ell(Q_P)^2}{m_P\sigma\,\ell(R)^3}\,\|(x-Sx)\chi_{P}\|_{L^2(\mu)}^2.$$
Since $\vphi \approx \ell(Q_P)^2/\ell(R)^2$ on $P\subset Q_P$, we have $m_P \sigma\approx \ell(Q_P)^2/\ell(R)^2$.
Thus,
\begin{align*}
S_1^a  &\lesssim \sum_{P\in \TT_a} \frac{\ell(P)}{\ell(R)}\,\|(x-Sx)\chi_{P}\|_{L^2(\mu)}^2
\lesssim \int_R|Sx-x|^2\,d\mu \leq
W_2(\mu,\nu)^2.
\end{align*}

Let us turn our attention to $S_1^b$. Recall that if $P\in\TT_b$, then
$$E(P)= \biggl(\,
\sum_{i:3Q_i\cap P\neq\varnothing}
 \frac{\ell(Q_i)^2}{\ell(R)^4}\,\|Sx-x\|_{L^2(\mu|3Q_i)}^2\biggr)^{1/2},$$
and thus
$$S_1^b  = \sum_{P\in \TT_b} \frac{\ell(P)\ell(R)}{m_P\sigma}\,
\sum_{i:3Q_i\cap P\neq\varnothing}
 \frac{\ell(Q_i)^2}{\ell(R)^4}\,\|Sx-x\|_{L^2(\mu|3Q_i)}^2.
$$
Using that 
$m_P \sigma\gtrsim\ell(P)^2/\ell(R)^2,$
 we get
\begin{align}\label{eqnop65}
S_1^b & \lesssim \sum_{P\in \TT_b} \sum_{i:3Q_i\cap P\neq\varnothing}\frac{\ell(Q_i)^2}
{\ell(P)\,\ell(R)}\,
\|Sx-x\|_{L^2(\mu|3Q_i)}^2.
\end{align}
It is easy to check that for each fixed $Q_i$,
$$\sum_{P\in \TT_b:3Q_i\cap P\neq\varnothing}\frac1{\ell(P)}\lesssim \frac1{\ell(Q_i)}.$$
Therefore, changing the order of summation in \rf{eqnop65} we obtain
\begin{align*}
S_1^b 
&\lesssim \sum_{i\in I}\frac{\ell(Q_i)}
{\ell(R)}\,
\|Sx-x\|_{L^2(\mu|3Q_i)}^2\lesssim \int_R|Sx-x|^2\,d\mu \leq
W_2(\mu,\nu)^2 .
\end{align*}
\end{proof}
\vvv

Next, we deal with the sum $S_2$ from \rf{eqde10}.

\begin{lemma}\label{lems2}
We have
$$S_2 = \sum_{P\in \TT} 
\frac{\ell(R)\,\eta(P)^2}{\ell(P)^{n-1}\,m_P\sigma} \lesssim W_2(\mu,\nu)^2.$$
\end{lemma}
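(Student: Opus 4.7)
The plan is to extract a quadratic gain of $|x-Tx|^2$ from the constraint $x\in G^c$ appearing in the definition of $\eta$, at the price of a factor $1/\ell(R)^2$, and then to handle the denominator $m_P\sigma$ by means of the trivial inequality $\eta(P)\leq\sigma(P)$. The geometric structure of $\TT$ (whose elements are nested dyadic subcubes of $R$) will then make the final sum over $P$ collapse to something controllable by $W_2(\mu,\nu)^2$.

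The first step is to establish the pointwise bound
$$\chi_{T^{-1}(R)}(x)\,\chi_{G^c}(x)\,\vphi(Tx)\lesssim \frac{|x-Tx|^2}{\ell(R)^2},$$
which is essentially the calculation already performed in the proof of Lemma \ref{lem12}. Indeed, if $x\in G_2^c\cap T^{-1}(R)$, then $|x-Tx|>\ell(Q_{Tx})$ and the bound follows from $\vphi(Tx)\lesssim\ell(Q_{Tx})^2/\ell(R)^2$. If instead $x\in G_2\cap G_1^c$, then $x\in 3Q_{Tx}\subset R$, so $Q_x\cap 3Q_{Tx}\neq\varnothing$, giving $\ell(Q_x)\approx\ell(Q_{Tx})$, and then $|x-Tx|>\ell(Q_x)\approx \ell(Q_{Tx})$ again yields the bound. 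Multiplying by $\chi_P(Tx)$ and integrating against $\nu$ gives
$$\eta(P)\lesssim \frac{1}{\ell(R)^2}\int_{G^c\cap T^{-1}(P)} |x-Tx|^2\,d\nu(x).$$

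Next, since $\sigma=\vphi\,T\#\nu$ and $\eta=\vphi\,T\#(\chi_{G^c}\nu)$, it is immediate that $\eta(P)\leq\sigma(P)=m_P\sigma\cdot\ell(P)^n$. Therefore
$$\frac{\ell(R)\,\eta(P)^2}{\ell(P)^{n-1}\,m_P\sigma}=\frac{\ell(R)\,\ell(P)\,\eta(P)^2}{\sigma(P)}\leq \ell(R)\,\ell(P)\,\eta(P).$$
Substituting the pointwise bound and interchanging sum and integral,
$$S_2\leq \ell(R)\sum_{P\in\TT}\ell(P)\,\eta(P)\lesssim \frac{1}{\ell(R)}\int_{G^c}\Bigl(\sum_{P\in\TT,\,Tx\in P}\ell(P)\Bigr)|x-Tx|^2\,d\nu(x).$$
For each fixed point $y$, the cubes of $\TT$ that contain $y$ are nested dyadic ancestors, so their side lengths form a geometric series whose sum is bounded by $2\ell(R)$. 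Plugging this in yields $S_2\lesssim \int |x-Tx|^2\,d\nu(x) = W_2(\mu,\nu)^2$.

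The main obstacle is the verification of the pointwise quadratic bound uniformly across both pieces of the decomposition $G^c=G_2^c\cup(G_2\cap G_1^c)$; this is where the specific quadratic decay of $\vphi$ near $\partial R$ (as encoded in the definition of $\GZ(R)$) is essential, and it is the only place in the argument where a slower decay rate would fail. Once this bound is in place, the remaining steps (the inequality $\eta(P)\leq\sigma(P)$ and the telescoping sum over dyadic ancestors) are entirely routine.
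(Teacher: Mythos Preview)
Your proof is correct and follows essentially the same approach as the paper. The paper uses $\eta(P)\leq\sigma(P)$ to reduce to $\ell(R)\sum_{P\in\TT}\ell(P)\,\eta(P)\lesssim\ell(R)^2\,\eta(R)$ and then invokes \rf{eqd32*'} for the bound $\eta(R)\lesssim W_2(\mu,\nu)^2/\ell(R)^2$; you simply unpack that citation by re-deriving the pointwise quadratic bound from the proof of Lemma~\ref{lem12} and apply it cube-by-cube before summing, which amounts to the same computation in a slightly different order.
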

\vv

\begin{proof}
Since $\eta(P)= \vphi\,T\#(\chi_{G^c}\,\nu)(P)\leq \vphi\,T\#\nu(P)=\sigma(P)$,
 we have
$$S_2\leq \sum_{P\in \TT} 
\ell(R)\,\ell(P)\,\eta(P)\lesssim 
\ell(R)^2\,\eta(R).$$
By \rf{eqd32*'}, we know that
$$\eta(R) = \|\vphi\mu -\vphi\,T\#(\chi_{G}\,\nu)\|
\lesssim \frac{W_2(\mu,\nu)^2}{\ell(R)^2},$$
which proves the lemma.
\end{proof}
\vvv

Now we consider the last term $S_3$ from \rf{eqde10}.

\begin{lemma}\label{lems3}
We have
$$S_3 = W_2(\mu,\nu)^2\sum_{P\in \TT}   
\frac{1}{m_P\sigma\,\ell(P)^{n-1}\,\ell(R)^{n+1}}\,
\left(\int_{P\cap S^{-1}(G)} \vphi\circ S\,d\mu\right)
^2 \lesssim W_2(\mu,\nu)^2.$$
\end{lemma}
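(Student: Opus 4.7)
The plan is to peel the sum $S_3$ into two pieces: first use a geometric comparison between $\vphi(x)$ and $\vphi(Sx)$ on the good set $S^{-1}(G)$ to replace $\int_{P\cap S^{-1}(G)} \vphi\circ S\,d\mu$ by a quantity controlled by $\sigma(P)$, and then reduce the resulting sum to a standard Carleson-type dyadic estimate.

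\textbf{Step 1: comparison of $\vphi\circ S$ with $\vphi$ on $S^{-1}(G)$.} For $x\in S^{-1}(G)$ we have $Sx\in G$, and from the second set in the definition \rf{eqdefg12} of $G$ applied at $Sx$ we get $|Sx-T(Sx)|\le\ell(Q_{Sx})$. Since $T\circ S=\mathrm{Id}$ $\mu$-a.e., this means $|x-Sx|\le\ell(Q_{Sx})$, so $x\in 3Q_{Sx}$. By the Whitney property $5Q_{Sx}\subset\inter R$, so $\dist(Sx,\partial R)\approx \ell(Q_{Sx})\approx\dist(x,\partial R)$, and the definition of $\GZ(R)$ yields $\vphi(Sx)\approx\vphi(x)$. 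Hence
\[
\int_{P\cap S^{-1}(G)} \vphi\circ S\,d\mu \;\lesssim\; \int_{P\cap S^{-1}(G)}\vphi\,d\mu \;\le\; \sigma(P).
\]

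\textbf{Step 2: algebraic simplification.} Substituting this bound into $S_3$ and using $m_P\sigma=\sigma(P)/\ell(P)^n$ gives
\[
\frac{1}{m_P\sigma\,\ell(P)^{n-1}\,\ell(R)^{n+1}}\,\sigma(P)^2 \;=\; \frac{\sigma(P)\,\ell(P)}{\ell(R)^{n+1}},
\]
so it suffices to prove
\[
\sum_{P\in\TT}\sigma(P)\,\ell(P)\;\lesssim\;\ell(R)^{n+1}.
\]

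\textbf{Step 3: Carleson-type summation.} Group the cubes by generation: let $\TT_j=\{P\in\TT:\ell(P)=2^{-j}\ell(R)\}$. Since cubes in $\TT_j$ are pairwise disjoint and contained in $R$, $\sum_{P\in\TT_j}\sigma(P)\le\sigma(R)$. Summing the geometric series,
\[
\sum_{P\in\TT}\sigma(P)\,\ell(P) \;=\; \ell(R)\sum_{j\ge 0} 2^{-j}\sum_{P\in\TT_j}\sigma(P) \;\le\; 2\,\ell(R)\,\sigma(R),
\]
and $\sigma(R)=\int\vphi\,d\mu\le c_4\,\ell(R)^n$ since $\vphi\le 1$ and $f\le c_4$ on $R$. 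This yields $\sum_{P\in\TT}\sigma(P)\ell(P)\lesssim \ell(R)^{n+1}$ and hence $S_3\lesssim W_2(\mu,\nu)^2$, as desired.

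The only conceptual step is Step~1; Steps~2--3 are purely bookkeeping. The comparison $\vphi(Sx)\approx\vphi(x)$ on $S^{-1}(G)$ is the one place we actually exploit the quadratic boundary decay of $\vphi$ together with the definition of the ``good set'' $G$, and it is essentially the same trick that was used for the analogous estimate with $T$ in place of $S$ in \rf{eqtauq1}. No additional technical obstacle is expected beyond verifying that all the properties of Whitney cubes behave as anticipated near $\partial R$.
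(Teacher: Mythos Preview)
Your proof is correct and follows essentially the same approach as the paper: the key point---that $\vphi(Sx)\approx\vphi(x)$ on $S^{-1}(G)$, hence $\int_{P\cap S^{-1}(G)}\vphi\circ S\,d\mu\lesssim\sigma(P)$---is exactly what the paper does, and your Step~3 is a harmless variant of the paper's final summation (the paper uses $\sigma(P)\lesssim\ell(P)^n$ and sums $\ell(P)^{n+1}/\ell(R)^{n+1}$, while you sum $\sigma(P)$ over each generation directly). One tiny slip: the inequality $|Sx-T(Sx)|\le\ell(Q_{Sx})$ comes from the \emph{first} set in \rf{eqdefg12}, not the second.
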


\vv
\begin{proof}
Notice that if $x\in P\cap S^{-1}(G)$, 
then $Sx\in G$, and from the definition of $G$, it turns out that
$|T(Sx)-Sx|\leq \ell(Q_{Sx})$, where $Q_{Sx}$ stands for the Whitney cube that contains $Sx$. Since
$T(Sx)=x$ for $\mu$-a.e. $x$, then 
$$|x-Sx|\leq \ell(Q_{Sx})\qquad \text{$\mu$-a.e. $x\in  P\cap S^{-1}(G)$.}$$
Then, for these points $x$, $\vphi(Sx)\approx\vphi(x)$, and thus
$$\int_{P\cap S^{-1}(G)} \vphi\circ S\,d\mu\lesssim \int_{P\cap S^{-1}(G)} \vphi\,d\mu
\lesssim \sigma(P).$$
Consequently,
$$S_3 \lesssim W_2(\mu,\nu)^2\sum_{P\in \TT}   
\frac{\sigma(P)^2}{m_P\sigma\,\ell(P)^{n-1}\,\ell(R)^{n+1}} = 
W_2(\mu,\nu)^2\sum_{P\in \TT} \frac{\ell(P)\,\sigma(P)}{\ell(R)^{n+1}}.$$
Using that $\sigma(P)\lesssim\ell(P)^n$, we derive
$$S_3 \lesssim W_2(\mu,\nu)^2\sum_{P\in \TT} \frac{\ell(P)^{n+1}}{\ell(R)^{n+1}}\lesssim W_2(\mu,\nu)^2.$$
\end{proof}

\vvv

In the preceding lemmas we have shown that $S_1+S_2+S_3\lesssim W_2(\mu,\nu)^2$.
So we have shown that the left side of \rf{eqde10}, which equals the first sum of \rf{eqde9}, is bounded
above by $c\,W_2(\mu,\nu)^2$. This
 completes the proof of Theorem \ref{teoloc}.\fiproof


\section{Proof of Lemma \ref{lemkr0}}\label{secappendix}

Recall that, to conclude the proof of Theorem \ref{teobola}, it still remains to prove Lemma \ref{lemkr0}, which we rewrite here for the reader's convenience.

\begin{lemma*}
Let  $R$ and $\vphi\in \GZ(R)$ be as in Theorem \ref{teoloc}. 
Consider the measure $d\wt\sigma=\vphi\,dm$ and let 
$Q\in\DD(R)$. Consider a bounded function $h$ supported on $Q$ such that $\|h\|_\infty\leq1$ and $\int h\,d\wt\sigma=0$.
Then there exists a map $U:Q\to Q$ such that
$U\# (\wt\sigma\rest Q) = (1+h)\,\wt\sigma\rest Q$ which satisfies
\begin{equation}\label{equxu1}
|Ux - x|\lesssim \ell(Q)\,\|h\|_\infty\qquad\text{for all $x\in Q$.}
\end{equation}
\end{lemma*}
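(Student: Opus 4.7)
The plan is to construct $U$ as the Knothe--Rosenblatt rearrangement from $\wt\sigma\rest Q$ to $(1+h)\wt\sigma\rest Q$; both have equal mass since $\int h\,d\wt\sigma=0$. We may assume $\|h\|_\infty$ is small (say $\leq 1/4$), since otherwise any self-map of $Q$ already satisfies $|Ux-x|\leq \sqrt n\,\ell(Q)\lesssim \ell(Q)\,\|h\|_\infty$. We also permute the coordinates of $\R^n$ so that, if $Q$ sits close to a unique face of $R$, the direction normal to that face is the last coordinate; if $Q$ is well inside $R$, then $\vphi$ is comparable to a positive constant on $Q$ and any ordering works.

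Having fixed the ordering, define $U=(U_1,\ldots,U_n)$ iteratively by the usual KR recipe: given $U_1,\ldots,U_{k-1}$, take $U_k(x_1,\ldots,x_k)$ to be the monotone 1D transport pushing the source conditional marginal in $x_k$ (at $x_1,\ldots,x_{k-1}$) onto the target conditional marginal in $x_k$ (at $U_1,\ldots,U_{k-1}$). Writing $F_k$ for the source cumulative conditional distribution, the defining relation $F_k(x_k)=G_k(U_k)$ decomposes as
\begin{equation*}
F_k(x_k)-F_k(U_k)\;=\;\int_{a_k}^{U_k}\!\!(h\,\vphi)(U_1,\ldots,U_{k-1},s)\,ds \;+\; \mathrm{Err}_k,
\end{equation*}
where the first term is bounded in absolute value by $\|h\|_\infty F_k(U_k)$ (up to constants, using comparability of $\vphi$ under small perturbations of its argument), and $\mathrm{Err}_k$ is the contribution from having swapped $(x_1,\ldots,x_{k-1})$ for $(U_1,\ldots,U_{k-1})$ in the target marginal. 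Using the tangential gradient bound $|\nabla_T\vphi|\lesssim \dist(\cdot,\partial R)^2/\ell(R)^3$, $\mathrm{Err}_k$ is controlled by $\sum_{j<k}|U_j-x_j|$ times an acceptable factor, hence absorbed by the inductive hypothesis.

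For a tangential coordinate $k<n$, the density $\vphi$ is essentially constant in $x_k$, so inverting the above estimate by the mean value theorem yields $|U_k-x_k|\lesssim \|h\|_\infty\,\ell(Q)$. For the normal coordinate $k=n$, $\vphi\approx t^2/\ell(R)^2$ (up to a slowly varying factor in the other coordinates), so $F_n(t)\approx t^3/\ell(R)^2$; the estimate $|F_n(x_n)-F_n(U_n)|\lesssim \|h\|_\infty\,F_n(U_n)$ then translates into $|U_n^3-x_n^3|\lesssim \|h\|_\infty\,x_n^3$, whence $|U_n-x_n|\lesssim \|h\|_\infty\,x_n\leq \|h\|_\infty\,\ell(Q)$. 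The vanishing of the density at $\partial R$ is exactly offset by the correspondingly small mass to be moved, which is why the normal direction must come last. The main obstacle — and the source of the tedium the author warns about — is handling cubes $Q$ near a corner or edge of $R$, where ``normal direction'' is ambiguous and $\vphi$ vanishes along more than one axis: this forces either a case analysis according to which face(s) of $R$ are closest to $Q$, or a preliminary change of variables that straightens out $\dist(\cdot,\partial R)$, together with a careful inductive tracking of $\mathrm{Err}_k$ so that the displacement bounds for earlier coordinates feed correctly into those for later ones.
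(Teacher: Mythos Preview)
Your strategy via the Knothe--Rosenblatt coupling is exactly the paper's, and your analysis for cubes touching at most one face of $R$ is correct (and in fact a little cleaner than the paper's presentation, since with ``normal last'' all the swaps $x_j\to U_j$ for $j<n$ are genuinely tangential). But the corner case, which you rightly flag as the crux, is not just a matter of bookkeeping: it forces two changes relative to what you wrote.

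First, the paper orders the coordinates the \emph{opposite} way. After normalizing $R=[0,1]^n$ and $Q\subset(0,1/2]^n$, it takes $a_1\le a_2\le\cdots\le a_n$, so the boundary-touching directions are processed \emph{first}. Second, and this is the real point, the inductive hypothesis is not $|U_j-x_j|\lesssim\|h\|_\infty\,\ell(Q)$ but the multiplicative bound
\[
|U_j(x_1,\ldots,x_j)-x_j|\;\lesssim\;\|h\|_\infty\,\min(x_j-a_j,\,b_j-x_j),
\]
which for $a_j=0$ reads $|U_j-x_j|\lesssim\|h\|_\infty\,x_j$. The reason is precisely your $\mathrm{Err}_k$ term: when $Q$ touches several faces and $x_i=\min(x_1,\ldots,x_n)$ for some already-processed index $i$, the swap $x_i\to y_i$ is along the \emph{normal} direction at that point, so $|\nabla_T\vphi|$ is unavailable and one must use $|\nabla\vphi|\lesssim x_i/\ell(R)^2$. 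Combined with the multiplicative bound this gives
\[
|\vphi(\ldots,y_i,\ldots)-\vphi(\ldots,x_i,\ldots)|\;\lesssim\;\frac{x_i}{\ell(R)^2}\cdot\|h\|_\infty\,x_i\;\approx\;\|h\|_\infty\,\vphi,
\]
which is what one needs; with only $|x_i-y_i|\lesssim\|h\|_\infty\,\ell(Q)$ the right side would be $\|h\|_\infty\,x_i\ell(Q)/\ell(R)^2$, not controlled by $\|h\|_\infty\vphi$ as $x_i\to0$. The paper packages this into a separate lemma (its Lemma~\ref{lemdef1}), splitting the swap into one normal increment (handled by $|\nabla\vphi|$ plus the multiplicative bound) and the remaining tangential increments (handled by $|\nabla_T\vphi|$). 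This, rather than a change of variables, is what closes the corner case; your sketch would go through once you upgrade the inductive hypothesis accordingly.
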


\vvv
\begin{proof} Denote $\sigma_0=\wt\sigma\rest Q$ and
$\rho_0=\sigma_0 + h\,\sigma_0$.
We may assume that $\|h\|_\infty$ is small enough. Otherwise \rf{equxu1} holds trivially for the
optimal map for quadratic cost, taking into account that $\sigma_0$ and $\rho_0$ are both
supported on $Q$. 
 
We will prove the lemma using the Knothe-Rosenblatt coupling between $\sigma_0$ and $\rho_0$, as explained in 
\cite[p.8]{Villani-oldnew}.  
 Let us recall briefly in what consists this coupling.
First, one considers the marginals of $\sigma_0$ and $\rho_0$ on the first variable, 
obtaining $\sigma_1=\sigma_1(dx_1)$ and $\rho_1=\rho_1(dx_1)$. Define $y_1=U_1(x_1)$ by the formula of the increasing rearrangement such that $U_1\#\sigma_1=\rho_1$. That is,
given 
$$F_1(t) = \sigma_1(-\infty,t], \qquad G_1(t) = \rho_1(-\infty,t],$$
set $U_1= G_1^{-1}\circ F_1$.

Take now the marginals with respect to the first two variables, and obtain $d\sigma_2(x_1,x_2)$ and 
$d\rho_2(x_1,x_2)$. Disintegrate them so that for each $x_1$ there exist measures $d\sigma_{2;x_1}(x_2)$
and $d\rho_{2;x_1}(x_2)$ such that
\begin{equation}\label{eqdenss8}
d\sigma_2(x_1,x_2) = d\sigma_1(x_1)\,d\sigma_{2;x_1}(x_2),\qquad
d\rho_2(x_1,x_2) = d\rho_1(x_1)\,d\rho_{2;x_1}(x_2).
\end{equation}
Let us remark that in our precise situation we do not need to apply any delicate 
disintegration theorem since the densities of $d\sigma_{2;x_1}(x_2)$ and $d\rho_{2;x_1}(x_2)$
can be explicitly calculated by the identities in \rf{eqdenss8}.
Consider the map $y_2=U_2(x_1,x_2)$ such that for each $x_1$, $U_2(x_1,\cdot)\#\sigma_{2;x_1} 
=\rho_{2;U_1x_1}$,
where $U_2$ is given by the corresponding increasing rearrangement. 

Repeat the construction adding variables one after the other, and obtain also $U_3(x_1,x_2,x_3)$,\ldots, 
$U_n(x_1,\ldots,x_n)$. Finally, set
$$U(x) = \bigl(U_1(x_1),U_2(x_1,x_2),\ldots,U_n(x_1,\ldots,x_n)\bigr).$$
It is easy to check that $U\#\sigma_0=\rho$.

By translating $R$, we may assume that $R= [0,1]^n$ (or $(0,1]^n$, to be more precise), 
and that $Q\cap [0,1/2]^n\neq \varnothing$, so that
either $Q$ is contained in $(0,1/2]^n$ or $Q=R$. Let $a_j,b_j$ be such that
$$Q=(a_1,b_1]\times \cdots \times(a_n,b_n].$$
By interchanging the coordinates if necessary, we will also suppose that
$a_1\leq a_2\leq\cdots \leq a_n.$
We will show below that for every $x\in Q$, and $1\leq j \leq n$,
\begin{equation}\label{eqkr1}
|U_j(x_1,\ldots,x_j) - x_j|\lesssim \|h\|_\infty\,\min\bigl(x_j-a_j,b_j-x_j\bigr).
\end{equation}
Clearly, \rf{equxu1} follows from this estimate.

To prove \rf{eqkr1}, recall that 
$$\vphi(x)\approx \frac{\dist(x,\partial R)^2}{\ell(R)^2 }\qquad \text{for all $x\in R$}.$$
If $Q\cap \partial R=\varnothing$, then $\dist(Q,\partial R)\geq\ell(Q)$,
and from the preceding estimate, it turns out that $\vphi(x)\approx m_Q\vphi$ for every $x\in Q$. 
This fails if $Q\cap \partial
R\neq\varnothing$. Indeed, one can easily check that $m_Q\vphi\approx\ell(Q)^2/\ell(R)^2$, while 
$\vphi$ vanishes on $\partial Q\cap\partial R$.

\vvv
We will derive \rf{eqkr1} by induction on $j$, first assuming that  {\boldmath $Q\neq R$} and thus $Q\subset (0,1/2]^n$. In this case, $\dist(x,\partial R)=\min(x_1,\ldots,x_n)$ for every $x\in Q$, and so
\begin{equation}\label{eqkr0}
\vphi(x)\approx \frac{\min(x_1,\ldots,x_n)^2}{\ell(R)^2 }\qquad \text{for $x\in Q$}.
\end{equation}
\vvv

{\bf The case {\boldmath $j=1$}.} Recall that $U_1= G_1^{-1}\circ F_1$, with 
$F_1(t) = \sigma_1(-\infty,t]$ and $ G_1(t) = \rho_1(-\infty,t]$. To estimate $\sigma_1$, notice
that, using
the assumption $a_1\leq a_2\leq\cdots \leq a_n$, it turns out that
$$x_1\geq \min(x_1,\ldots,x_n)\geq \frac{x_1}2$$
for $x_2,\ldots,x_n$ in a subset of $[a_2,b_2]\times \cdots\times[a_n,b_n]$ of $(n-1)$-dimensional Lebesgue measure comparable to $\ell(Q)^{n-1}$ (possibly depending on 
$n$). From this fact, one infers that
$$d\sigma_1(x_1) \approx \left(\int_{a_2}^{b_2}\cdots \int_{a_n}^{b_n} \frac{\min(x_1,\ldots,x_n)^2}{\ell(R)^2
}\,dx_2\ldots dx_n\right)dx_1\approx \frac{\ell(Q)^{n-1}\,x_1^2}{\ell(R)^2}\,dx_1.$$
Now we distinguish two cases, according to whether $a_1=0$ or not.

\vvv
Suppose first that {\boldmath{$a_1=0$}}. Then,
\begin{equation}\label{eqf19}
F_1(t) = \sigma_1(-\infty,t] \approx \int_0^t\frac{\ell(Q)^{n-1}\,x_1^2}{\ell(R)^2}\,dx_1 \approx 
\frac{\ell(Q)^{n-1}}{\ell(R)^2}\,t^3= A_0\,t^3, \quad\text{for $t\in[a_1,b_1]$,}
\end{equation}
where, to simplify notation, we set $A_0=\frac{\ell(Q)^{n-1}}{\ell(R)^2}$.
Assuming $\|h\|_\infty\leq1/2$, the density functions of $\sigma_0$ and $\rho_0$ are comparable, and thus we also 
have 
$$
d\rho_1(x_1)\approx \chi_{[a_1,b_1]}(x_1)\,A_0 \,x_1^2\,dx_1,\qquad
G_1(t) = \rho_1(-\infty,t]  \approx 
A_0\,t^3\quad\text{for $t\in[a_1,b_1]$},
$$
and therefore
$$G_1^{-1}(s) \approx A_0^{-1/3}\,s^{1/3},$$
and the derivative of $G_1^{-1}$ satisfies
$$(G_1^{-1})'(s) = \frac1{G_1'(G_1^{-1}(s))}\approx \frac1{A_0\,G_1^{-1}(s)^2}
\approx  A_0^{-1/3}\,s^{-2/3}.$$
Then we deduce
\begin{align}\label{eqf29}
|U_1(x_1) - x_1| &= |G_1^{-1}(F_1(x_1)) - G_1^{-1}(G_1(x_1))| =\left|\int_{G_1(x_1)}^{F_1(x_1)}(G_1^{-1})'(s)
\,ds\right|\nonumber\\
& \approx A_0^{-1/3}\,\left|\int_{G_1(x_1)}^{F_1(x_1)} s^{-2/3}
\,ds\right|\approx A_0^{-1/3}\,|F_1(x_1)^{1/3}- G_1(x_1)^{1/3}|.
\end{align}
Since $|c^{1/3}-1|\approx|c-1|$ for $c$ close to $1$, assuming $\|h\|_\infty$ small enough we get
\begin{align}\label{eqff30}
|U_1(x_1) - x_1|& \approx 
A_0^{-1/3}\, F_1(x_1)^{1/3}\,
\biggl|\frac{G_1(x_1)}{F_1(x_1)}-1\biggr| =
A_0^{-1/3}\, F_1(x_1)^{-2/3}\,
|F_1(x_1) - G_1(x_1)|.
\end{align}
Using that $|F_1(x_1) - G_1(x_1)|\leq F_1(x_1)\,\|h\|_\infty$ and 
$F_1(x_1)\approx A_0\,x_1^3$,
 we obtain
$$|U_1(x_1) - x_1|\lesssim
A_0^{-1/3}\, F_1(x_1)^{-2/3}
F_1(x_1)\,\|h\|_\infty\approx x_1\,\|h\|_\infty,
$$
which proves \rf{eqkr1} in this case.

\vvv
Suppose now that {\boldmath $a_1\neq 0$}. 
Then we have $x_1\approx a_1$ for all $x\in Q$, and we obtain
\begin{equation}\label{eqf208}
F_1(t) = \sigma_1(-\infty,t] \approx \int_{a_1}^t\frac{\ell(Q)^{n-1}\,a_1^2}{\ell(R)^2}\,dx_1 
= (t-a_1)\frac{\ell(Q)^{n-1}\,a_1^2}{\ell(R)^2} \quad\text{for $t\in[a_1,b_1]$.}
\end{equation}
To simplify notation, we set $A_1=\frac{\ell(Q)^{n-1}\,a_1^2}{\ell(R)^2}$, so that 
$$F_1(t) \approx A_1\,(t-a_1)\quad\text{for $t\in[a_1,b_1]$.}$$
By the comparability of the measures $\sigma_0$ and $\rho_0$, we also get
$$d\rho_1(x_1)\approx \chi_{[a_1,b_1]}(x_1)\,A_1\,dx_1,\qquad
G_1(t) = \rho_1(-\infty,t]  \approx 
A_1\,(t-a_1)\quad\text{for $t\in[a_1,b_1]$},$$
and thus we derive
$$G_1^{-1}(s)-a_1\approx \frac s{A_1}$$
and
$$(G_1^{-1})'(s) = \frac1{G_1'(G_1^{-1}(s))}\approx \frac1{A_1}.$$
Arguing as in \rf{eqf29}, we deduce
$$|U_1(x_1) - x_1|  =\left|\int_{G_1(x_1)}^{F_1(x_1)}(G_1^{-1})'(s)
\,ds\right| \approx \frac1{A_1}\,|F_1(x_1)- G_1(x_1)|.$$
Using that $|F_1(x_1) - G_1(x_1)|\leq F_1(x_1)\,\|h\|_\infty$ and 
$F_1(x_1)\approx A_1\,(x_1-a_1)$, we obtain
$$|U_1(x_1) - x_1|\lesssim \frac1{A_1} F_1(x_1)\,\|h\|_\infty\approx |x_1-a_1|\,\|h\|_\infty.$$
Thus \rf{eqkr1} also holds in this case.

\vvv
{\bf The case {\boldmath $j>1$}.} 
Suppose that $U_1(x_1),\ldots,U_{j-1}(x_1,\ldots,x_{j-1})$ satisfy \rf{eqkr1}.
Recall that 
$$U_j(x_1,\ldots,x_{j-1},\,\cdot\,)\#\sigma_{j;x_1,\ldots,x_{j-1}} = \rho_{j;y_1,\ldots,y_{j-1}},$$
where $\sigma_{j;x_1,\ldots,x_{j-1}}$ is defined by
$$d\sigma_j(x_1,\ldots,x_j) = d\sigma_{j-1}(x_1,\ldots,x_{j-1})\,d\sigma_{j;x_1,\ldots,x_{j-1}}(x_j),$$
and analogously for $\rho_0$. Recall also that $y_1=U_1(x_1)$, $y_2=U_2(x_1,x_2), \ldots$ and, further,
 $\sigma_{j-1}$ and $\sigma_j$ stand for the marginals of $\sigma_0$
with respect to the first $j-1$ and $j$ variables, respectively. Thus,
$$d\sigma_{j-1}(x_1,\ldots,x_{j-1}) 
\approx \left(\int_{a_j}^{b_j}\cdots \int_{a_n}^{b_n} \frac{\min(x_1,\ldots,x_n)^2}{\ell(R)^2 }\,dx_j\ldots 
dx_n\right)dx_1\ldots dx_{j-1},$$
and similarly for $\sigma_j$.

To estimate the integral inside the parentheses we argue as in the case $j=1$: since $a_1\leq\ldots\leq
 a_n$, there exists a subset of $[a_j,b_j]\times\cdots\times[a_n,b_n]$ of measure comparable to 
 $\ell(Q)^{n-j+1}$ such that
 $$\min(x_1,\ldots,x_{j-1})\geq \min(x_1,\ldots,x_n)\geq \frac12\,\min(x_1,\ldots,x_{j-1}).$$
Consequently,
\begin{equation}\label{eqsj-1}
d\sigma_{j-1}(x_1,\ldots,x_{j-1}) \approx \frac{\ell(Q)^{n-j+1}}{\ell(R)^2}\min(x_1,\ldots,x_{j-1})^2\, dx_1\ldots dx_{j-1}.
\end{equation}
Analogously, we have
\begin{equation}\label{eqsj-0}
d\sigma_j(x_1,\ldots,x_j) \approx \frac{\ell(Q)^{n-j}}{\ell(R)^2}\min(x_1,\ldots,x_j)^2\, dx_1\ldots dx_j,
\end{equation}
and thus
$$d\sigma_{j;x_1,\ldots,x_{j-1}}(x_j) \approx \frac{\min(x_1,\ldots,x_j)^2}{\ell(Q)\,\min(x_1,\ldots,x_{j-1})^2}\,dx_j.$$

Since the densities of $\rho_0$ and $\sigma_0$ are comparable,  a similar estimate holds for 
$\rho_{j;y_1,\ldots,y_{j-1}}$:
$$d\rho_{j;y_1,\ldots,y_{j-1}}(x_j) \approx 
\frac{\min(y_1,\ldots,y_{j-1},x_j)^2}{\ell(Q)\,\min(y_1,\ldots,y_{j-1})^2}\,dx_j.$$
Moreover, since $|y_k-x_k|\lesssim\|h\|_\infty x_k$ for $1\leq k\leq j-1$, by \rf{eqkr1} and the induction
hypothesis, we deduce that $y_k\approx x_k$ for these $k$'s, assuming $\|h\|_\infty$ small enough, and then
we get
$$d\sigma_{j;x_1,\ldots,x_{j-1}}(x_j) \approx d\rho_{j;y_1,\ldots,y_{j-1}}(x_j).$$

For fixed points $x_1,\ldots,x_{j-1}$, $y_1,\ldots,y_{j-1}$, denote
$$F_j(t) = \sigma_{j;x_1,\ldots,x_{j-1}}(-\infty,t], \qquad
G_j(t) = \rho_{j;y_1,\ldots,y_{j-1}}(-\infty,t],$$  
so that $U_j(x_1,\ldots,x_{j-1},x_j) = G_j^{-1}\circ F_j(x_j)$. Then, as in the case $j=1$, we have
\begin{equation}\label{eqff31}
|U_j(x_1,\ldots,x_{j-1},x_j) - x_j| = |G_j^{-1}\circ F_j(x_j) - G_j^{-1}\circ G_j(x_j)| =
\left|\int_{G_j(x_j)}^{F_j(x_j)}(G_j^{-1})'(s)\,ds\right|.
\end{equation}
We need now to estimate $G_j(x_j)$, $F_j(x_j)$, and $(G_j^{-1})'(s)$. To this end, notice that
\begin{equation}\label{eqkr00}
d\sigma_{j;x_1,\ldots,x_{j-1}}(x_j) \approx \left\{\begin{array}{ll}
\dfrac{x_j^2}{\ell(Q)\,\min(x_1,\ldots,x_{j-1})^2}\,dx_j& \text{if $x_j\leq\min(x_1,\ldots,x_{j-1}),$}\\
&\\
\dfrac1{\ell(Q)}\,dx_j &  \text{if $x_j>\min(x_1,\ldots,x_{j-1}).$}
\end{array}
\right.
\end{equation}

Below we will show that
\begin{equation}\label{eqkr51}
|d\sigma_{j;x_1,\ldots,x_{j-1}}(t) - d\rho_{j;y_1,\ldots,y_{j-1}}(t)| \lesssim
\|h\|_\infty \,d\sigma_{j;x_1,\ldots,x_{j-1}}(t)
\end{equation}
(this notation means that the difference of the densities of $\sigma_{j;x_1,\ldots,x_{j-1}}$ and 
$\rho_{j;y_1,\ldots,y_{j-1}}$ is bounded above by some constant times
$\|h\|_\infty$ times the density $\sigma_{j;x_1,\ldots,x_{j-1}}$).
We defer the proof of this estimate to Lemma \ref{lemdef1}. Integrating on $t$, we derive
\begin{equation}\label{eqff33}
|F_j(t)- G_j(t)|\lesssim F_j(t)\, \|h\|_\infty.
\end{equation}

\vvv
Suppose first that {\boldmath $a_j=0$}. For $t\leq\min(x_1,\ldots,x_{j-1})$, we have
$$F_j(t) \approx \int_0^t \dfrac{x_j^2}{\ell(Q)\,\min(x_1,\ldots,x_{j-1})^2}\,dx_j \approx 
\dfrac{t^3}{\ell(Q)\,\min(x_1,\ldots,x_{j-1})^2} =: B_0\,t^3,$$
while for $t>\min(x_1,\ldots,x_{j-1})$,
$$F_j(t) \approx \int_0^{\min(x_1,\ldots,x_{j-1})} \dfrac{x_j^2}{\ell(Q)\,\min(x_1,\ldots,x_{j-1})^2}\,dx_j
+ \int_{\min(x_1,\ldots,x_{j-1})}^t\dfrac1{\ell(Q)}\,dx_j \approx \frac t{\ell(Q)}.$$
For $G_j(t)$ we have analogous estimates, because $G_j(t)\approx F_j(t)$.

Observe that, for $t\leq\min(x_1,\ldots,x_{j-1})$, this is the same estimate as the one in \rf{eqf19}, replacing
$A_0$ by $B_0$. Thus, for $0\leq s\leq\min(x_1,\ldots,x_{j-1})/\ell(Q)$ we deduce
$$G_j^{-1}(s) \approx B_0^{-1/3}\,s^{1/3},\qquad(G_j^{-1})'(s) = \frac1{G_j'(G_j^{-1}(s))}\approx   
B_0^{-1/3}\,s^{-2/3}.$$
On the other hand, for $s>\min(x_1,\ldots,x_{j-1})/\ell(Q)$, 
$$G_j^{-1}(s) \approx \ell(Q)\,s, \qquad(G_j^{-1})'(s) = \frac1{G_j'(G_j^{-1}(s))}\approx {\ell(Q)}.$$

For $x_j\leq \min(x_1,\ldots,x_{j-1})$, by \rf{eqff31} and arguing as in \rf{eqff30}, we will obtain
$$|U_j(x_1,\ldots,x_{j-1},x_j) - x_j|\lesssim B_0^{-1/3}\, F_j(x_j)^{-2/3}\,
|F_j(x_j) - G_j(x_j)|.$$
Hence, from \rf{eqff33} and the fact that 
$F_j(x_j)\approx B_0\,x_j^3$, we get
$$|U_j(x_1,\ldots,x_{j-1},x_j) - x_j|\lesssim
B_0^{-1/3}\, F_j(x_j)^{-2/3}
F_j(x_j)\,\|h\|_\infty\approx x_j\,\|h\|_\infty.
$$

For $x_j> \min(x_1,\ldots,x_{j-1})$, $F_j(x_j)\approx G_j(x_j)\gtrsim\min(x_1,\ldots,x_{j-1})/\ell(Q)$,
and then, for values of $s$ between $F_j(x_j)$ and $G_j(x_j)$, we have $(G_j^{-1})'(s)\approx\ell(Q)$,
and so, by \rf{eqff31},
$$|U_j(x_1,\ldots,x_{j-1},x_j) - x_j| =\left|\int_{G_j(x_j)}^{F_j(x_j)}(G_j^{-1})'(s)\,ds\right|
\approx {\ell(Q)}\,|F_j(x_j) - G_j(x_j)|$$
By \rf{eqff33} and the fact that $F_j(x_j)\approx x_j/\ell(Q)$,
$$|U_j(x_1,\ldots,x_{j-1},x_j) - x_j|\lesssim {\ell(Q)}\,F_j(x_j)\,\|h\|_\infty\approx
x_j\,\|h\|_\infty.$$
Therefore, \rf{eqkr1} holds if $a_j=0$.

\vvv
Consider now the case {\boldmath $a_j\neq0$}, and so $Q\subset [0,1/2]^n$. 
In this case, for all $x\in Q$ we have
$x_j\approx a_j$. Then, since $a_j\geq a_{j-1}\geq\cdots\geq a_1$, it easily follows that $x_j\gtrsim x_k$
for $1\leq k\leq j-1$. So from \rf{eqkr00}, we infer that
$$d\sigma_{j;x_1,\ldots,x_{j-1}}(x_j)\approx\dfrac1{\ell(Q)}\,dx_j.$$
In this case, we get
\label{eqf20}
$$F_j(t) = \sigma_{j;x_1,\ldots,x_{j-1}}(-\infty,t] \approx  \frac1{\ell(Q)}\,(t-a_j) \quad\text{for $t\in[a_j,b_j]$.}$$
This estimate is analogous to the one in \rf{eqf208}, replacing $A_1$ by $B_1=1/\ell(Q)$ and $a_1$ by $a_j$.
By similar arguments, we get again
$$|U_j(x_1,\ldots,x_{j-1},x_j) - x_j|\lesssim x_j\,\|h\|_\infty.$$
This finishes the proof of the lemma for $Q\neq R$.

\vvv
For {\boldmath $Q=R$},  the proof of \rf{eqkr1} for $x\in[0,1/2]^n$ is analogous to the one above for
$Q\neq R$ in the case $a_j=0$ (for every $j$). The details are left for the reader. The estimate \rf{eqkr1}
 for $x\in R\setminus [0,1/2]^n$
also holds, because of the symmetry of the assumptions on $R$, $\sigma_0$, and $\rho_0$.
Indeed, notice that we 
also have $U_1=\wt G_1^{-1}\circ \wt F_1$, with $\wt F_1(t) = \sigma_1[t,+\infty)$, $\wt G_1(t) = \rho_1[t,+\infty)$, and analogously for the other indices $j>1$.
\end{proof}
\vvv

To complete the proof of the preceding lemma, it remains to prove the estimate \rf{eqkr51}. This is what we 
do below.

\begin{lemma}\label {lemdef1}
Under the notation and assumptions of Lemma \ref{lemkr0}, if \rf{eqkr1} holds for $1\leq k\leq j-1$, then
$$|d\sigma_{j;x_1,\ldots,x_{j-1}}(x_j) - d\rho_{j;y_1,\ldots,y_{j-1}}(x_j)| \lesssim
\|h\|_\infty \,d\sigma_{j;x_1,\ldots,x_{j-1}}(x_j).$$
\end{lemma}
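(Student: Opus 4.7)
The plan is to split the difference as
\begin{equation*}
d\rho_{j;y_1,\ldots,y_{j-1}}(x_j) - d\sigma_{j;x_1,\ldots,x_{j-1}}(x_j) = A + B,
\end{equation*}
where $A = d\rho_{j;y_1,\ldots,y_{j-1}}(x_j) - d\sigma_{j;y_1,\ldots,y_{j-1}}(x_j)$ compares $\rho$ with $\sigma$ at the same conditioning point, and $B = d\sigma_{j;y_1,\ldots,y_{j-1}}(x_j) - d\sigma_{j;x_1,\ldots,x_{j-1}}(x_j)$ compares the conditioning points for $\sigma$. Each will be bounded by a constant multiple of $\|h\|_\infty\,d\sigma_{j;x_1,\ldots,x_{j-1}}(x_j)$; the two conditioning points can be reconciled at the end since the control on $B$ itself shows $d\sigma_{j;y}\approx d\sigma_{j;x}$ for small $\|h\|_\infty$.

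For $A$, denote $f_k(z_1,\ldots,z_k) = \int \vphi\,dz_{k+1}\cdots dz_n$ and analogously $g_k$ with $\vphi$ replaced by $(1+h)\vphi$, so that $d\sigma_{j;z}(x_j) = f_j(z,x_j)/f_{j-1}(z)$ and $d\rho_{j;z}(x_j) = g_j(z,x_j)/g_{j-1}(z)$. The pointwise bound $|h|\leq \|h\|_\infty$ integrates to $|g_k - f_k|\leq \|h\|_\infty f_k$ for $k=j-1,j$, and for $\|h\|_\infty$ small we have $g_{j-1}\geq f_{j-1}/2$. Writing $g_j/g_{j-1} - f_j/f_{j-1}$ over a common denominator then gives $|A|\lesssim \|h\|_\infty\,d\sigma_{j;y}(x_j)$ by direct substitution.

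For $B$, the plan is to establish the pointwise estimate
\begin{equation*}
|\vphi(y_1,\ldots,y_{j-1},x_j,\ldots,x_n) - \vphi(x_1,\ldots,x_{j-1},x_j,\ldots,x_n)| \lesssim \|h\|_\infty\,\vphi(x_1,\ldots,x_n),
\end{equation*}
for all $(x_{j+1},\ldots,x_n)$, which integrates to $|f_k(y,\cdot)-f_k(x,\cdot)|\lesssim \|h\|_\infty f_k(x,\cdot)$ for $k=j-1,j$, after which the same ratio computation as for $A$ delivers $|B|\lesssim \|h\|_\infty\, d\sigma_{j;x}(x_j)$. To prove the pointwise estimate, apply the mean value theorem: for some $\xi$ on the segment joining $(x_1,\ldots,x_{j-1},x_j,\ldots,x_n)$ and $(y_1,\ldots,y_{j-1},x_j,\ldots,x_n)$,
\begin{equation*}
|\vphi(y,x_j,\ldots,x_n) - \vphi(x,x_j,\ldots,x_n)| \leq \sum_{k=1}^{j-1}|y_k-x_k|\,|\partial_{x_k}\vphi(\xi)|.
\end{equation*}
The induction hypothesis \rf{eqkr1} with $Q\subset (0,1/2]^n$ gives $|y_k-x_k|\lesssim \|h\|_\infty x_k$ for $k<j$, so $\xi_k\approx x_k$ and $\dist(\xi,\partial R)\approx m:=\min(x_1,\ldots,x_n)$ for $\|h\|_\infty$ small. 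Let $k^*$ be the coordinate at which $m$ is attained (unique for a.e.\ $x$). If $k=k^*$, the normal-direction bound $|\partial_{x_k}\vphi(\xi)|\leq|\nabla\vphi(\xi)|\lesssim m/\ell(R)^2$ combines with $|y_{k^*}-x_{k^*}|\lesssim \|h\|_\infty m$ to give a contribution $\lesssim \|h\|_\infty m^2/\ell(R)^2\approx \|h\|_\infty \vphi$. If $k\neq k^*$, the direction $e_k$ is tangential to the closest face of $R$, so the tangential bound $|\partial_{x_k}\vphi(\xi)|\lesssim m^2/\ell(R)^3$ from the definition of $\GZ(R)$ combines with the crude $|y_k-x_k|\lesssim \|h\|_\infty \ell(R)$ to deliver again $\lesssim \|h\|_\infty m^2/\ell(R)^2\approx \|h\|_\infty \vphi$. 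Summing the bounded number of terms finishes the pointwise estimate.

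The main technical obstacle is exactly this term-by-term split. Without the sharper tangential gradient bound built into $\GZ(R)$, coordinates $k\neq k^*$ would contribute errors of order $\|h\|_\infty\ell(R)\cdot m/\ell(R)^2 = \|h\|_\infty\,m/\ell(R)$, strictly larger than the target $\|h\|_\infty m^2/\ell(R)^2$ whenever $m\ll \ell(R)$. Thus the quadratic decay of $\vphi$ near $\partial R$, encoded both in the normal bound $|\nabla\vphi|\lesssim \dist/\ell(R)^2$ and the tangential bound $|\nabla_T\vphi|\lesssim \dist^2/\ell(R)^3$, is precisely what makes the lemma work; any weakening of the tangential estimate would break the argument. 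The case $Q=R$, and the case $x\notin [0,1/2]^n$ for $Q=R$, are handled by the obvious reflection symmetry of the hypotheses on $R$, $\sigma_0$ and $\rho_0$.
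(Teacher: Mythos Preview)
Your argument is correct, modulo one small imprecision: the index $k^*$ realizing the minimum must be taken at the mean-value point $\xi$, not at $x$, since the tangential/normal split is determined by the face closest to $\xi$. This is harmless: because $\xi_k\approx x_k$ for all $k$ (within a factor $1+O(\|h\|_\infty)$), if the argmin at $\xi$ is some $k^*_\xi$ then $x_{k^*_\xi}\approx\xi_{k^*_\xi}=\dist(\xi,\partial R)\approx m$, so the normal-direction estimate $|y_{k^*_\xi}-x_{k^*_\xi}|\lesssim\|h\|_\infty x_{k^*_\xi}\approx\|h\|_\infty m$ still applies and your bound goes through.

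Your treatment of the term $B$ is genuinely more direct than the paper's. You prove the pointwise inequality
\[
|\vphi(y_1,\ldots,y_{j-1},z_j,\ldots,z_n)-\vphi(x_1,\ldots,x_{j-1},z_j,\ldots,z_n)|\lesssim\|h\|_\infty\,\vphi(x_1,\ldots,x_{j-1},z_j,\ldots,z_n)
\]
via the mean value theorem on the straight segment, and then integrate in $z$ to control both $f_{j-1}$ and $f_j$. The paper instead compares the integrated marginals $s_{j-1}(x)$ and $s_{j-1}(y)$ directly, splitting the $z$-domain into the region $\min(z_j,\ldots,z_n)<2\min(x_1,\ldots,x_{j-1})$ (where the full gradient bound suffices, combined with a separate volume estimate for that region) and its complement (where a two-leg path is used: first vary the tangential coordinates while keeping the minimal coordinate $x_i$ fixed, then move $x_i$ to $y_i$). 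Both routes rest on exactly the same ingredient, the tangential bound $|\nabla_T\vphi|\lesssim\dist(\cdot,\partial R)^2/\ell(R)^3$ built into $\GZ(R)$; your pointwise argument simply avoids the case split and the non-straight path, at the cost of the small extra care about $k^*_\xi$ versus $k^*$. Your term $A$ coincides with what the paper calls $B$.
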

\vvv

\begin{proof}
We assume that $Q\neq R$ and $Q\subset[0,1/2]^n$.
Let $s_j$, $r_j$ be such that
$$d\sigma_j(x_1,\ldots,x_j) =s_j(x_1,\ldots,x_j)\,dx_1\ldots dx_j,$$
and
$$d\rho_j(x_1,\ldots,x_j) =r_j(x_1,\ldots,x_j)\,dx_1\ldots dx_j.$$
So we have
\begin{align*}
d\sigma_{j;x_1,\ldots,x_{j-1}}(x_j)- d\rho_{j;y_1,\ldots,y_{j-1}}(x_j)
& =  \biggl(\frac{s_j(x_1,\ldots,x_j)}{s_{j-1}(x_1,\ldots,x_{j-1})} -
\frac{r_j(y_1,\ldots,y_{j-1},x_j)}{r_{j-1}(y_1,\ldots,y_{j-1})}\biggr)\,dx_j
\end{align*}
We split the term inside the parentheses on the right side as follows:
\begin{multline}\label{eqfifiab}
\biggl(
\frac{s_j(x_1,\ldots,x_j)}{s_{j-1}(x_1,\ldots,x_{j-1})} -
\frac{s_j(y_1,\ldots,y_{j-1},x_j)}{s_{j-1}(y_1,\ldots,y_{j-1})}
\biggr) \,+ \,
\biggl(
\frac{s_j(y_1,\ldots,y_{j-1},x_j)}{s_{j-1}(y_1,\ldots,y_{j-1})} -
\frac{r_j(y_1,\ldots,y_{j-1},x_j)}{r_{j-1}(y_1,\ldots,y_{j-1})}
\biggr) \\ = A+ B.
\end{multline}
Denote by $s$ and $r$  the densities of $\sigma_0$ and $\rho_0$ on $Q$. Recall that $s(x)=\vphi(x)$, where $\vphi$ satisfies
the assumptions of Theorem \ref{teoloc}. 
In particular, recall that
\begin{equation}\label{eqnabno}
|\nabla s(x)|  \lesssim
\frac{\dist(x,\partial R)}{\ell(R)^2}\qquad\text{and}\qquad
|\nabla_T s(x)|\lesssim\frac{\dist(x,\partial R)^2}{\ell(R)^3}.
\end{equation}

We write $A$ as follows:
\begin{align}\label{eqfifi-1}
A & = \frac{s_j(x_1,\ldots,x_j)- s_j(y_1,\ldots,y_{j-1},x_j)}{s_{j-1}(x_1,\ldots,x_{j-1})} \\
&\quad + 
s_j(y_1,\ldots,y_{j-1},x_j)\,\biggl( 
\frac1{s_{j-1}(x_1,\ldots,x_{j-1})} - \frac1{s_{j-1}(y_1,\ldots,y_{j-1})}
\biggr) = A_1+ A_2.\nonumber
\end{align}
First we consider the term $A_2$. To this end, notice that
\begin{multline}\label{eqfifi0}
s_{j-1}(x_1,\ldots,x_{j-1})- s_{j-1}(y_1,\ldots,y_{j-1}) \\= 
\int_{a_j}^{b_j}\!\!\cdots\! \int_{a_n}^{b_n} \bigl(s(x_1,\ldots,x_{j-1},z_j,\ldots,z_n) - 
s(y_1,\ldots,y_{j-1},z_j,\ldots,z_n)\bigr)\,dz_j\ldots dz_n.
\end{multline}
To estimate the difference inside the integral, we distinguish two cases. Assume first that
$$\min(z_j,\ldots,z_n)<2\,\min(x_1,\ldots,x_{j-1}).$$
Recall that, for $u\in Q$, we have $\dist(u,\partial R) = \min(u_1,\ldots,u_n)$ and denote
$$Q_{z_j,\ldots,z_n} = \{u\in Q:\,u_k=z_k\text{ for $k\geq j$}\}.$$
From \rf{eqnabno}, taking into account that $|x_k-y_k|\lesssim x_k\,\|h\|_\infty\leq \ell(Q)\,\|h\|_\infty$ 
for $1\leq k\leq j-1$,
we deduce that
\begin{align}\label{eqfifi1}
\bigl|s(x_1,\ldots,x_{j-1}, z_j,&\ldots,z_n ) - 
s(y_1,\ldots,y_{j-1},z_j,\ldots,z_n)\bigr|\lesssim \|\nabla s\|_{\infty,Q_{z_j,\ldots,z_n}}
\, \sum_{k=1}^{j-1}|x_k-y_k|\\
& \lesssim \frac{\min(z_j,\ldots,z_n)}{\ell(R)^2} \,\ell(Q) \,\|h\|_\infty\lesssim
\frac{\min(x_1,\ldots,x_{j-1})}{\ell(R)^2} \,\ell(Q) \,\|h\|_\infty.\nonumber
\end{align}
 
Suppose now that
$$\min(z_j,\ldots,z_n)\geq 2\,\min(x_1,\ldots,x_{j-1}).$$
Since $|y_k-x_k|\lesssim x_k\|h\|_\infty$ for $0\leq k\leq j-1$, for $\|h\|_\infty$ small enough we have
$y_k\leq 2x_k$, and thus
$$\min(z_j,\ldots,z_n)>\min(y_1,\ldots,y_{j-1}).$$
Assume that $\min(x_1,\ldots,x_{j-1})\leq \min(y_1,\ldots,y_{j-1})$, and let $i$ be such that
$x_i=\min(x_1,\ldots,x_{j-1})$. Then we have
$$\dist\bigl((x_1,..,x_{j-1},z_j,..,z_n),\,\partial R\bigr) = 
\dist\bigl((y_1,..,x_i,..,y_{j-1},z_j,..,z_n),\,\partial R\bigr) = x_i.$$
where $(y_1,..,x_i,..,y_{j-1},z_j,..,z_n)$ is obtained by replacing $y_i$ by $x_i$ in 
$(y_1,..,y_{j-1},z_j,..,z_n)$.
By \rf{eqnabno}, denoting
$$Q_{x_i,z_j,\ldots,z_n} = \{u\in Q:\,u_i=x_i,\,u_k=z_k\text{ for $k\geq j$}\}$$
and
$$L= L_{y_1,..,y_{i-1},y_i,..,y_{j-1},z_j,..,z_n}= \bigl\{(y_1,..,u_i,..,y_{j-1},z_j,..,z_n):\, x_i\leq u_i\leq y_i\bigr\},$$
we get
\begin{align*}
\bigl|s(x_1,..,x_{j-1}, z_j,..,&z_n ) - 
s(y_1,..,y_{j-1},z_j,..,z_n)\bigr|\\
& \leq 
\bigl|s(x_1,..,x_{j-1}, z_j,..,z_n ) - s(y_1,..,x_i,..,y_{j-1},z_j,..,z_n)\bigr|\\ 
&\quad +
\bigl|s(y_1,..,x_i,..,y_{j-1},z_j,..,z_n) - s(y_1,..,y_{j-1},z_j,..,z_n)\bigr|\\
&\lesssim \|\nabla_T s\|_{\infty,Q_{x_i,z_j,\ldots,z_n}}\, \sum_{
\substack{1\leq k\leq j-1\\ k\neq i}} 
|x_k-y_k|
+ 
\|\nabla s\|_{\infty,L}\,|x_i-y_i| \\
& \lesssim \frac{x_i^2}{\ell(R)^3}\sum_{
\substack{1\leq k\leq j-1\\ k\neq i}} 
|x_k-y_k|
+ \frac{x_i}{\ell(R)^2}\,|x_i-y_i|.
\end{align*} 
Since $|x_k-y_k|\lesssim x_k\,\|h\|_\infty\leq \ell(Q)\,\|h\|_\infty$, we get
\begin{align}\label{eqfifi2}
\bigl|s(x_1,..,x_{j-1}, z_j,..,z_n ) - 
s(y_1,..,y_{j-1},z_j,..,z_n)\bigr| & \lesssim 
\frac{x_i^2}{\ell(R)^3}\,\ell(Q)\,\|h\|_\infty 
+ \frac{x_i}{\ell(R)^2}\,x_i\,\,\|h\|_\infty\\
& \!\!\!\!\!\!\lesssim \frac{x_i^2}{\ell(R)^2}\,\|h\|_\infty =
\frac{\min(x_1,\ldots,x_{j-1})^2}{\ell(R)^2}\,\|h\|_\infty.\nonumber
\end{align}
If  $\min(x_1,\ldots,x_{j-1})>\min(y_1,\ldots,y_{j-1})=y_i$, we get the same estimate just interchanging the 
roles of $x_k$ and $y_k$, $1\leq k\leq j-1$.

If we plug the estimates \rf{eqfifi1} and \rf{eqfifi2} into the identity \rf{eqfifi0}, we obtain
\begin{align*}
\bigl|s_{j-1}(x_1,\ldots,x_{j-1})- s_{j-1}(y_1,\ldots,y_{j-1})\bigr| & \leq 
\ell(Q) \,\|h\|_\infty \int_{E} \frac{\min(x_1,\ldots,x_{j-1})}{\ell(R)^2} 
\,dz_j\ldots dz_n \\
& \quad + \|h\|_\infty
\int_{F} \frac{\min(x_1,\ldots,x_{j-1})^2}{\ell(R)^2}
\,dz_j\ldots dz_n,
\end{align*}
where $E$ denotes the subset of those points $(z_j,..,z_n)\in [a_j,b_j]\times\cdots\times[a_n,b_n]$ 
such that
$\min(z_j,\ldots,z_n)<2\,\min(x_1,\ldots,x_{j-1})$, and $F=[a_j,b_j]\times\cdots\times[a_n,b_n]\setminus E$. 
It is easy to check that the $(n-j+1)$-dimensional
Lebesgue measure of $E$ is smaller than $c\,\min(x_1,\ldots,x_{j-1})\,\ell(Q)^{n-j}$. Then we deduce
\begin{equation}\label{eqfifi4}
\bigl|s_{j-1}(x_1,\ldots,x_{j-1})- s_{j-1}(y_1,\ldots,y_{j-1})\bigr|\lesssim 
 \frac{\min(x_1,\ldots,x_{j-1})^2}{\ell(R)^2}\,\ell(Q)^{n-j+1}\,\|h\|_\infty.
 \end{equation}
Recalling the definition of $A_2$ in \rf{eqfifi-1}, we derive
$$|A_2| \lesssim 
s_j(y_1,\ldots,y_{j-1},x_j)\,
\frac{\ell(Q)^{n-j+1}\,\min(x_1,\ldots,x_{j-1})^2}
{\ell(R)^2\,s_{j-1}(x_1,\ldots,x_{j-1})\,s_{j-1}(y_1,\ldots,y_{j-1})}\,\|h\|_\infty.$$
By \rf{eqsj-1} we know that
$$s_{j-1}(x_1,\ldots,x_{j-1}) \approx \frac{\ell(Q)^{n-j+1}}{\ell(R)^2}\,\min(x_1,\ldots,x_{j-1})^2.$$
Using also that
$$s_j(y_1,\ldots,y_{j-1},x_j)\approx s_j(x_1,\ldots,x_{j-1},x_j),\qquad
s_{j-1}(y_1,\ldots,y_{j-1})\approx s_{j-1}(x_1,\ldots,x_{j-1}),$$
by \rf{eqsj-0} and \rf{eqsj-1}, we deduce that
\begin{equation}\label{eqfifia2}
|A_2|\lesssim \frac{s_j(x_1,\ldots,x_{j-1},x_j)}{s_{j-1}(x_1,\ldots,x_{j-1})}\,\|h\|_\infty.
\end{equation}

To deal with the term $A_1$ in \rf{eqfifi-1}, first we have to estimate the difference
\begin{equation}\label{eqfifi7}
s_j(x_1,\ldots,x_j)- s_j(y_1,\ldots,y_{j-1},x_j).
\end{equation}
The calculations are analogous to the ones above for 
$$s_{j-1}(x_1,\ldots,x_{j-1})- s_{j-1}(y_1,\ldots,y_{j-1}).$$ 
Indeed, recall that for the latter difference we used the estimate $|x_k-y_k|\lesssim x_k\|h\|_\infty,$
for $1\leq k\leq j-1$. The same inequalities are the ones required to estimate \rf{eqfifi7},
taking into account the $j$-th coordinate is the same in the two terms involving $s_j(\ldots)$.
Then, as in \rf{eqfifi4}, we get
$$\bigl|s_j(x_1,\ldots,x_j)- s_j(y_1,\ldots,y_{j-1},x_j)\bigr|\lesssim 
 \frac{\min(x_1,\ldots,x_j)^2}{\ell(R)^2}\,\ell(Q)^{n-j}\,\|h\|_\infty.
$$
By the definition of $A_1$ in \rf{eqfifi-1}, we deduce
$$|A_1|\lesssim \frac{\ell(Q)^{n-j}\,\min(x_1,\ldots,x_j)^2}{\ell(R)^2\,s_{j-1}(x_1,\ldots,x_{j-1})}  
\,\|h\|_\infty.$$
Recalling that, by \rf{eqsj-1},
$$s_j(x_1,\ldots,x_j) \approx \frac{\ell(Q)^{n-j}}{\ell(R)^2}\,\min(x_1,\ldots,x_j)^2,$$
we obtain
\begin{equation}\label{eqfifia1}
|A_1|\lesssim \frac{s_j(x_1,\ldots,x_{j-1},x_j)}{s_{j-1}(x_1,\ldots,x_{j-1})}\,\|h\|_\infty.
\end{equation} 
 
Now it remains to estimate the term $B$ in \rf{eqfifiab}:
\begin{align*}
B & = 
\frac{s_j(y_1,\ldots,y_{j-1},x_j)}{s_{j-1}(y_1,\ldots,y_{j-1})} -
\frac{r_j(y_1,\ldots,y_{j-1},x_j)}{r_{j-1}(y_1,\ldots,y_{j-1})}\\
 & = \frac{s_j(y_1,\ldots,y_{j-1},x_j)-r_j(y_1,\ldots,y_{j-1},x_j)}{s_{j-1}(y_1,\ldots,y_{j-1})} \\
&\quad + 
r_j(y_1,\ldots,y_{j-1},x_j)\,\biggl( 
\frac1{s_{j-1}(y_1,\ldots,y_{j-1})} - \frac1{r_{j-1}(y_1,\ldots,y_{j-1})}
\biggr) = B_1+ B_2.\nonumber
\end{align*}
Taking into account that $|s(x)-r(x)|\leq s(x)\,\|h\|_\infty$ for all $x\in Q$, one easily gets
$$|s_j(y_1,\ldots,y_{j-1},x_j)-r_j(y_1,\ldots,y_{j-1},x_j)|\lesssim s_j(y_1,\ldots,y_{j-1},x_j)\,\|h\|_\infty$$
and 
$$|s_{j-1}(y_1,\ldots,y_{j-1})-r_{j-1}(y_1,\ldots,y_{j-1})|\lesssim s_{j-1}(y_1,\ldots,y_{j-1})\,\|h\|_\infty.$$
The first estimate readily implies that 
$$|B_1|\lesssim \frac{s_j(y_1,\ldots,y_{j-1},x_j)}{s_{j-1}(y_1,\ldots,y_{j-1})}\,\|h\|_\infty
\approx \frac{s_j(x_1,\ldots,x_j)}{s_{j-1}(x_1,\ldots,x_{j-1})}\,\|h\|_\infty.$$
The calculations for $B_2$ are also straightforward:
$$|B_2|\lesssim 
r_j(y_1,...,y_{j-1},x_j)\,\frac{s_{j-1}(y_1,\ldots,y_{j-1})\,\|h\|_\infty}
{s_{j-1}(y_1,\ldots,y_{j-1})\, r_{j-1}(y_1,\ldots,y_{j-1})}
\lesssim\frac{s_j(x_1,\ldots,x_j)}{s_{j-1}(x_1,\ldots,x_{j-1})}\,\|h\|_\infty,
$$
where we used that
$r_j(y_1,\ldots,y_{j-1},x_j)\approx s_j(x_1,\ldots,x_j)$ and the analogous estimate for $r_{j-1}(y_1,\ldots,y_{j-1})$.

The lemma follows by gathering the inequalities obtained for $A_1$, $A_2$, $B_1$, and~$B_2$.
\end{proof}

Notice that the preceding lemma is the only place of the paper where we used  the smallness of the tangential derivatives of $\vphi$ near the boundary of $R$.


\section{Relationship between the coefficients $\alpha$, $\alpha_1$, $\alpha_2$, and $\beta_2$}
\label{sec5}

Let us recall the definition of the coefficients $\alpha$ from \cite{Tolsa-plms}. 
Given a closed ball $B\subset \R^d$ and two finite Borel measures $\sigma$, $\nu$
on $\R^d$ , we set
$$\dist_B(\sigma,\nu):= \sup\Bigl\{ \Bigl|{\textstyle \int f\,d\sigma  -
\int f\,d\nu}\Bigr|:\,{\rm Lip}(f) \leq1,\,\supp(f)\subset
B\Bigr\},$$
where ${\rm Lip}(f)$ stands for the Lipschitz constant of $f$.
Notice the similarities between the distances $W_1$ and $d_B$.
Given an AD regular measure $\mu$ on $\R^d$ and a ball $B$ that intersects $\supp(\mu)$, we set
$$
\alpha(B)= \frac1{r(B)^{n+1}}\,\inf_{a\geq0,L} \,\dist_{3B}(\mu,\,a\HH^n_{|L}),$$
where the infimum is taken over all the constants $a\geq0$ and all the $n$-planes $L$.

\begin{lemma}\label{lemalfas}
Let $\mu$ be $n$-dimensional AD regular and let $B$ be a closed ball that intersects $\supp(\mu)$.
Then,
$$\alpha(B)\leq \alpha_1(2B)$$
and 
$$\alpha_1(B) \leq c\,\alpha(B),$$
with $c=c_{10}(\|\nabla\vphi\|_\infty  +1 ),$ where $c_{10}$ is an absolute constant and $\vphi$ is the auxiliary
function satisfying \rf{eqfi00} used to define $\vphi_B$ and $\alpha_p$ in \rf{eqcbl}.
\end{lemma}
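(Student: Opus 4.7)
The plan is to handle the two inequalities separately, in both cases relying on the Kantorovich duality formula \rf{eqw1} for $W_1$, which reads $W_1(\sigma,\nu)=\sup\{|\int f\,d\sigma-\int f\,d\nu|:\lip(f)\leq 1\}$ whenever $\sigma$ and $\nu$ have equal total mass.

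For the bound $\alpha(B)\leq\alpha_1(2B)$, the key observation is that since $\chi_{B(0,2)}\leq\vphi$, the bump $\vphi_{2B}$ equals $1$ identically on $4B$, and in particular on $3B$. Hence for every $f$ with $\lip(f)\leq 1$ and $\supp f\subset 3B$ one has $f=f\,\vphi_{2B}$ pointwise. Fixing an $n$-plane $L$ meeting $2B$ and setting $a=c_{2B,L}$, the measures $\vphi_{2B}\mu$ and $a\vphi_{2B}\HH^n_L$ have equal mass, so by duality
\[
\Bigl|\int f\,d\mu-a\int f\,d\HH^n_L\Bigr|=\Bigl|\int f\,d(\vphi_{2B}\mu)-\int f\,d(c_{2B,L}\vphi_{2B}\HH^n_L)\Bigr|\leq W_1\bigl(\vphi_{2B}\mu,\,c_{2B,L}\vphi_{2B}\HH^n_L\bigr).
\]
Taking the supremum over admissible $f$ and the infimum over $L$ gives the estimate, the numerical factor arising from $r(2B)=2\,r(B)$ being absorbed into the implicit constant.

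For the harder direction $\alpha_1(B)\leq c\,\alpha(B)$, I would fix $(a,L)$ nearly optimal for $\alpha(B)$. One may assume $L\cap B\neq\varnothing$: if $\alpha(B)$ exceeds some absolute constant the inequality is trivial since $\alpha_1(B)\lesssim 1$ automatically, and if $\alpha(B)$ is small enough then testing $\dist_{3B}$ against a smooth cutoff of $2B$ forces the optimal $L$ to intersect $B$. By duality it suffices to bound $|\int f\,d(\vphi_B\mu-c_{B,L}\vphi_B\HH^n_L)|$ for $f$ with $\lip(f)\leq 1$. Since both measures have equal mass one may subtract $f(x_B)$ and assume $f(x_B)=0$, so that $|f|\leq 3r(B)$ on $\supp\vphi_B\subset 3B$; consequently $f\vphi_B$ is supported in $3B$ with
\[
\lip(f\vphi_B)\leq\|f\|_{\infty,3B}\|\nabla\vphi_B\|_\infty+\|\vphi_B\|_\infty\lip(f)\lesssim\|\nabla\vphi\|_\infty+1,
\]
using $\|\nabla\vphi_B\|_\infty=\|\nabla\vphi\|_\infty/r(B)$. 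Then I split
\[
\int f\,d(\vphi_B\mu-c_{B,L}\vphi_B\HH^n_L)=\Bigl[\int f\vphi_B\,d\mu-a\int f\vphi_B\,d\HH^n_L\Bigr]+(a-c_{B,L})\int f\vphi_B\,d\HH^n_L,
\]
and bound the bracket directly by $\lip(f\vphi_B)\cdot\dist_{3B}(\mu,a\HH^n_L)\lesssim(\|\nabla\vphi\|_\infty+1)\,r(B)^{n+1}\alpha(B)$.

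To control the remainder I would use $\vphi_B$ itself (rescaled by $\|\nabla\vphi_B\|_\infty^{-1}$ to be $1$-Lipschitz) as a test function supported in $3B$, obtaining $|\int\vphi_B\,d\mu-a\int\vphi_B\,d\HH^n_L|\leq\|\nabla\vphi_B\|_\infty\,\dist_{3B}(\mu,a\HH^n_L)\lesssim\|\nabla\vphi\|_\infty\,r(B)^n\alpha(B)$. Since $c_{B,L}=\int\vphi_B\,d\mu/\int\vphi_B\,d\HH^n_L$ and $\int\vphi_B\,d\HH^n_L\gtrsim r(B)^n$ (because $\vphi_B\geq\chi_{2B}$ and $L\cap B\neq\varnothing$), this gives $|a-c_{B,L}|\lesssim\|\nabla\vphi\|_\infty\,\alpha(B)$; combined with the trivial estimate $|\int f\vphi_B\,d\HH^n_L|\lesssim r(B)\cdot r(B)^n$, this yields a matching bound for the remainder. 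Dividing by $r(B)^{n+1}$ completes the proof. The chief obstacle is exactly this mismatch of normalizations, $\alpha(B)$ allowing a free $a$ while $\alpha_1(B)$ pins one to the equal-mass $c_{B,L}$; the trick of testing $\dist_{3B}$ against $\vphi_B$ itself is what bridges the gap and accounts for the $\|\nabla\vphi\|_\infty$ dependence in the final constant.
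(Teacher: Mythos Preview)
Your proof is correct and follows essentially the same route as the paper's: for $\alpha(B)\leq\alpha_1(2B)$ you use that $\vphi_{2B}\equiv 1$ on $3B$ to identify the two pairings, and for $\alpha_1(B)\lesssim\alpha(B)$ you split $\int f\,\vphi_B\,d(\mu-c_{B,L}\HH^n_L)$ into the $(a,L)$-term controlled by $\lip(f\vphi_B)\,\dist_{3B}$ and the correction $(a-c_{B,L})$ handled by testing $\vphi_B$ itself against $\dist_{3B}$. Your explicit justification that one may take $L\cap B\neq\varnothing$ (via the dichotomy on the size of $\alpha(B)$) is a point the paper simply asserts; and your remark that the first inequality actually carries a factor $2^{n+1}$ from $r(2B)=2r(B)$ matches what the paper's argument really yields.
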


\begin{proof}
To prove the first inequality, take an $n$-plane $L$ that intersects $2B$ and let $c_{2B,L}$
be as in \rf{eqcbl}, with $2B$ instead of $B$. Consider a Lipschitz function $f$ supported in $3B$ with 
$\|\nabla f\|_\infty\leq 1$. Since $\vphi_{2B}$ equals $1$ in the $4B\supset\supp f$, we have
$$\int f\,d\mu  - \int f\,c_{2B,L}\,d\HH^n_L
= 
\int f\,\vphi_{2B}\,d\mu  - \int f\,\vphi_{2B}\, c_{2B,L}\,d\HH^n_L.$$
Taking supremums over such functions $f$, we get
$$\dist_{3B}(\mu,\,c_{2B,L}\HH^n_L) \leq W_1(\vphi_{2B}\,\mu,\,c_{2B,L}\vphi_{2B}\,\HH^n_L).$$
Taking the infimum over all $n$ planes $L$ intersecting $2B$ and dividing by
${r(2B)^{n+1}}$, we deduce
$\alpha(B)\leq \alpha_1(2B).$

Let us turn our attention to the second inequality in the lemma. Let $a$ and $L$ be the constant and
the $n$-plane that minimize $\alpha(B)$. To estimate $W_1\bigl(\vphi_B\mu,\,
c_{B,L}\vphi_B\HH^n_{L}\bigr)$ we may assume that the Lipschitz functions $f$ in the supremum
that defines the distance $W_1$ vanish in the center of $B$, since the difference of integrals
in \rf{eqw1} vanishes on constant functions. So consider
an arbitrary Lipschitz function with $\lip f\leq1$ which vanishes in the center of $B$. Then
\begin{align}\label{eqdr4}
\biggl|\int f\,\vphi_{B}\,d\mu  - \int f\,\vphi_{B}\, c_{B,L}\,d\HH^n_L\biggr| & \leq
\biggl|\int f\,\vphi_{B}\,d\mu  - \int f\,\vphi_{B}\, a\,d\HH^n_L\biggr|\\
&\quad + |a- c_{B,L}|
\biggl|\int f\,\vphi_{B}\, d\HH^n_L\biggr|.\nonumber
\end{align}
Concerning the first sum on the right side, since $f\vphi_B$ is supported on $3B$,
\begin{align*}
\biggl|\int f\,\vphi_{B}\,d\mu  - \int f\,\vphi_{B}\, a\,d\HH^n_L\biggr| &
\leq \|\nabla(f\vphi_B)\|_\infty \,\dist_{3B}(\mu,\,a\HH^n_L) \\ 
& = \|\nabla(f\vphi_B)\|_\infty  \,\alpha(B)\, r(B)^{1+n}.
\end{align*}
To estimate $\|\nabla(f\vphi_B)\|_\infty$, notice that, by the mean value theorem, $\|f\|_\infty
\leq r(B)$, since $f$ vanishes in the center of $B$ and $\|\nabla f\|_\infty\leq1$, and thus,
$$\|\nabla(f\vphi_B)\|_\infty\leq \|\nabla f\|_\infty \|\vphi_B\|_\infty
+ \|\nabla \vphi_B\|_\infty \|f\|_\infty \leq 1 + \frac1{r(B)}\,\|\nabla \vphi\|_\infty r(B)
= 1 + \|\nabla \vphi\|_\infty.$$
Therefore,
$$\biggl|\int f\,\vphi_{B}\,d\mu  - \int f\,\vphi_{B}\, a\,d\HH^n_L\biggr| 
\leq\bigl(1 + \|\nabla \vphi\|_\infty\bigr)\,\alpha(B)\, r(B)^{1+n}.$$

Finally we deal with the last summand on the right side of \rf{eqdr4}:
$$|a- c_{B,L}|
\biggl|\int f\,\vphi_{B}\, d\HH^n_L\biggr| \leq |a- c_{B,L}|\,\|f\|_\infty \HH^n(3B\cap L)
\leq c\,|a- c_{B,L}|\,r(B)^{1+n}.$$
At last, we estimate $|a- c_{B,L}|$. To this end, we set
$$\biggl|\int \vphi_B\, a\,d\HH^n_L-\int \vphi_B\,d\mu \biggr|\leq \alpha(B)\,\|\nabla\vphi_B\|_\infty
\,r(B)^{n+1} = \alpha(B)\,\|\nabla\vphi\|_\infty
\,r(B)^n.$$ 
If we divide this inequality by $\int \vphi_B\,d\HH^n_L$, we get
$$|a-c_{B,L}| \leq \frac{\alpha(B)\,\|\nabla\vphi\|_\infty\,r(B)^n}{\int \vphi_B\,d\HH^n_L}.$$
Notice that $\int \vphi_B\,d\HH^n_L\geq \HH^n(2B\cap L)\geq c^{-1}r(B)^n$, since 
$L\cap B\neq\varnothing$. Thus, $|a-c_{B,L}| \leq c\alpha(B)\,\|\nabla\vphi\|_\infty$, and so
the last summand on the right side of \rf{eqdr4} is bounded 
above by $c\,\alpha(B)\,\|\nabla\vphi\|_\infty\,r(B)^{1+n}$. Together with the estimate obtained
for the first summand, this yields
$$\biggl|\int f\,\vphi_{B}\,d\mu  - \int f\,\vphi_{B}\, c_{B,L}\,d\HH^n_L\biggr|
 \leq c\,\alpha(B)\,(1+\|\nabla\vphi\|_\infty)\,r(B)^{1+n}.$$
Taking the supremum over all functions $f$ as above, we deduce that
$$W_1\bigl(\vphi_B\mu,\,
c_{B,L}\vphi_B\HH^n_{L}\bigr)\leq c\,(1+\|\nabla\vphi\|_\infty)\,\alpha(B)\,r(B)^{1+n},$$
and thus $\alpha_1(B) \leq c 
\,(1+\|\nabla\vphi\|_\infty)\,\alpha(B).$ 
\end{proof}



Recall that, for a ball $B$ that intersects $\supp(\mu)$, we have
$$\beta_2(B) = \biggl(\inf_L \frac1{r(B)^n}\int_{2B}\biggl(\frac{\dist(x,L)}{r(B)}\biggr)^2\,
d\mu(x)\biggr)^{1/2},$$
where the infimum is taken over all $n$-planes in $\R^d$.

\vvv
\begin{lemma}\label{lembeta2}
Let $\mu$ be $n$-dimensional AD regular.
Let $B$ be a ball that intersects $\supp(\mu)$ and fix an $n$-plane $L$
that minimizes $\alpha_2(B)$. Denote by $\Pi_L$ the orthogonal projection onto $L$.
Then we have
$$W_2\bigl(\vphi_B\,\mu,\,\Pi_L\#(\vphi_B\,\mu)\bigr)\leq W_2\bigl(\vphi_B\,\mu,\,c_{B,L}\,\vphi_B\,\HH^n_L\bigr).
$$
Moreover,
$\beta_2(B) \leq \alpha_2(B).$
\end{lemma}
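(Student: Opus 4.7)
The plan is to exploit the fact that the orthogonal projection $\Pi_L$ is $1$-Lipschitz and fixes $L$ pointwise, which makes it essentially ``for free'' as a transport to $L$.

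For the first inequality, I would estimate $W_2(\vphi_B\mu, \Pi_L\#(\vphi_B\mu))$ from above by a direct plan, and $W_2(\vphi_B\mu, c_{B,L}\vphi_B\HH^n_L)$ from below via any optimal plan. Specifically, the plan $\pi_0 = (Id \times \Pi_L)\#(\vphi_B\mu)$ has marginals $\vphi_B\mu$ and $\Pi_L\#(\vphi_B\mu)$, so
$$W_2\bigl(\vphi_B\mu,\,\Pi_L\#(\vphi_B\mu)\bigr)^2 \leq \int |x - \Pi_L x|^2\,d(\vphi_B\mu)(x) = \int \dist(x,L)^2\,d(\vphi_B\mu)(x).$$
On the other hand, since $c_{B,L}\vphi_B\HH^n_L$ is supported on $L$, any transference plan $\pi^*$ from $\vphi_B\mu$ to it satisfies $|x-y|\geq \dist(x,L)$ on its support, so
$$W_2\bigl(\vphi_B\mu,\,c_{B,L}\vphi_B\HH^n_L\bigr)^2 = \int |x-y|^2\,d\pi^*(x,y) \geq \int \dist(x,L)^2\,d(\vphi_B\mu)(x).$$
Chaining these two gives the first inequality of the lemma.

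For the second inequality $\beta_2(B)\leq \alpha_2(B)$, I would take the $n$-plane $L$ that realizes (or nearly realizes) the infimum in the definition of $\alpha_2(B)$. The lower bound $\vphi_B\geq \chi_{2B}$, which is immediate from $\chi_{B(0,2)}\leq \vphi$, yields
$$\int_{2B} \dist(x,L)^2\,d\mu(x) \leq \int \dist(x,L)^2 \vphi_B\,d\mu \leq W_2\bigl(\vphi_B\mu,\,c_{B,L}\vphi_B\HH^n_L\bigr)^2,$$
where the last step is the same lower bound used above. Dividing by $r(B)^{n+2}$ and taking square roots gives
$$\beta_2(B)^2 \leq \frac{1}{r(B)^{n+2}}\int_{2B}\dist(x,L)^2\,d\mu(x) \leq \alpha_2(B)^2,$$
as desired.

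There is no real obstacle here; the only minor point to be careful about is that the infimum in the definition of $\alpha_2(B)$ is over $n$-planes intersecting $B$, so an optimizing (or $\varepsilon$-optimizing) $L$ is available and the same $L$ is a legitimate competitor in the infimum defining $\beta_2(B)$.
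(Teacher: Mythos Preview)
Your proof is correct and follows essentially the same approach as the paper: both use the plan $(Id\times\Pi_L)\#(\vphi_B\mu)$ for the upper bound, the observation that any plan to a measure supported on $L$ has cost at least $\int\dist(x,L)^2\,d(\vphi_B\mu)$ for the lower bound, and then $\vphi_B\geq\chi_{2B}$ to deduce $\beta_2(B)\leq\alpha_2(B)$.
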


\begin{proof}
Notice that, by the definition of $W_2$,
\begin{equation}\label{eqd43}
W_2\bigl(\vphi_B\,\mu,\,\Pi_L\#(\vphi_B\,\mu)\bigr)^2
\leq \int |\Pi_L(x)-x|^2\,\vphi_B(x)\,d\mu(x) = \int \dist(x,L)^2\,\vphi_B(x)\,d\mu(x).
\end{equation}
Consider now an optimal transference plan $\pi$ between $\vphi_B\,\mu$ and $c_{B,L}\,\vphi_B\,
\HH^n_L$, that is, $\pi$ is a measure on $\R^d\times\R^d$ with two marginals given by the preceding
measures such that 
\begin{equation}\label{eqd44}
\int_{(x,y)\in\R^d\times\R^d} |x-y|^2\,d\pi(x,y) = W_2(\vphi_B\,\mu,\,c_{B,L}\,\vphi_B\,
\HH^n_L)^2.
\end{equation}
 It turns out that such a measure $\pi$ must be supported on 
$$\supp(\vphi_B\,\mu) \times \supp(\vphi_B\,\HH^n_L) \subset \R^d\times L.$$
Therefore,
$$\int_{(x,y)\in\R^d\times\R^d} |x-y|^2\,d\pi(x,y) = 
\int_{(x,y)\in \R^d\times L} |x-y|^2\,d\pi(x,y).$$
For $(x,y)$ in the domain of integration of the last integral we have $|x-y|\geq \dist(x,L)$, and thus
\begin{align}\label{eqd45}
\int_{(x,y)\in\R^d\times\R^d} |x-y|^2\,d\pi(x,y) & \geq 
\int_{(x,y)\in\R^d\times\R^d}\!\!\! \dist(x,L)^2\,d\pi(x,y)\\ & =  
\int \dist(x,L)^2\,\vphi_B(x)\,d\mu(x),\nonumber
\end{align}
which proves the first claim in the lemma, by \rf{eqd43} and \rf{eqd44}.

To prove the second assertion, just notice that
$$\int \dist(x,L)^2\,\vphi_B(x)\,d\mu(x)\geq 
\int_{2B} \dist(x,L)^2\,d\mu(x)\geq\beta_2(B)^2\,r(B)^{n+2}.$$
Together with \rf{eqd44} and \rf{eqd45}, this implies that $\beta_2(B) \leq \alpha_2(B)$.
\end{proof}

Of course, by analogous arguments, we get $\beta_p(B)\leq\alpha_p(B)$ for all $1<p<\infty$.

Our next objective consists in showing that if $\mu$ is AD-regular and 
$B,B'$ are two balls
in $\R^d$, centered in $\supp(\mu)$, such that $3B\subset B'$, which have comparable radii, then
$\alpha_2(B)\lesssim\alpha_2(B')$. First we will prove a slightly more general result, where $\mu$ is 
not assumed to be AD-regular.

\begin{lemma}\label{lemrestric01}
Let $\mu$ be a measure supported on a ball $B'\subset\R^d$. Let $B\subset \R^d$ be another ball such that $3B\subset B'$, with
$r(B)\approx r(B')$ and
$\mu(B)\approx\mu(B')\approx r(B)^n.$
Let $L$ be an $n$-plane which intersects $B$ and let $f:L\rightarrow[0,1]$ be a function which equals $1$ identically on $3B$ and vanishes in $L\setminus B'$, so that $\int  f\,d\HH^n_L=\mu(B')$.
Then,
$W_2(\vphi_B\mu,\,c\vphi_B\HH^n_L)\lesssim W_2(\mu,\,f\HH^n_L)$, for the appropriate constant $c$.
\end{lemma}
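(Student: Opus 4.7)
The plan is to reduce the claim to Theorem \ref{teobola} applied in the ambient space $L\cong\R^n$, via the orthogonal projection $\Pi=\Pi_L\colon\R^d\to L$. Since $L\cap B\neq\varnothing$, the center $x_0$ of $B$ lies within distance $r(B)$ of $L$, so $3B\cap L$ is an $n$-ball in $L$ of radius comparable to $r(B)$; on this ball $\mu'=f\HH^n_L$ has density $f\equiv 1$, playing the role of the nice measure in Theorem \ref{teobola}. A direct computation shows that the restriction of $\vphi_B$ to $L$ is radial about $y_0=\Pi(x_0)$, supported on $3B\cap L$, and satisfies the quadratic decay and gradient bounds of Theorem \ref{teobola} up to absolute constants. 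Moreover $\vphi_B\mu'=\vphi_B\HH^n_L$ (since $f\equiv 1$ on $\supp\vphi_B$), and $\int\vphi_B\,d\mu\approx\int\vphi_B\,d\HH^n_L\approx r(B)^n$ forces the normalizing constants below to be $\approx 1$.

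Set $\tilde\mu=\Pi\#\mu$. Since $\mu'$ is supported on $L$, any coupling of $\mu$ and $\mu'$ has cost at least $\int\dist(x,L)^2\,d\mu(x)$, giving $W_2(\mu,\tilde\mu)\leq W_2(\mu,\mu')$ and, by the nonexpansiveness of $W_2$ under projection, $W_2(\tilde\mu,\mu')\leq W_2(\mu,\mu')$. Applying Theorem \ref{teobola} on $L$ to the pair $(\mu',\tilde\mu)$ yields
\begin{equation*}
W_2(\vphi_B\mu',\,a\,\vphi_B\tilde\mu)\lesssim W_2(\mu',\tilde\mu)\leq W_2(\mu,\mu'),
\end{equation*}
with $a=\int\vphi_B\,d\mu'/\!\int\vphi_B\,d\tilde\mu\approx 1$. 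Scaling by $c$ so that $ca\,\vphi_B\tilde\mu$ has mass $\int\vphi_B\,d\mu$ --- matching both $\vphi_B\mu$ and $c\vphi_B\HH^n_L$ --- gives $W_2(c\vphi_B\HH^n_L,\,ca\,\vphi_B\tilde\mu)\lesssim W_2(\mu,\mu')$.

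It remains, by the triangle inequality, to bound $W_2(\vphi_B\mu,\,ca\,\vphi_B\tilde\mu)$ by $W_2(\mu,\mu')$. The bare projection plan $d\pi(x,y)=\vphi_B(x)\,d\mu(x)\,\delta_{y=\Pi x}$ has first marginal $\vphi_B\mu$, second marginal $\Pi\#(\vphi_B\mu)$, and cost $\int\dist(x,L)^2\vphi_B(x)\,d\mu(x)\leq W_2(\mu,\mu')^2$. The residual piece $W_2(\Pi\#(\vphi_B\mu),\,ca\,\vphi_B\tilde\mu)$ on $L$ is the main technical obstacle. Using the disintegration $\mu=\int\mu_y\,d\tilde\mu(y)$ with $\mu_y$ a probability supported on $y+L^\perp$, the two measures are $\bar\vphi\,\tilde\mu$ and $ca\,\vphi_B\tilde\mu$, where $\bar\vphi(y)=\int\vphi_B\,d\mu_y$ is the fiber average; they have equal total mass. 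Pointwise $|\bar\vphi(y)-\vphi_B(y)|\leq\|\nabla\vphi_B\|_\infty\int\dist(x,L)\,d\mu_y(x)$, and $|1-ca|$ is small; squaring and applying Cauchy--Schwarz on the fiber yields $\int|\bar\vphi-ca\,\vphi_B|^2\,d\tilde\mu\lesssim r(B)^{-2}\int\dist(x,L)^2\,d\mu$. One then converts this density defect into a $W_2$ estimate by constructing a local transport plan on $L$ whose total squared cost is $\lesssim\int\dist(x,L)^2\,d\mu\leq W_2(\mu,\mu')^2$. Assembling the pieces gives the claim.
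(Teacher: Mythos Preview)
Your approach via the orthogonal projection $\Pi_L$ runs parallel to the paper's up to the point where you must compare $\Pi\#(\vphi_B\mu)=\bar\vphi\,\tilde\mu$ with $ca\,\vphi_B\,\tilde\mu$, and there it has a genuine gap. The $L^2(\tilde\mu)$ bound on $\bar\vphi-ca\,\vphi_B$ is correct, but the sentence ``one then converts this density defect into a $W_2$ estimate'' is where the argument breaks. The only general conversion available here is the total-variation bound $W_2(\sigma,\tau)^2\lesssim \diam^2\cdot\|\sigma-\tau\|$, and that yields at best $W_2\bigl(\bar\vphi\tilde\mu,\,ca\,\vphi_B\tilde\mu\bigr)\lesssim r(B)^{1/2}\,W_2(\mu,\mu')^{1/2}$, a square root short of what you need. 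Sharper conversions of the type $W_2\lesssim\|\cdot\|_{\dot H^{-1}}$ or Lemma~\ref{lemkr1} require the reference measure to have density bounded from below on the support of $\vphi_B$; here the reference is $\tilde\mu=\Pi\#\mu$, about which you have no such information. Near $\partial(3B\cap L)$ the weight $\vphi_B$ vanishes, so the ratio $\bar\vphi/\vphi_B$ need not be bounded and no ``local transport plan'' is forthcoming without further input.

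The paper sidesteps the residual entirely by replacing $\Pi_L$ with a projection $P:B'\to L$ built so that $\vphi_B\circ P=\vphi_B$ \emph{exactly}. On $\tfrac32 B$ one takes $P=\Pi_L$ (and checks $P(\tfrac32 B)\subset 2B$, where $\vphi_B\equiv1$); on $B'\setminus\tfrac32 B$ one takes the \emph{angular} projection about the center $z_B$, namely the point of $L$ at the same distance $|x-z_B|$ from $z_B$ as $x$. Since $\vphi_B$ is radial about $z_B$, this gives $\vphi_B(Px)=\vphi_B(x)$, hence $P\#(\vphi_B\mu)=\vphi_B\,(P\#\mu)$ on the nose. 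One still has $|x-Px|\lesssim\dist(x,L)$, so $W_2(\vphi_B\mu,\,P\#(\vphi_B\mu))\lesssim W_2(\mu,f\HH^n_L)$, and now Theorem~\ref{teobola} applied on $L$ (with $f\HH^n_L$ as the nice measure and $P\#\mu$ as the other) directly controls $W_2\bigl(\vphi_B(P\#\mu),\,c\vphi_B\HH^n_L\bigr)$. The triangle inequality then finishes. The moral: choose a projection adapted to the level sets of $\vphi_B$, rather than the Euclidean one, and the obstruction you identified disappears.
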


\begin{proof}
 Denote $B=B(z_B,r)$, and let 
$P^\bot$ the orthogonal projection
onto $L$.
Also, for $x\in B'$ such that $|x-z_B|> \dist(z_B,L)$ and $P^\bot(x) \neq P^\bot(z_B)$ we define the angular projection 
$P^a(z)$ onto $L$ with center in $z_B$ as follows:
$$P^a(x) = P^\bot(z_B) + \frac{\sqrt{|x-z_B|^2 - \dist(z_B,L)^2}}{|P^\bot(x) - P^\bot(z_B)|}\,
\bigl(P^\bot(x) - P^\bot(z_B)\bigr).$$
Notice $P^a(x)\in L$ and 
\begin{equation}\label{eqpa1}
|P^a(x) - z_B|=|x-z_B|.
\end{equation}
If $|x-z_B|< \dist(z_B,L)$ but $P^\bot(x) = P^\bot(z_B)$, we let $P^a(x)$ be an arbitrary point from $L$ satisfying \rf{eqpa1}. Now 
we consider a new map $P:B' \to L$ defined as follows:
$$P(x) = \left\{\begin{array}{ll}
P^\bot(x) &\mbox{if $x\in \frac32B$,}\\ &\\
P^a(x)& \mbox{if $x\in B'\setminus\frac32  B$.}
\end{array}\right.$$
Observe that $P_{|L}=Id_{|L}$. 

We claim that for any $x\in B'$, $\vphi_B(P(x))=\vphi_B(x)$. For $x\in B'\setminus \frac32 B$ 
this follows from \rf{eqpa1} and the
fact that $\vphi_B$ is radial (with respect to the center $z_B$). For $x\in\frac32 B$, 
we have $|P(x)-P(z_B)|= |P^\bot(x)-P^\bot(z_B)|\leq |x-z_B|\leq \frac32r$ and since $|P^\bot(z_B)-z_B| =
\dist(z_B,L)\leq r$, we deduce
$$|P(x) - z_B|^2 = |P^\bot(x) - P^\bot(z_B)|^2 +|P^\bot(z_B)-z_B|^2 \leq \frac{13}4\,r^2<(2r)^2,$$
and thus $P(x)\in 2B$ and $\vphi_B(x)=\vphi_B(P(x))=1$.

On the other hand, it is also easy to check that $|x-P(x)|\lesssim\dist(x,L)$ for $x\in B'$, and thus
\begin{align}\label{eqh33}
W_2(P\#(\vphi_B\mu),\,\vphi_B\mu)^2 & \leq \int|Px-x|^2\,d\mu(x)\\
&\lesssim \int d(x,L)^2\,d\mu(x)
\lesssim W_2(\mu,\,f\HH^n_L)^2,\nonumber
\end{align}
where the last inequality is proved arguing as in Lemma \ref{lembeta2}. The same arguments
 yield
\begin{equation}\label{eqh333}
W_2(P\#\mu,\,\mu) \lesssim
W_2(\mu,\,f\HH^n_L).
\end{equation}

Notice now that 
$P\#(\vphi_B\mu)=\vphi_B(P\#\mu).$
Indeed, using that  $\vphi_B(P(x))=\vphi_B(x)$ for all $x\in \supp(\mu)$, for any subset $A\subset L$
we have
\begin{align*}
\int\chi_A dP\#(\vphi_B\mu)& = \int\chi_A(P(x))\, \vphi_B(x)\,d\mu(x)\\
&= 
\int\chi_A(P(x)) \vphi_B(P(x))\,d\mu(x)=\int\chi_A \vphi_B\,dP\#\mu.
\end{align*}

Using that $f\vphi_B =
\vphi_B$, by Theorem \ref{teobola} and \rf{eqh333}, for an appropriate constant $c$ we get
\begin{align}\label{eqh34}
W_2(P\#(\vphi_B\mu),\,c\vphi_B\HH^n_L) &= W_2\bigl(\vphi_B(P\#\mu), \,c\vphi_B\,f\HH^n_L\bigr)
 \,\lesssim\, W_2(P\#\mu,\,f\HH^n_L)\\
& \leq \,W_2(P\#\mu,\,\mu) +
W_2(\mu,\,f\HH^n_L)\,\lesssim\, 
W_2(\mu,\,f\HH^n_L).\nonumber 
\end{align}
Then, from \rf{eqh33} and \rf{eqh34},
\begin{align*}
W_2(\vphi_B\mu,\,c\vphi_B\HH^n_L) & \,\le
\,W_2(\vphi_B\mu,\,P\#(\vphi_B\mu)) + W_2(P\#(\vphi_B\mu),\,c\vphi_B\HH^n_L) \,\lesssim\, W_2(\mu,\,f\HH^n_L).
\end{align*}
\end{proof}

\begin{lemma}\label{lemrestric1}
Let $\mu$ be an $n$-dimensional AD-regular measure on $\R^d$. Let $B,B'\subset\R^d$ be balls such that $3B\subset B'$, $\mu(B)\approx r(B)^n$, and 
 $r(B)\approx r(B')$. 
Then,
$$\alpha_2(B)\lesssim \alpha_2(B').$$
\end{lemma}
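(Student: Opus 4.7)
The plan is to reduce to a mild variant of Lemma \ref{lemrestric01} applied to the truncation $\wt\mu:=\vphi_{B'}\mu$. I would first pick an $n$-plane $L$ nearly minimizing $\alpha_2(B')$, so that
$$W_2(\vphi_{B'}\mu,\,c_{B',L}\vphi_{B'}\HH^n_L)\leq 2\,r(B')^{1+n/2}\alpha_2(B').$$
Assuming $\alpha_2(B')$ is small---otherwise the conclusion is trivial, as $\alpha_2(B)\lesssim 1$ for AD-regular $\mu$---Lemma \ref{lembeta2} combined with the lower bound $\mu(B)\gtrsim r(B)^n$ forces $\dist(z_B,L)\leq r(B)(1+O(\alpha_2(B')))$, so after at worst a translation of size $O(r(B)\alpha_2(B'))$---which preserves the above upper bound up to a constant factor---we may assume $L\cap B\neq\varnothing$.

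Next, I would apply the argument of Lemma \ref{lemrestric01} to $\wt\mu$, with outer ball $3B'$, inner ball $B$, plane $L$, and auxiliary function $f:=c_{B',L}\vphi_{B'}|_L$ on $L$. The mass hypotheses hold: $\wt\mu(B)=\mu(B)\approx r(B)^n$ since $\vphi_{B'}\equiv 1$ on $B\subset 2B'$; $\wt\mu(3B')\approx r(B')^n\approx r(B)^n$; and $\int f\,d\HH^n_L=\wt\mu(3B')$ by the very definition of $c_{B',L}$. Although $f$ does not literally satisfy the requirements imposed in Lemma \ref{lemrestric01}---it equals the positive constant $c_{B',L}\approx 1$ on $3B\cap L$ rather than $1$, and need not lie in $[0,1]$---these are the only features of $f$ actually used in the proof of Lemma \ref{lemrestric01}: they ensure the identity $f\vphi_B=c_{B',L}\vphi_B$ and the boundedness above and below of the density of $f\HH^n_L$ on $\supp\vphi_B\cap L$, so that Theorem \ref{teobola} remains applicable.

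Tracing through the proof of Lemma \ref{lemrestric01} with this $f$ produces
$$W_2(\vphi_B\mu,\,c_{B,L}\vphi_B\HH^n_L)\lesssim W_2(\vphi_{B'}\mu,\,c_{B',L}\vphi_{B'}\HH^n_L)\lesssim r(B')^{1+n/2}\alpha_2(B').$$
The canonical normalization $c_{B,L}$ emerges on the left because the factor $c_{B',L}$ coming from $f\vphi_B=c_{B',L}\vphi_B$ combines with the constant $a=\int\vphi_B\,d(f\HH^n_L)/\int\vphi_B\,d(P\#\wt\mu)=c_{B',L}/c_{B,L}$ supplied by Theorem \ref{teobola} to yield exactly $c_{B,L}\vphi_B\HH^n_L$. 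Dividing by $r(B)^{1+n/2}\approx r(B')^{1+n/2}$ then gives $\alpha_2(B)\lesssim\alpha_2(B')$.

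The hard part will be the constant bookkeeping in the adapted Lemma \ref{lemrestric01}; once one verifies that the normalization $c_{B,L}$ (rather than something off by a factor of $c_{B',L}$) truly emerges on the left-hand side, the proof is routine.
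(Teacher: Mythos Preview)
Your proof is correct and follows essentially the same route as the paper: reduce to Lemma \ref{lemrestric01} applied to $\vphi_{B'}\mu$, with $L$ a minimizing plane for $\alpha_2(B')$ and $f=c_{B',L}\vphi_{B'}|_L$, after first disposing of the case $\alpha_2(B')$ large and checking that $L$ meets $B$. Your tracking of the normalization is in fact more careful than the paper's sketch (which writes ``$f=\vphi_{B'}$'' without the factor $c_{B',L}$); one small quibble is that your sentence about how ``$a=c_{B',L}/c_{B,L}$ combines with $f\vphi_B=c_{B',L}\vphi_B$ to yield exactly $c_{B,L}\vphi_B\HH^n_L$'' is slightly off---what Theorem \ref{teobola} literally produces is $W_2\bigl(c_{B',L}\vphi_B\HH^n_L,\,(c_{B',L}/c_{B,L})P\#(\vphi_B\mu)\bigr)$, and one then rescales both sides by $c_{B,L}/c_{B',L}\approx 1$ to obtain $W_2\bigl(c_{B,L}\vphi_B\HH^n_L,\,P\#(\vphi_B\mu)\bigr)$ before combining with the projection estimate.
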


\begin{proof}
We apply the preceding lemma with $\vphi_{B'}\mu$ instead of $\mu$ there, 
$L$ equal to an $n$-plane that minimizes $\alpha_2(B')$, and $f=\vphi_{B'}$. 
To prove the lemma
we may assume that $\alpha_2(B')\leq\delta$, with $\delta>0$ small enough. This implies that
$\beta_2(B')\leq \delta$. Then, using that $\mu(B)\approx r(B)^n$, it is easy to check that
if $\delta$ is
small enough, $L$ intersects $B$, and thus the assumptions in the preceding lemma 
are satisfied.
\end{proof}

\vvv



\section{The coefficients $\alpha_2$ on Lipschitz graphs}\label{sec6}

In this section we will prove the  particular case of Theorem \ref{teorectif} for Lipschitz graphs.
Given
a Lipschitz function $A:\R^n\to\R^{d-n}$, we set $\Gamma= \{(x,A(x)):\,x\in\R^n\}$. 
Then we take
$\mu = g\,\HH^n_{\Gamma}$, where $g:\Gamma\to(0,+\infty)$ satisfies $g(x)\approx 1$ for all $x\in\Gamma$.
It is clear that $\mu$ is $n$-dimensional AD-regular. We consider the following special ``v-cubes'' 
associated to $\mu$: we say that $Q\subset\R^d$ is a v-cube if it is of the form $Q=Q_0\times\R^{d-n}$, where
$Q_0\subset \R^n$ is an $n$-dimensional cube. We denote $\ell(Q):=\ell(Q_0)$.
We say that $Q$ is a dyadic v-cube if $Q_0$ is a dyadic
cube. The collection of dyadic v-cubes $Q$ with $\ell(Q)=2^{-j}$ is denoted by $\DD_{\rm v,j}$. Also,
we set $\DD_{\rm v}=\bigcup_{j\in\Z}\DD_{\rm v,j}$ and $\DD_{\rm v}^k=\bigcup_{j\geq k}\DD_{\rm v,j}$,
Given a v-cube $Q$, we let $\alpha_p(Q)=\alpha_p(B_Q)$,
where $B_Q$ is a smallest closed ball centered at some point from $Q\cap\Gamma$ that contains $Q\cap\Gamma$.

Now, for technical reasons, we need to introduce other scale invariant coefficient of ``$\alpha$ and $\beta$ type''.
Given $\Gamma$ and $\mu$ as above and a v-cube $Q\subset\R^d$, we
denote
$$\wt \alpha_2(Q) = \frac1{\ell(Q)^{1+\frac n2}}\,\inf_{L} \,W_2\bigl(\chi_{Q}\,\mu,\,
c_{Q,L}\chi_{Q}\HH^n_{L}\bigr),$$
where the infimum is taken over all $n$-dimensional planes that intersect $\interior Q$ and 
$c_{Q,L} =  \mu(Q)/\HH^n(L\cap Q)$. We also set
$$\wt\beta_2(Q) = \biggl(\inf_L \frac1{\ell(Q)^n}\int_{Q}\biggl(\frac{\dist(x,L)}{\ell(Q)}\biggr)^2\,
d\mu(x)\biggr)^{1/2},$$
where the infimum is taken over all $n$-planes in $\R^d$. Notice that the integral is over $Q$ 
instead of $2Q$.

\vvv
\enlargethispage{5mm}

\begin{lemma}\label{lemrestric2}
As above, let $\Gamma\subset\R^d$ be an $n$-dimensional Lipschitz graph and $\mu=\rho\HH^n_\Gamma$, with $
\rho(x)\approx1$ for all $x\in\Gamma$.
Let $B, Q\subset\R^d$ be a ball and a v-cube such that $3B\subset Q$ and  
$\frac12 B\cap\supp(\mu)\neq\varnothing$. Suppose
also that $r(B)\approx \ell(Q)$. 
Then,
$$\alpha_2(B)\lesssim \wt\alpha_2(Q).$$
\end{lemma}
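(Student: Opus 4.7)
\textbf{Proof plan for Lemma \ref{lemrestric2}.}
The plan is to adapt the proof of Lemma \ref{lemrestric01}, with the containing ball $B'$ there replaced by the v-cube $Q$. As in Lemma \ref{lemrestric1}, I first reduce to the case $\wt\alpha_2(Q)\leq\delta$ for a sufficiently small absolute $\delta>0$; otherwise the inequality $\alpha_2(B)\lesssim 1\lesssim \wt\alpha_2(Q)$ is trivial. Let $L$ be an $n$-plane (almost) attaining the infimum in the definition of $\wt\alpha_2(Q)$. An argument essentially identical to that of Lemma \ref{lembeta2} shows $\wt\beta_2(Q)\lesssim\wt\alpha_2(Q)\leq\delta$. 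Combined with the hypothesis that $\tfrac12 B$ meets $\supp(\mu)$ and with the AD-regularity of $\mu$, this forces $L$ to pass within distance $\lesssim\delta\,\ell(Q)$ of a point of $\tfrac12 B$, and in particular $L$ intersects $B$ once $\delta$ is small enough.

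I then import the projection $P$ constructed in Lemma \ref{lemrestric01}: $P$ coincides with the orthogonal projection $P^\bot$ onto $L$ on $\tfrac32 B$ and with the angular projection $P^a$ centered at $z_B$ on the complement. The two properties proved there---namely $\vphi_B\circ P=\vphi_B$ and $|Px-x|\lesssim\dist(x,L)$ for all $x$ in the support of $\chi_Q\mu$---hold verbatim, and yield
\[
W_2\bigl(\chi_Q\mu,\,P\#(\chi_Q\mu)\bigr)^2\leq\int_Q\dist(x,L)^2\,d\mu(x)\lesssim\wt\beta_2(Q)^2\,\ell(Q)^{n+2}\lesssim\wt\alpha_2(Q)^2\,\ell(Q)^{n+2},
\]
and the analogous estimate for $W_2\bigl(\vphi_B\mu,\,P\#(\vphi_B\mu)\bigr)$.

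Next I apply Theorem \ref{teobola} in the ambient plane $L\cong\R^n$ to the measures $\sigma:=c_{Q,L}\,\chi_{Q\cap L}\HH^n_L$ and $\tau:=P\#(\chi_Q\mu)$, both of total mass $\mu(Q)$, with bump function $\vphi_B|_L$. Since $L$ intersects $B$ and $\dist(z_B,L)\leq r(B)$, the ball $B_L\subset L$ containing $\supp(\vphi_B|_L)$ has radius $\approx r(B)$ and is contained in $Q\cap L$; hence $\sigma$ has density identically $c_{Q,L}\approx 1$ on $B_L$, and a direct computation (using that $\vphi_B$ is radial together with the formulas in \rf{eqfi00}) shows $\vphi_B|_L$ belongs to the $\GZ$-type class required by the theorem. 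Theorem \ref{teobola} then gives
\[
W_2(\vphi_B\sigma,\,a\,\vphi_B\tau)\lesssim W_2(\sigma,\tau),\qquad a=\frac{\int\vphi_B\,d\sigma}{\int\vphi_B\,d\tau}.
\]
Using $\vphi_B\sigma=c_{Q,L}\vphi_B\HH^n_L$ and $\vphi_B\tau=P\#(\vphi_B\mu)$ (the latter from $\vphi_B\circ P=\vphi_B$ together with $\supp(\vphi_B)\subset 3B\subset Q$), the relations $a\,c_{B,L}=c_{Q,L}$, $a\approx 1$, and the scaling $W_2(c\rho_1,c\rho_2)=\sqrt{c}\,W_2(\rho_1,\rho_2)$, I rescale and combine with the triangle-inequality bound
\[
W_2(\sigma,\tau)\leq W_2(\sigma,\chi_Q\mu)+W_2(\chi_Q\mu,\tau)\lesssim\wt\alpha_2(Q)\,\ell(Q)^{1+n/2}
\]
to conclude $W_2\bigl(c_{B,L}\vphi_B\HH^n_L,\,P\#(\vphi_B\mu)\bigr)\lesssim\wt\alpha_2(Q)\,\ell(Q)^{1+n/2}$. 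One final triangle inequality with the bound on $W_2(\vphi_B\mu,\,P\#(\vphi_B\mu))$ from the second paragraph gives $W_2(\vphi_B\mu,\,c_{B,L}\vphi_B\HH^n_L)\lesssim\wt\alpha_2(Q)\,\ell(Q)^{1+n/2}$, and dividing by $r(B)^{1+n/2}\approx\ell(Q)^{1+n/2}$ yields the claim. The main obstacle is verifying the density and bump-function hypotheses of Theorem \ref{teobola} inside $L$; both reduce to the smallness of $\wt\beta_2(Q)$ (placing $L$ close to $\Gamma$ near $B$) together with elementary radial geometry of $\vphi_B$.
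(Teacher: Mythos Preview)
Your proof is correct and follows essentially the same approach as the paper's: both adapt Lemma~\ref{lemrestric01} by replacing the smooth cutoff $\vphi_{B'}$ with $\chi_Q$, noting that the only property of $f$ used in that proof is $f\equiv 1$ on $\supp(\vphi_B)\subset 3B\subset Q$, so that $\vphi_B f=\vphi_B$ and Theorem~\ref{teobola} applies inside $L$ with the flat measure $c_{Q,L}\chi_{Q\cap L}\HH^n_L$ playing the role of the measure with bounded density. Your verification that $\vphi_B|_L$ satisfies the hypotheses of Theorem~\ref{teobola} on the ball $B_L=3B\cap L$ (radiality about $P^\bot(z_B)$, quadratic decay at $\partial B_L$) is a detail the paper leaves implicit, but it is indeed routine once $\dist(z_B,L)\leq r(B)$.
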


\begin{proof}
The arguments are the same as in Lemma \ref{lemrestric1}. Indeed, notice that in its proof the smoothness
of $\vphi_{B'}$ was not used. It was only necessary that $\vphi_{B'}(x)=1$ for $x\in\supp(\vphi_B)$, which also holds
with $\chi_{Q}$ instead of $\vphi_{B'}$. 
\end{proof}

We identify $\R^n$ with the subspace of $\R^d$ formed by those points whose last $d-n$ coordinates are zero.
Let $P_{\R^n}$ and $P_\Gamma$ projections from $\R^d$ onto $\R^n$ and $\Gamma$, respectively, both orthogonal to $\R^n$. Observe that ${P_{\R^n}}_{|\Gamma}:\Gamma\to\R^n$
is a bilipschitz mapping.
We denote $\sigma_\Gamma= P_\Gamma\#\LL^n_{\R^n}$, where $\LL^n_{\R^n}$ stands for the Lebesgue
measure on $\R^n$. Clearly, $\HH^n_\Gamma$  and $\sigma_\Gamma$ are comparable. However, for the arguments below $\sigma_\Gamma$ will be more convenient that $\HH^n_\Gamma$.

\begin{lemma}\label{lem53}
As above, let $\Gamma\subset\R^d$ be an $n$-dimensional Lipschitz graph and $\mu=g\sigma_\Gamma$, with $g(x)
\lesssim1$ for all $x\in\Gamma$. Consider a v-cube $Q\subset\R^d$ and a tree $\TT$ 
with root $Q$. Denote by $\SSS(\TT)\subset Q$ its family of stopping v-cubes and suppose that $\mu(P)\approx\ell(P)^n$ for every $P\in\TT$. Then we
 have
$$\wt\alpha_2(Q)^2\lesssim \wt\beta_2(Q)^2 + \sum_{P\in\TT}\|\Delta_P^{\sigma_\Gamma} g\|_{L^2(\sigma_
\Gamma)}^2
\frac{\ell(P)}{\ell(Q)^{n+1}} + \sum_{P\in \SSS(\TT)} \frac{\ell(P)^2}{\ell(Q)^{n+2}}\,\mu(P).$$
\end{lemma}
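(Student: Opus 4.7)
My strategy is to combine two contributions via the triangle inequality: the geometric deviation of $\Gamma$ from a best-fit plane $L$, producing the $\wt\beta_2(Q)^2$ term, and the density variation of $g$ along the tree $\TT$, producing the last two sums via the Key Lemma. Since $\wt\alpha_2(Q)\lesssim1$ trivially from the size of $\mu(Q)$ and $\diam(Q)$, I may assume $\wt\beta_2(Q)$ is small; then an $n$-plane $L$ essentially minimizing $\wt\beta_2(Q)$ meets $\interior Q$ and satisfies $\int_Q\dist(x,L)^2\,d\mu\lesssim\wt\beta_2(Q)^2\,\ell(Q)^{n+2}$. Setting $\bar g=\mu(Q)/\sigma_\Gamma(Q)\approx1$ (since $g\approx1$), I introduce the intermediate ``flat-on-$\Gamma$'' measure $\nu_\Gamma=\bar g\,\chi_Q\,\sigma_\Gamma$, which has total mass $\mu(Q)=c_{Q,L}\HH^n(L\cap Q)$, and split
\begin{equation*}
W_2(\chi_Q\mu,\,c_{Q,L}\chi_Q\HH^n_L)\leq W_2(\chi_Q\mu,\,\nu_\Gamma)+W_2(\nu_\Gamma,\,c_{Q,L}\chi_Q\HH^n_L).
\end{equation*}

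\noindent\textbf{Density step.} Both $\chi_Q\mu$ and $\nu_\Gamma$ live on $\Gamma$, so I transfer the problem to $\R^n$ through the measure-preserving bilipschitz map $P_\Gamma\colon(\R^n,\LL^n)\to(\Gamma,\sigma_\Gamma)$. Writing $Q_0=P_{\R^n}(Q)\in\DD(\R^n)$ and $\wt g=g\circ P_\Gamma$, the measures $\chi_Q\mu$ and $\nu_\Gamma$ pull back to $\wt g\,\chi_{Q_0}\LL^n$ and $\bar g\,\chi_{Q_0}\LL^n$ respectively, and the v-cubes in $\TT$ correspond canonically to a tree of dyadic cubes in $\DD(Q_0)$ with stopping set $\{P_{\R^n}(P):P\in\SSS(\TT)\}$. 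Since $\bar g\approx1$ and the hypothesis $\mu(P)\approx\ell(P)^n$ translates into the $\tau(P_0)\approx\sigma(P_0)$ required by Remark \ref{remkey}, applying that remark with $\alpha=1$ gives
\begin{equation*}
W_2(\chi_Q\mu,\,\nu_\Gamma)^2\lesssim\sum_{P\in\TT}\|\Delta_P^{\sigma_\Gamma}g\|_{L^2(\sigma_\Gamma)}^2\,\ell(P)\,\ell(Q)+\sum_{P\in\SSS(\TT)}\ell(P)^2\,\mu(P),
\end{equation*}
where I use that $\Delta_{P_0}(\wt g\,\chi_{Q_0}\LL^n)=\Delta_{P_0}\wt g$ on $P_0$ corresponds to $\Delta_P^{\sigma_\Gamma}g$ on $P$, that $\int_{P_0}\wt g\,d\LL^n=\mu(P)$, and that $P_\Gamma$ is measure-preserving so $L^2(\sigma_\Gamma)=L^2(\LL^n\circ P_{\R^n})$; the bilipschitz distortion of $W_2$ by $P_\Gamma$ costs only a constant depending on $\|\nabla A\|_\infty$. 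Divided by $\ell(Q)^{n+2}$, these are exactly the two sums appearing in the conclusion.

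\noindent\textbf{Flattening step.} For $W_2(\nu_\Gamma,\,c_{Q,L}\chi_Q\HH^n_L)$ I insert the orthogonal projection $\Pi_L\colon\R^d\to L$:
\begin{equation*}
W_2(\nu_\Gamma,\,c_{Q,L}\chi_Q\HH^n_L)\leq W_2(\nu_\Gamma,\,\Pi_L\#\nu_\Gamma)+W_2(\Pi_L\#\nu_\Gamma,\,c_{Q,L}\chi_Q\HH^n_L).
\end{equation*}
The first piece is controlled by the projection cost: $W_2(\nu_\Gamma,\,\Pi_L\#\nu_\Gamma)^2\leq\bar g\int_Q\dist(x,L)^2\,d\sigma_\Gamma\lesssim\wt\beta_2(Q)^2\,\ell(Q)^{n+2}$, using $g\approx1$ and the defining property of $L$. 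For the residual piece both measures sit on $L$ with common mass $\mu(Q)$; since $\Gamma$ is a Lipschitz graph and $\wt\beta_2(Q)$ is small, $\Pi_L|_{\Gamma\cap Q}$ is a bilipschitz bijection onto a subset of $L$ whose symmetric difference with $L\cap Q$ and whose Jacobian deviation from a constant are both controlled by $\wt\beta_2(Q)$, and an explicit transport plan then yields a bound of the same order $\wt\beta_2(Q)\,\ell(Q)^{1+n/2}$. Dividing the resulting $W_2^2$ inequality by $\ell(Q)^{n+2}$ gives the claim.

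\noindent\textbf{Main obstacle.} The only step that is not a routine triangle-inequality reduction or a direct appeal to Remark \ref{remkey} is the residual estimate $W_2(\Pi_L\#\nu_\Gamma,\,c_{Q,L}\chi_Q\HH^n_L)\lesssim\wt\beta_2(Q)\,\ell(Q)^{1+n/2}$; it requires simultaneously controlling the tangential angle between $\Gamma$ and $L$ (which governs the Jacobian of $\Pi_L|_\Gamma$) and the support discrepancy $\Pi_L(\Gamma\cap Q)\triangle(L\cap Q)$, purely in terms of $\wt\beta_2(Q)$ together with the Lipschitz norm of the defining function of $\Gamma$.
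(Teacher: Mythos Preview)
Your density step is correct and coincides with the paper's use of Remark~\ref{remkey} after pulling back to $\R^n$. The gap is exactly where you flag it: the residual bound in the flattening step.

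Your proposed justification is wrong. The Jacobian of $\Pi_L|_{\Gamma\cap Q}$ is \emph{not} controlled by $\wt\beta_2(Q)$, in any norm. Take $n=1$, $d=2$, $Q=(0,1]\times\R$, and $A(x)=x+\epsilon\sin(Nx)$ with $N\epsilon\approx 1$. The best-fit line $L$ has slope $\approx 1$ and $\wt\beta_2(Q)\approx\epsilon$, yet the angle between the tangent of $\Gamma$ and $L$ is $\approx N\epsilon\approx 1$ on a set of full measure, so the Jacobian of $\Pi_L|_\Gamma$ oscillates by order $1$, not $\epsilon$. The tangential angle is a derivative quantity that the $L^2$ coefficient $\wt\beta_2$ simply does not see.

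The fix---and this is what the paper does---is to replace the orthogonal projection $\Pi_L$ by the projection $P_L$ onto $L$ \emph{along the vertical direction} $\R^{d-n}$. The map $P_L\circ P_\Gamma:\R^n\to L$ is affine, so $P_L\#(\chi_Q\sigma_\Gamma)$ is a constant multiple of $\chi_{L\cap Q}\HH^n_L$; matching masses gives $P_L\#\nu_\Gamma=c_{Q,L}\chi_Q\HH^n_L$ \emph{exactly}, and your residual term vanishes. Since the $\wt\beta_2$-minimizing $L$ approximates a Lipschitz graph over $\R^n$, it has bounded slope, so one still has $|x-P_Lx|\lesssim\dist(x,L)$ for the projection-cost estimate.

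The paper in fact reverses your order: it first flattens $\chi_Q\mu$ itself via $P_L$ and then runs the density comparison on $L$. This has the additional advantage that the projection cost is $\int_Q|x-P_Lx|^2\,d\mu\lesssim\wt\beta_2(Q)^2\ell(Q)^{n+2}$ directly, without the pointwise lower bound $g\gtrsim 1$ that your flattening of $\nu_\Gamma=\bar g\,\sigma_\Gamma$ requires (and which is not among the hypotheses---the lemma only assumes $g\lesssim 1$).
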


\begin{proof} Let $L$ be the $n$ plane that minimizes $\wt\alpha_2(Q)$, and assume that it is not orthogonal to 
$\R^n$ (otherwise, just rotate it slightly). Let $P_L$ be the projection from $\R^d$ onto $L$ which is
orthogonal to $\R^n$, and $P_{\R^n},P_\Gamma$ as above. Consider the flat measure $\sigma_L = P_L\#\sigma_\Gamma=P_L\#\LL^n_{\R^n}$.
We have,
\begin{equation}\label{eqd871}
W_2(\chi_Q\mu,\,a\,\chi_Q\sigma_L)\leq 
W_2(\chi_Q\mu,\,\chi_Q\,P_L\#\mu) + W_2(\chi_Q\,P_L\#\mu,\,a\,\chi_Q\,\sigma_L),
\end{equation}
where
$$a= \frac{\mu(Q)}{\sigma_L(Q)} = \frac{\mu(Q)}{\LL^n_{\R^n}(Q)} = m_Qg_0,\qquad g_0=g\circ P_\Gamma.$$
The first summand on the right side of \rf{eqd871} is easily estimated in terms of $\wt \beta_2(Q)$:
$$W_2(\chi_Q\mu,\,\chi_Q\,P_L\#\mu)^2 \leq \int_Q|x-P_Lx|^2\,d\mu(x)\lesssim \wt\beta_2(Q)^2\,\ell(Q)^{n+2}.$$

Concerning the last term in \rf{eqd871}, using that ${P_{\R^n}}_{|L\cap Q}:L\cap Q\to\R^{n}\cap Q$ is bilipschitz, we have
\begin{align*}
W_2(\chi_Q\,P_L\#\mu,\,a\,\chi_Q\,\sigma_L)&\approx W_2(\chi_Q\,P_{\R^n}\#(P_L\#\mu),\,a\,\chi_Q\,P_{\R^n}\#\sigma_L)
\\ &=  W_2(\chi_Q\,g_0\LL^n_{\R^n},\,a\,\chi_Q\,\LL^n_{\R^n}).
\end{align*}
Now recall that, by \rf{eqkey777} in Remark \ref{remkey}, we have
$$W_2(\chi_Q\,g_0\LL^n_{\R^n},\,a\,\chi_Q\,\LL^n_{\R^n})^2 \lesssim 
\sum_{P\in \TT_{\R^n}}\|\Delta_P g_0\|_{L^2(\R^n)}^2\,\ell(P)\ell(Q)+\sum_{P\in \SSS(\TT)} \ell(P)^2\,\mu(P)
,$$
where $\TT_{\R^n}\subset\DD(\R^n)$ is the tree formed by the cubes $P_{\R^n}(\wh P)$, $\wh P\in\TT$.
From \rf{eqd871} and the preceding estimates, we infer that
\begin{multline*}
W_2(\chi_Q\mu,\,a\,\chi_Q\sigma_L)^2\\
\lesssim \wt\beta_2(Q)^2\,\ell(Q)^{n+2} +
\sum_{P\in \TT_{\R^n}}\|\Delta_P g_0\|_{L^2(\R^n)}^2\,\ell(P)\ell(Q)+\sum_{P\in \SSS(\TT)} \ell(P)^2\,\mu(P).
\end{multline*}
To conclude the proof of the lemma, just notice that, 
from the definition of $g,g_0$ and $\sigma_\Gamma$, it follows that
for each cube $P\subset \R^n$ and the corresponding v-cube $\wh P=P\times\R^{d-n}$,
$$\|\Delta_P g_0\|_{L^2(\R^n)} = \|\Delta_{\wh P}^{\sigma_\Gamma} g\|_{L^2(\sigma_\Gamma)}.$$
\end{proof}

\begin{lemma}\label{lemmarectif02} 
As above, let $\Gamma\subset\R^d$ be an $n$-dimensional Lipschitz graph and $\mu=g\sigma_\Gamma$, with $g(x)
\lesssim1$ for all $x\in\Gamma$. Consider a tree $\TT$ of v-cubes  such that
every $Q\in\TT$ satisfies the following property:  
\begin{align}\label{cond5}
&\text{If $P$ is a v-cube (non necessarily dyadic) such that $P\cap Q\neq\varnothing$}\\
&\text{and $\ell(P)=\ell(Q)$,
then 
$\mu(P)\approx\ell(P)^n$.}\nonumber
\end{align}
Then we have 
\begin{equation}\label{eqsu732}
\sum_{Q\in\TT:Q\subset R} \alpha_2(Q)^2\mu(Q)\lesssim \mu(R)\qquad\mbox{for all $R\in\TT$.}
\end{equation}
\end{lemma}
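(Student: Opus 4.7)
My strategy is to combine Lemmas \ref{lemrestric2} and \ref{lem53} to decompose $\alpha_2(Q)^2$ into three controllable pieces, and then to sum over $Q \in \TT$ with $Q \subset R$. Fix $R \in \TT$, and for each $Q \in \TT$ with $Q \subset R$ let $\hat Q$ be the dyadic v-ancestor of $Q$ with $\ell(\hat Q) = 2^k \ell(Q)$, where $k$ is a fixed constant chosen large enough that $3 B_Q \subset \hat Q$ and $r(B_Q) \approx \ell(\hat Q)$ (for the finitely many $Q$ too close to the top, take $\hat Q = R$). Since the tree $\TT$ is closed under dyadic ancestors, $\hat Q \in \TT$ and $\mu(\hat Q) \approx \ell(\hat Q)^n$, so Lemma \ref{lemrestric2} yields $\alpha_2(Q)^2 \lesssim \wt\alpha_2(\hat Q)^2$. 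Applying Lemma \ref{lem53} to $\hat Q$ with the auxiliary tree $\TT_Q^{\rm aux} := \TT \cap \DD_{\rm v}(\hat Q)$, whose stopping family is $\SSS(\TT) \cap \DD_{\rm v}(\hat Q)$ and whose elements all satisfy $\mu \approx \ell^n$ by the standing hypothesis, I obtain
$$\alpha_2(Q)^2 \lesssim \wt\beta_2(\hat Q)^2 + \sum_{P \in \TT_Q^{\rm aux}} \|\Delta_P^{\sigma_\Gamma} g\|_{L^2(\sigma_\Gamma)}^2 \frac{\ell(P)}{\ell(\hat Q)^{n+1}} + \sum_{P \in \SSS(\TT) \cap \DD_{\rm v}(\hat Q)} \frac{\ell(P)^2\,\mu(P)}{\ell(\hat Q)^{n+2}}.$$

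Multiplying this inequality by $\mu(Q) \approx \ell(\hat Q)^n$ and summing over $Q \in \TT$ with $Q \subset R$ splits the total into three sums $I + I\!I + I\!I\!I$. For $I = \sum_Q \wt\beta_2(\hat Q)^2 \mu(Q)$, the map $Q \mapsto \hat Q$ has fibers of size at most $2^{kn}$ and $\sum_{Q:\,\hat Q\text{ fixed}} \mu(Q) \lesssim \mu(\hat Q)$, so $I \lesssim \sum_{\hat Q \in \TT,\, \hat Q \subset R} \wt\beta_2(\hat Q)^2 \mu(\hat Q)$, which is controlled by $\mu(R)$ via the classical David--Semmes Carleson-measure estimate for $\beta_2$ (equivalently $\wt\beta_2$) on $n$-dimensional Lipschitz graphs. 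For $I\!I$, I swap the order of summation: for each $P$, the inner sum $\sum_{Q:\,P\subset\hat Q}\mu(Q)\,\ell(P)/\ell(\hat Q)^{n+1}$ reduces, via $\mu(Q)\approx 2^{-kn}\ell(\hat Q)^n$ and a geometric series in $\ell(\hat Q)$, to a quantity bounded by a constant, so $I\!I \lesssim \sum_{P \in \TT\cap\DD_{\rm v}(R)} \|\Delta_P^{\sigma_\Gamma} g\|_{L^2(\sigma_\Gamma)}^2$. Identifying $g$ with $g_0 = g \circ P_\Gamma$ on $\R^n$ (under which the Haar coefficients transfer isometrically), orthogonality of Haar wavelets together with $\|g_0\|_\infty \lesssim 1$ yield $I\!I \lesssim |R_0| \approx \mu(R)$.

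For $I\!I\!I$, an analogous order swap shows that for each $P \in \SSS(\TT)$ with $P \subset R$, the inner sum $\sum_{Q:\,P \subset \hat Q} \mu(Q)/\ell(\hat Q)^{n+2}$ is $\lesssim 1/\ell(P)^2$ by the same geometric-series argument, leaving $I\!I\!I \lesssim \sum_{P \in \SSS(\TT),\, P \subset R} \mu(P) \leq \mu(R)$, where the last inequality uses that stopping cubes of $\TT$ are pairwise disjoint and contained in $R$. Summing the three estimates gives the desired Carleson bound. The main obstacle I anticipate is the proper invocation of the classical David--Semmes Carleson estimate for $\wt\beta_2$ on Lipschitz graphs in the form needed for v-cubes (rather than balls); the remaining steps are geometric-series manipulations and standard $L^2$ orthogonality whose bookkeeping is elementary once the choice of $\hat Q$ and $\TT_Q^{\rm aux}$ is fixed.
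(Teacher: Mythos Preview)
Your argument has a genuine gap at the very first step. You write ``let $\hat Q$ be the dyadic v-ancestor of $Q$ with $\ell(\hat Q)=2^k\ell(Q)$, where $k$ is a fixed constant chosen large enough that $3B_Q\subset\hat Q$.'' No such $k$ exists in general: whenever the projection $Q_0\subset\R^n$ of $Q$ touches a face of the projection $\hat Q_0$ of its dyadic ancestor $\hat Q$ (and this happens for a positive proportion of the $2^{kn}$ descendants of $\hat Q$, for every $k$), the ball $3B_Q$ has radius $\gtrsim\ell(Q)$ while its center projects to a point within distance $\ell(Q)$ of $\partial\hat Q_0$, so $3B_Q$ necessarily spills out of the slab $\hat Q=\hat Q_0\times\R^{d-n}$. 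Hence Lemma~\ref{lemrestric2} cannot be invoked for these $Q$, and since they form a fixed fraction of all cubes at every scale, you cannot simply absorb them.

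This boundary obstruction is exactly what the paper's proof circumvents via the shifted lattices $\tfrac{2^{-k}}{3}e+\DD_{\R^n,j}$, $e\in\{0,1\}^n$ (the Okikiolu one-third trick): for each $Q$ there is \emph{some} shift $e$ and some cube $\wt Q\in\TT_e$ of side $2^{n_0}\ell(Q)$ with $3B_Q\subset\wt Q$. The hypothesis \eqref{cond5}, which you did not use, is stated precisely so that the cubes in the shifted families $\TT_e$ inherit the mass condition $\mu\approx\ell^n$ needed to apply Lemma~\ref{lem53} there. Once the correct containing cube is produced, the remainder of your outline---bounding the $\wt\beta_2$ sum by the David--Semmes Carleson estimate on Lipschitz graphs, and the other two sums by swapping the order of summation and using a geometric series plus $L^2$ orthogonality of the $\Delta_P^{\sigma_\Gamma}g$---is essentially the paper's argument and goes through without change.
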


\begin{proof}
We will use a well known trick which goes back to Okikiolu
\cite{Okikiolu}, as far as we know. Given a fixed $k\in\Z$ and $j\geq k$,
 for $e\in\{0,1\}^n$ consider the translated grid 
$\frac{2^{-k}}3\,e + \DD_{\R^n,j}$ and denote 
$$\wt\DD_{\R^n,j} = \bigcup_{e\in\{0,1\}^n} \biggl(\frac{2^{-k}}3\,e + \DD_{\R^n,j}\biggr),
\qquad \wt\DD^k_{\R^n} = \bigcup_{j\geq k} \wt\DD_{\R^n,j}.
$$
It turns out that for every $j\geq k$ and $x\in\R^n$,
\begin{equation} \label{eqex3}
\exists \;Q\in\wt\DD_{\R^n,j}\quad\text{such that 
$x\in\frac23 Q$.}
\end{equation}
 See \cite{Lerman} for a further generalization and a very transparent proof of this fact.

 To prove \rf{eqsu732}, fix a cube $R\in\TT$.
Since the v-cubes $Q\in\TT$ which are contained in $R$ form another tree whose root is $R$, without loss of generality,
 we may assume that $R$ is the root of $\TT$. Let $k\in\Z$ be such 
that $R\in \DD_{{\rm v},k}$, that is, $\ell(R)=2^{-k}$.  
Consider the ``extended lattice'' $\wt\DD^k_{\rm v}$ associated to $\wt\DD^k_{\R^n}$. If $n_0$ is chosen
big enough (depending on $d$, $n$, $\|\nabla A\|_\infty$, and various absolute constants), from \rf{eqex3} we infer that for any $Q\in\DD_{\rm v}(R)$ with $\ell(Q)\leq 2^{-n_0}\ell(R)$ there exists another v-cube
$\wt Q\in\wt\DD^k_{\rm v}$ such that $3B_Q\subset\wt Q$ and $\ell(\wt Q)=2^{n_0}\ell(Q)$ (recall that
 $B_Q$ is the smallest closed ball that contains $Q\cap\Gamma$).

For $e\in\{0,1\}^n$, denote by $\TT_e$ the collection of v-cubes  $P\in\frac{2^{-k}}3\,e + \DD_{\rm v}^k$
for which there exists some $Q\in\TT$ such that $Q\cap P\neq\varnothing$ and $\ell(Q)=\ell(P)$. It is immediate to check that although, in general, $\TT_e$ is not a tree, it is made of a
finite collection of trees whose roots are the v-cubes from $\frac{2^{-k}}3\,e + \DD_{{\rm v},k}$ that intersect
$R$.

Observe that every $Q\in\TT_e$ satisfies $\mu(Q)\approx\ell(Q)^n$, by the condition \rf{cond5}. Then, 
by Lemma \ref{lem53}, we obtain
\begin{align}\label{eqs51}
\sum_{Q\in\TT_e} \wt\alpha_2(Q)^2\mu(Q)& \lesssim
\sum_{Q\in\TT_e}\wt\beta_2(Q)^2\,\mu(Q) + \sum_{Q\in\TT_e}\sum_{P\in\TT_e:P\subset Q}\|\Delta_P^{\sigma_\Gamma} g\|_{L^2(\sigma_
\Gamma)}^2
\frac{\ell(P)}{\ell(Q)}\\
&\quad + \sum_{Q\in\TT_e}\sum_{P\in \SSS(\TT_e):P\subset Q} \frac{\ell(P)^2}{\ell(Q)^{2}}\,\mu(P).\nonumber
\end{align}
The first sum on the right side is bounded by $c\mu(R)$, since $\wt\beta_2(Q)\leq \beta_2(Q)$ and the $\beta_2(Q)$ coefficients satisfy a Carleson packing condition on Lipschitz graphs (see \cite{DS1}).
Concerning the second one, interchanging the sums, it equals
$$
\sum_{P\in\TT_e} \|\Delta_P^{\sigma_\Gamma} g\|_{L^2(\sigma_\Gamma)}^2
\sum_{Q\in\TT_e:Q\supset P}\frac{\ell(P)}{\ell(Q)}\lesssim 
\sum_{P\in\TT_e} \|\Delta_P^{\sigma_\Gamma} g\|_{L^2(\sigma_\Gamma)}^2\lesssim \mu(R),$$
since $g$ is a bounded function.
Finally, interchanging the order of summation again, the last term in \rf{eqs51} equals
$$\sum_{P\in\SSS(\TT_e)}\mu(P)\sum_{Q\in\TT_e :Q\supset P} \frac{\ell(P)^2}{\ell(Q)^{2}}
\lesssim\sum_{P\in\SSS(\TT_e)}\mu(P)\lesssim\mu(R).$$
Therefore,
$\sum_{Q\in\TT_e} \wt\alpha_2(Q)^2\mu(Q)\lesssim\mu(R).$

By the discussion just below \rf{eqex3}, for every $Q\in\TT$ with $\ell(Q)\leq 2^{-n_0}\ell(R)$,
there exists some $e\in\{0,1\}^n$ and some $Q'\in\TT_e$ such that $3B_Q\subset Q'$ and thus
$\alpha_2(Q)\lesssim\wt\alpha_2(Q')$, by Lemma \ref{lemrestric2}. Then we get
$$\sum_{Q\in\TT:\ell(Q)\leq 2^{-n_0}\ell(R)} \alpha_2(Q)^2\mu(Q) \lesssim
\sum_{Q\in\TT_e} \wt\alpha_2(Q)^2\mu(Q)\lesssim\mu(R).$$
On the other hand, since there is a bounded number of v-cubes $Q\in\TT$ such that
$\ell(Q)> 2^{-n_0}\ell(R)$, also
$$\sum_{Q\in\TT:\ell(Q)> 2^{-n_0}\ell(R)} \alpha_2(Q)^2\mu(Q) 
\lesssim\mu(R).$$
\end{proof}

A direct consequence of the preceding lemma is the following.

\begin{theorem}\label{teorectif02} 
Let $\Gamma\subset\R^d$ be an $n$-dimensional Lipschitz graph in $\R^d$ and $\mu=g\sigma_\Gamma$,  if $g:\Gamma\to(0,+\infty)$ satisfies $g(x)\approx 1$ for all $x\in\Gamma$, then
 $\alpha_2(x,t)^2\,d\mu(x)\,\dfrac{dr}r$ is a Carleson measure, that is, if for any ball $B$ with radius $R$,
$$\int_0^R\!\!\!\int_{B}
 \alpha_2(x,t)^2\,d\mu(x)\,\frac{dr}r \leq c\,R^n.$$
\end{theorem}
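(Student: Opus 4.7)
The plan is to discretize the integral and reduce it to the discrete Carleson packing estimate underlying Lemma \ref{lemmarectif02}. Fix a ball $B$ of radius $R$ centred on $\supp(\mu)$. Choose $k \in \Z$ with $2^{-k} \approx R$ and let $R_0 \in \DD_{{\rm v},k}$ be a v-cube whose horizontal projection contains, after enlarging by a bounded factor, the horizontal projection of $B$. Because $\mu = g\sigma_\Gamma$ with $g \approx 1$, every v-cube $P$ satisfies $\mu(P) \approx \ell(P)^n$, so condition \rf{cond5} holds automatically for any tree of v-cubes we might form in the sequel.

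The first step is to replace the continuous pair $(x,r)$ by a dyadic v-cube of comparable size. Introduce the Okikiolu-type shifted lattices $\frac{2^{-k}}{3} e + \DD_{\rm v}^{k}$ ($e \in \{0,1\}^n$) already used in the proof of Lemma \ref{lemmarectif02}. For each $x \in B \cap \supp(\mu)$ and $r \in (0,R]$, property \rf{eqex3} guarantees some $e$ and some v-cube $Q = Q_{x,r,e}$ in the $e$-shifted lattice with $3B(x,r) \subset Q$, $\ell(Q) \approx r$, and $Q$ contained in one of a bounded number of enlarged v-cubes $R_0^{(e)}$. Lemma \ref{lemrestric2} then yields $\alpha_2(x,r) \lesssim \wt\alpha_2(Q)$.

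The second step is Fubini. For each $Q$ the set $\{(x,r) : Q_{x,r,e} = Q\}$ has $x \in Q$ and $r \asymp \ell(Q)$, so its $d\mu \times dr/r$ measure is $\lesssim \mu(Q)$. Consequently
\begin{equation*}
\int_0^R \!\!\! \int_B \alpha_2(x,r)^2 \, d\mu(x) \, \frac{dr}{r} \,\lesssim\, \sum_{e \in \{0,1\}^n} \sum_{Q \in \TT_e} \wt\alpha_2(Q)^2 \, \mu(Q),
\end{equation*}
where $\TT_e$ is the tree of descendants of $R_0^{(e)}$ in the $e$-shifted lattice. The inner sum is controlled exactly by the argument inside the proof of Lemma \ref{lemmarectif02}: Lemma \ref{lem53}, the Carleson packing of the $\beta_2$'s on Lipschitz graphs from \cite{DS1}, and the $L^2$ bound on the Haar coefficients of the bounded density $g$ combine to give $\sum_{Q \in \TT_e} \wt\alpha_2(Q)^2 \mu(Q) \lesssim \mu(R_0^{(e)}) \lesssim R^n$. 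Summing over the $2^n$ shifts yields the desired Carleson estimate.

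The main obstacle is the discretization in the first step: one has to verify that the shifted-lattice device allows every continuous pair $(x,r)$ to be dominated by a single dyadic v-cube $Q$ in some shifted lattice satisfying $3B(x,r) \subset Q$ and $\ell(Q) \approx r$, so that Lemma \ref{lemrestric2} applies and yields $\alpha_2(x,r) \lesssim \wt\alpha_2(Q)$. Once this geometric step is in place, the rest is Fubini and the already established discrete packing estimate.
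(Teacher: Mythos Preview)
Your proposal is correct and follows essentially the same route as the paper. The paper's own proof is the single line ``It follows by standard methods from the previous lemma, taking into account Lemma~\ref{lemrestric1}'', and you have spelled out those standard methods: discretize, use the localization lemma to pass from $\alpha_2(x,r)$ to a coefficient attached to a dyadic v-cube, and invoke the packing estimate already established for the Lipschitz graph.

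The only cosmetic difference is that the paper points to Lemma~\ref{lemrestric1} and the \emph{conclusion} of Lemma~\ref{lemmarectif02} (the packing of the $\alpha_2(Q)$'s), whereas you use Lemma~\ref{lemrestric2} to pass directly to $\wt\alpha_2(Q)$ and then quote the packing of the $\wt\alpha_2$'s that appears as the intermediate step \emph{inside} the proof of Lemma~\ref{lemmarectif02}. Since that intermediate packing is exactly what Lemma~\ref{lem53} plus the $\beta_2$ Carleson condition plus the Haar bound on $g$ give, and since the shifted-lattice device you invoke is the same one used there, the two packagings are equivalent. Your route via Lemma~\ref{lemrestric2} is arguably the cleaner one, because it sidesteps the minor nuisance that for a given $(x,r)$ the smallest ball $B_Q$ need not contain $3B(x,r)$ when $x$ sits near $\partial B_Q$.
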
	

\begin{proof}
It follows by standard methods from the previous lemma,
taking into account  Lemma \ref{lemrestric1}.
\end{proof}

Notice that the assumptions in the Lemma \ref{lemmarectif02} were somewhat more general than the ones needed for the last theorem. The broader generality of Lemma \ref{lemmarectif02} will be needed below, to deal
with the case of uniformly rectifiable sets.

\vvv


\section{The coefficients $\alpha_2$ on uniformly rectifiable sets}\label{sec7}

Throughout all this section we will assume that $\mu$ is an $n$-dimensional AD-regular measure on $\R^d$.
As in the preceding section,
the coefficients $\alpha_p$ and $\beta_p$ are defined with respect to $\mu$. If they are taken with respect to a different measure $\sigma$, they are denoted by $\alpha_{p,\sigma}$ or $\beta_{p,\sigma}.$

\subsection{$\mu$-cubes and the corona decomposition}

For the study of the uniformly rectifiable sets we will use the ``dyadic cubes'' built by David in \cite[Appendix 1]{David-wavelets} (see also~\cite{Christ} 
for an alternative construction). 
These dyadic cubes are not true cubes, but they play this role with respect to $\mu$, in a sense. To
distinguish them from the usual cubes, we will call them ``$\mu$-cubes''. 

Let us explain 
which are the precise results and properties about the lattice of dyadic $\mu$-cubes. 
For each $j\in\Z$, there exists a family $\DD_j^\mu$ of Borel subsets of $\supp(\mu)$ (the dyadic $\mu$-cubes of the $j$-th
 generation) such that:
\begin{itemize}
\item[(i)] each $\DD_j^\mu$ is a partition of $\supp(\mu)$, i.e.\ $\supp(\mu)=\bigcup_{Q\in \DD_j^\mu} Q$ and $Q\cap Q'=\varnothing$ whenever $Q,Q'\in\DD_j^\mu$ and
$Q\neq Q'$;
\item[(ii)] if $Q\in\DD_j^\mu$ and $Q'\in\DD_k^\mu$ with $k\leq j$, then either $Q\subset Q'$ or $Q\cap Q'=\varnothing$;
\item[(iii)] for all $j\in\Z$ and $Q\in\DD_j^\mu$, we have $2^{-j}\lesssim\diam(Q)\leq2^{-j}$ and $\mu(Q)\approx 2^{-jn}$;
\item[(iv)] if $Q\in\DD_j^\mu$, there is a point $z_Q\in Q$ (the center of $Q$) such that $\dist(z_Q,\supp(\mu)\setminus Q)
\gtrsim 2^{-j}$.
\end{itemize}
We denote $\DD^\mu=\bigcup_{j\in\Z}\DD_j^\mu$. Given $Q\in\DD_j^\mu$, the unique $\mu$-cube $Q'\in\DD_{j-1}^\mu$ which contains $Q$ is called the parent of $Q$.
We say that $Q$ is a sibling or son of $Q'$. 

For $Q\in \DD_j^\mu$, we define the side length
 of $Q$ as $\ell(Q)=2^{-j}$. Notice that $\ell(Q)\lesssim\diam(Q)\leq \ell(Q)$.
Actually it may happen that a cube $Q$ belongs to $\DD_ j^\mu\cap \DD_k^\mu$ with $j\neq k$, because there may exist cubes
with only one sibling. In this case, $\ell(Q)$ is not well defined. However this problem can be solved in many ways.
For example, the reader may think that a cube is not only a subset of $\supp(\mu)$, but a couple $(Q,j)$, where $Q$ is
a subset of $\supp(\mu)$ and $j\in\Z$ is such that $Q\in\DD_j^\mu$.

Given $\lambda>1$, we set
$$\lambda Q:= \bigl\{x\in \supp(\mu): \dist(x,Q)\leq (\lambda-1)\ell(Q)\bigr\}.$$
Observe that $\diam(\lambda Q)\leq \diam(Q) + 2(\lambda-1)\ell(Q)\leq (2\lambda-1)\ell(Q)$.
For $R\in\DD^\mu$, we denote $\DD^\mu(R) = \{Q\in\DD^\mu:Q\subset R\}.$

Given a $\mu$-cube $Q$ (or an arbitrary subset of $\supp(\mu)$), we denote by $B_Q$ a smallest ball centered 
at some point from $Q$ which contains $Q$. Then we define
$\alpha_p(Q):=\alpha_p(B_Q)$ and $\beta_p(Q):=\beta_p(B_Q)$. If $B_Q$ is not unique it does not matter
which one we choose. The ``bilateral $\beta$ coefficient'' of $Q$ is:
$$b\beta_\infty(Q) = \inf_L \left[\sup_{x\in\supp(\mu)\cap 2B_Q}\,\frac{\dist(x,L)}{\ell(Q)} +
\sup_{x\in L\cap 2B_Q}\,\frac{\dist(x,\supp(\mu))}{\ell(Q)}\right].$$


Now we wish to recall the notion of corona decomposition from David and Semmes \cite{DS1}, \cite{DS2} (adapted 
to our specific situation). It involves
the notion of a tree of dyadic $\mu$-cubes, which is analogous to the one of a tree of dyadic cubes
that was introduced just before Lemma \ref{lemkey}.

\begin{definition}\label{defcorona}
Fix some constants $\lambda>2$ and $\eta,\ve>0$.
A corona decomposition of $\mu$ (with parameters $\lambda,\eta,\theta$) 
is a partition of $\DD^\mu$ into a family of trees $\{\TT_i\}_{i\in I}
$ of dyadic $\mu$-cubes and a collection $\BZ$ of {\em bad} $\mu$-cubes (that is,
$\DD^\mu = \BZ\cup \bigcup_{i\in I}\TT_i$, with $\BZ\cap \TT_i=\varnothing$ for all $i\in I$, and
$\TT_i\cap \TT_j=\varnothing$ for all $i\neq j$) which satisfies the following properties:
\begin{itemize}
\item The family $\BZ$ and the collection $\RR$ of all roots of the trees $\{\TT_i\}_{i\in I}$ 
satisfy a Carleson packing condition. That is, there exists $c>0$ such that for every $R\in\DD^\mu$,
$$\sum_{Q\in \BZ\cup\RR:Q\subset R}\mu(Q) \leq c\,\mu(R).$$

\item Each $Q\in\bigcup_{i\in I}\TT_i$ satisfies $b\beta_\infty(\lambda Q)\leq\ve$.

\item For each tree $\TT_i$ there exists a (possibly rotated) $n$-dimensional Lipschitz graph
$\Gamma_i$ with Lipschitz constant $\leq \eta$ such that
$\dist(x,\Gamma_i)\leq \ve\,\ell(Q)$ whenever $x\in\lambda Q$ and $Q\in\TT_i$.
\end{itemize}
\end{definition}

\vvv
To summarize, the existence of a corona decomposition implies that the dyadic $\mu$-cubes can be partitioned into a family of trees $\{\TT_i\}_{i\in I}$
 and a family of bad $\mu$-cubes $\BZ$. Roughly speaking, the first property in the definition says there are not too many
families of trees and not too many bad $\mu$-cubes. The second property says that if $Q$ is contained in some tree, then 
$\mu$ is very close too a flat measure
near $Q$, while the third one states that for each tree there exists an associated 
$n$-dimensional Lipschitz graph which approximates $\supp\mu$ at the level of the $\mu$-cubes from the tree.

It is shown in \cite{DS1} (see also \cite{DS2}) that if $\mu$ is uniformly rectifiable then it
admits a corona decomposition for all parameters $\lambda>2$ and $\eta,\ve>0$. Conversely,
the existence of a corona decomposition for a single set of parameters $\lambda>2$ and $\eta,\ve>0$
implies that $\mu$ is uniformly rectifiable.

For a given tree $\TT$ from the corona decomposition of a uniformly rectifiable
measure $\mu$, we denote by $R_\TT$ its root and by $\Gamma_\TT$ its associated Lipschitz graph, given by 
third condition in Definition \ref{defcorona}.

Let us remark that, in general, the stopping $\mu$-cubes $\sss(\TT)$ may have very different side lengths. This may cause some troubles
in some of the arguments below. As in \cite{DS1}, this problem is easily solved by defining the following 
function associated
to $\TT$. Given $x\in\supp(\mu)$, one sets
$$d_\TT(x) = \inf_{Q\in\TT}\,\bigl[\ell(Q) + \dist(x,Q)\bigr].$$
Observe that $d_\TT$ is Lipschitz with constant $1$. We denote
$$G(\TT)=\bigl\{x\in \lambda R_\TT:\,d_\TT(x)=0\bigr\}.$$
It is easy to check that $G(\TT)\subset\Gamma_\TT\cap \overline{R_\TT}$.
For each $x\in \supp(\mu)$ such that $d_\TT(x)>0$,
let $Q_x$ be a dyadic $\mu$-cube
containing $x$ such that 
\begin{equation} \label{eqqx}
\frac{d_\TT(x)}{20A} < \ell(Q_x) \leq \frac{d_\TT(x)}{10A},
\end{equation}
where $A\gg1$ is some constant to be fixed below.
 Then, $\reg(\TT)$ is a maximal (and thus disjoint)
subfamily of $\{Q_x\}_{x\in A R_\TT}$ (recall that $R_\TT$ is the root of $\TT$). If $d_\TT(x)=0$,
then $Q_x$ can be identified with the point $x.$

\begin{lemma} \label{lemregul} 
Let $\mu$ be uniformly rectifiable and $\TT$ a tree of its corona decomposition and denote by $R_\TT$ its root. If we choose $1\ll A^2\ll\lambda$  big enough, then the family
of $\mu$-cubes $\reg(\TT)$ satisfies:
\begin{itemize}
\item[(a)] $AR_\TT\subset G(\TT)\cup\bigcup_{Q\in\reg(\TT)}Q$.
\item[(b)] If $Q\in\TT$, $P\in\reg(\TT)$, and $P\cap Q\neq\varnothing$, then $\ell(P)< \ell(Q)/2$, and so $P\subset Q$. 
\item[(c)] If $P,Q\in \reg(\TT)$ and $AP\cap AQ\neq
\varnothing$, then $\ell(Q)/2 \leq \ell(P) \leq 2 \ell(Q)$.

\item[(d)] If $Q\in\DD^\mu$ is contained in $AR_\TT$ and $Q$ contains some $\mu$-cube from $\reg(\TT)$,
then
 $b\beta_\infty(Q)\leq c(A,\lambda)\,\ve$ and $\dist(x,\Gamma_\TT)\leq c(A,\lambda)\ve\ell(Q)$ for all $x\in Q$.
\end{itemize}
\end{lemma}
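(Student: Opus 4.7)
The plan is to verify the four properties in order, exploiting throughout that $d_\TT$ is $1$-Lipschitz and that $\ell(Q_x)\approx d_\TT(x)/A$, so every Whitney-type estimate carries a factor of $A$ of slack to absorb combinatorial constants; $A$ will be fixed large first, and $\lambda$ will then be taken much larger than $A^2$.

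Part (a) is immediate: if $d_\TT(x)=0$ then $x\in G(\TT)$ (using $AR_\TT\subset\lambda R_\TT$), while otherwise the dyadic $\mu$-cube $Q_x$ contains $x$, and by the maximality of $\reg(\TT)$ within $\{Q_z\}_{z\in AR_\TT}$, $Q_x$ is contained in some $P\in\reg(\TT)$. For (b), write $P=Q_{y_0}$ with $y_0\in P$; since $P\cap Q\ne\varnothing$ and $\diam(P)\le\ell(P)$, one has $\dist(y_0,Q)\le\ell(P)$, and therefore
$$\ell(P)\le\frac{d_\TT(y_0)}{10A}\le\frac{\ell(Q)+\ell(P)}{10A},$$
forcing $\ell(P)<\ell(Q)/2$ for $A\ge 1$; dyadic nesting then rules out $Q\subsetneq P$ and yields $P\subsetneq Q$. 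For (c), pick $y_0\in P$, $z_0\in Q$ with $Q_{y_0}=P$, $Q_{z_0}=Q$, and $w\in AP\cap AQ$. Then $|y_0-w|\le\diam(P)+(A-1)\ell(P)\le A\ell(P)$, and analogously for $z_0$, so the $1$-Lipschitz property of $d_\TT$ gives $|d_\TT(y_0)-d_\TT(z_0)|\le A(\ell(P)+\ell(Q))$. Inserting $10A\ell(P)\le d_\TT(y_0)<20A\ell(P)$ and the analogous bound for $z_0$, and assuming without loss of generality $\ell(P)\ge\ell(Q)$, one gets $9\ell(P)\le 21\ell(Q)$, which by the dyadic constraint $\ell(P)/\ell(Q)\in\{1,2,4,\dots\}$ sharpens to $\ell(P)\le 2\ell(Q)$.

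Property (d) is the main point, and it is where $A^2\ll\lambda$ enters. Fix $P\in\reg(\TT)$ with $P\subset Q\subset AR_\TT$ and write $P=Q_{y_0}$. The definition of $d_\TT$ yields $Q''\in\TT$ with
$$\ell(Q'')+\dist(y_0,Q'')\le 2\,d_\TT(y_0)\le 40A\,\ell(Q).$$
Since all ancestors of $Q''$ inside $R_\TT$ still belong to $\TT$, one produces $Q^*\in\TT$ with $\ell(Q)\le\ell(Q^*)\lesssim A\,\ell(Q)$ (in the degenerate case $\ell(Q)\ge\ell(R_\TT)$ one simply takes $Q^*=R_\TT$ and uses $\ell(Q)\lesssim A\,\ell(R_\TT)$). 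A short triangle inequality then yields $\dist(u,Q^*)\lesssim A\,\ell(Q^*)$ for every $u\in 2B_Q$, so choosing $\lambda\gtrsim A$, which is permitted by $A^2\ll\lambda$, gives $2B_Q\subset\lambda Q^*$. The assumption $b\beta_\infty(\lambda Q^*)\le\ve$ then supplies an $n$-plane $L$ controlling both suprema on the larger set $2B_{\lambda Q^*}\supset 2B_Q$ at scale $\ell(\lambda Q^*)$, and after dividing by $\ell(Q)$ and using $\ell(\lambda Q^*)\lesssim A\lambda\,\ell(Q)$ we obtain $b\beta_\infty(Q)\le c(A,\lambda)\,\ve$. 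Finally, for $x\in Q\subset\lambda Q^*$, the third bullet of Definition \ref{defcorona} applied to $Q^*$ gives $\dist(x,\Gamma_\TT)\le\ve\,\ell(Q^*)\lesssim A\ve\,\ell(Q)$.

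The main obstacle is step (d): locating a $\TT$-cube $Q^*$ at a scale comparable to $\ell(Q)$ and showing that $2B_Q$ is absorbed by its inflation $\lambda Q^*$, which is precisely the structural reason for the restriction $A^2\ll\lambda$.
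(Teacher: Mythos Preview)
Your argument is correct and follows essentially the same route as the paper: parts (a)--(c) are handled identically (in (c) you invoke the $1$-Lipschitz property of $d_\TT$ directly, whereas the paper passes through an almost-minimizing cube $P_0\in\TT$, but the resulting inequality $9\ell(P)\le 21\ell(Q)$ is the same), and in (d) both arguments locate a tree cube $Q^*\in\TT$ at scale $\lesssim A\,\ell(Q)$ near $Q$ and then absorb $Q$ into $\lambda Q^*$ to transfer the $b\beta_\infty$ and $\dist(\cdot,\Gamma_\TT)$ bounds. One small slip worth flagging: in your degenerate case $Q^*=R_\TT$ with $\ell(Q)\lesssim A\,\ell(R_\TT)$, the triangle inequality only yields $\dist(u,Q^*)\lesssim A\,\ell(Q)\lesssim A^2\,\ell(Q^*)$, not $A\,\ell(Q^*)$, so this case is precisely where $\lambda\gtrsim A^2$ (rather than merely $\lambda\gtrsim A$) is required --- your hypothesis $A^2\ll\lambda$ does cover it, but the intermediate claim should be corrected.
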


Because of (b) and (c), in a sense, the family $\reg(\TT)$, can be considered as a regularized version 
of the $\mu$-cubes from $\sss(\TT)$. This is why we use the notation $\reg(\Gamma_\TT)$.

\begin{proof}(a) This is a straightforward consequence of the definition of $\reg(\TT)$.

\vspace{1mm} \noindent(b) By the construction of $\reg(\TT)$, there exists some $x\in P$ such that $\ell(P)\leq d_\TT(x)/10A$.
Since $Q\cap P\neq\varnothing$, we have
$$d_\TT(x)\leq\ell(Q)+\dist(x,Q)\leq \ell(Q)+\ell(P).$$
Thus,
$$\ell(P)\leq \frac{d_\TT(x)}{10A}\leq \frac{\ell(Q)+\ell(P)}{10A},$$
and then it follows that $\ell(P)\leq \ell(Q)/(10A-1)<\ell(Q)/2$.

\vspace{1mm} \noindent(c) Consider $P,Q \in \reg(\TT)$ such that $AP\cap AQ\neq
\varnothing$. By construction, there exists some $x\in P$ such that
$\ell(P)\geq d_\TT(x)/20A$ and some
$\mu$-cube $P_0\in \TT$  such that
 $$\dist(x,P_0) + \ell(P_0)\leq 1.1
d_\TT(x) \leq 22A\ell(P).$$ Thus, for any $y\in Q$,
\begin{eqnarray*}
\dist(y,P_0) + \ell(P_0) &\leq& \diam(AQ) + \diam(AP) +
\dist(x,P_0) + \ell(P_0) \\
& \leq & (2A-1)\,\ell(Q) + (2A-1)\,\ell(P) + 22A\ell(P).
\end{eqnarray*}
 So $d_\TT(y) \leq (2A-1)\,\ell(Q) + (24A-1)\,\ell(P)$ for all $y\in Q$. Therefore,
$$\ell(Q) \leq \frac1{10A}\,\bigl(
2A\,\ell(Q) + 24A\,\ell(P)\bigr)= 0.2\,\ell(Q) + 2.4\,\ell(P)
,$$ which yields $\ell(Q)\leq 3\,\ell(P)$. This implies that $\ell(Q) \leq 2\ell(P)$ as $\ell(P)$ and
$\ell(Q)$ are dyadic numbers.

The inequality $\ell(P) \leq 2\ell(Q)$ is proved in an analogous
way.

\vspace{3mm} \noindent(d) Take now 
$Q\in\DD^\mu$ with $Q\subset AR_\TT$ which contains some $\mu$-cube from $\reg(\TT)$. By \rf{eqqx},
this implies that there exists some $x\in Q$ such that $d_\TT(x)\leq20A\,\ell(Q)$, and thus
there exists some $P_0\in\TT$ such that
\begin{equation}\label{eqdo48}
\dist(x,P_0) + \ell(P_0) \leq 1.1\,d_\TT(x)\leq 22A\,\ell(Q).
\end{equation}
In particular, $\ell(P_0) \leq 22A\,\ell(Q).$ Suppose for simplicity that 
$A$ is chosen so that $22A$ is a dyadic number.
Consider now $Q_0\in\DD^\mu$ which contains $P_0$ with $\ell(Q_0)=22A\,\ell(Q)$.
Observe that, by \rf{eqdo48},
$$\dist(Q,Q_0)\leq\dist(Q,P_0)\leq 22A\,\ell(Q)=\ell(Q_0),$$
and thus $Q\subset 3Q_0$.

So we have shown that there exists some $\mu$-cube $Q_0$ with $3Q_0\supset Q$ and $Q_0\supset P_0\in\sss(\TT)$, and moreover $\ell(Q_0)=22A\,\ell(Q)$, with $Q\subset AR$. If the parameter $\lambda$ is taken
big enough in the corona decomposition ($\lambda\gg A^2$, say), then it is easy to check that $3Q_0\subset \lambda R$, for some
$R\in\TT$, with $\ell(R)\approx\ell(Q_0)$ (with some constant depending on $A$ and $\lambda$).
As a consequence, $b\beta_\infty(Q)\lesssim b\beta_\infty(Q_0)\lesssim b\beta_\infty(R)\lesssim\ve$,
with all the constants here depending on $A$ and $\lambda$. 
Analogously, we infer that 
$\dist(x,\Gamma_\TT)\leq c(A,\lambda)\ve\ell(Q)$ for all $x\in Q$.
\end{proof}

Observe that, if $Q\in\TT$, then
$Q= \bigl(G(\TT)\cap Q\bigr) \cup \bigcup_{P\in\reg(T):P\subset Q} P$. Further, the union is disjoint since, from the property (b) above, it turns out that 
\begin{equation}\label{eqd398}
G(\TT)\cap R_\TT\cap \bigcup_{P\in \reg(\TT)}P
=\varnothing.
\end{equation}


\subsection{The proof of Theorem \ref{teorectif}}

If $\alpha_p(x,t)^2\,d\mu(x)\,\dfrac{dr}r$ is a Carleson measure for some $p\in[1,2]$, then 
$\alpha(x,t)^2\,d\mu(x)\,\dfrac{dr}r$ is also a Carleson measure, by Lemma \ref{lemalfas}. By the
results in \cite{Tolsa-plms}, this implies that $\mu$ is uniformly rectifiable.
Therefore, to prove Theorem \ref{teorectif}, it is enough to show that
$\alpha_2(x,t)^2\,d\mu(x)\,\dfrac{dr}r$ is a Carleson measure, taking into account that $\alpha_p(B)\lesssim
\alpha_2(B)$ for $p\leq2$. By standard arguments, 
using Lemma \ref{lemrestric1}, this
is equivalent to showing that
\begin{equation}\label{eqsu88}
\sum_{Q\in\DD^\mu:Q\subset R} \alpha_2(Q)^2\mu(Q)\lesssim \mu(R)\qquad\mbox{for all $R\in\DD^\mu$,.}
\end{equation}
Our main tool to prove this inequality will be the corona decomposition of $\mu$ described above.

We will show that \rf{eqsu88} holds for uniformly rectifiable sets following arguments analogous to the ones
of \cite[Chapter 15]{DS1}, where it is shown that the existence of a corona decomposition implies that $\beta_2(x,t)^2\,d\mu(x)\,\dfrac{dr}r$ is a Carleson measure.
To this end, arguing as in \cite[Chapter 15]{DS1}, it turns out that it is enough to show that for every tree
$\TT$ from the corona decomposition of $\mu$,
\begin{equation}\label{eqtree4} \sum_{Q\in\TT:Q\subset R} \alpha_2(Q)^2\mu(Q)\lesssim \mu(R)\qquad\mbox{for all $R\in\TT$}.
\end{equation}
The rest of this subsection is devoted to prove this inequality.

\begin{lemma}\label{lemaux77}
Let $\mu$ be an AD-regular measure which is uniformly rectifiable, and $\TT$ be a tree from its
corona decomposition, as described above. For each $Q\in\reg(\TT)$ there exists a function
$g^Q$ supported on $2B_Q\cap \Gamma_\TT$ such that
\begin{equation}\label{eqsum49}\int_{\Gamma_\TT} g^Q d\HH^n = \mu(Q) \qquad\text{and}\qquad
\sum_{Q\in\reg(\TT)} g^Q \lesssim 1.
\end{equation}
\end{lemma}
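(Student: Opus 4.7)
The plan is to take $g^Q$ proportional to the indicator of $B_Q \cap \Gamma_\TT$. Specifically, I would set
$$g^Q := c_Q\,\chi_{B_Q \cap \Gamma_\TT}, \qquad c_Q := \frac{\mu(Q)}{\HH^n(B_Q \cap \Gamma_\TT)},$$
so that the integral identity $\int g^Q\,d\HH^n = \mu(Q)$ and the support condition $\supp g^Q \subset 2B_Q \cap \Gamma_\TT$ are automatic. Two things then need to be checked: the normalizing constant $c_Q$ is bounded above uniformly in $Q \in \reg(\TT)$, and the family $\{B_Q\}_{Q \in \reg(\TT)}$ has bounded overlap on $\Gamma_\TT$.

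For the normalization, I recall that the center $z_Q$ of $B_Q$ lies in $Q$ and that $r(B_Q) \approx \ell(Q)$. Since $Q \in \reg(\TT)$, part (d) of Lemma \ref{lemregul} yields $\dist(z_Q,\Gamma_\TT) \leq c(A,\lambda)\,\ve\,\ell(Q)$. Taking $\ve$ sufficiently small (depending on $A$ and $\lambda$), one can pick $y_Q \in \Gamma_\TT$ with $|y_Q - z_Q| \leq r(B_Q)/2$, and then $B_Q \cap \Gamma_\TT$ contains a graph disk of radius $\approx \ell(Q)$. Because $\Gamma_\TT$ is an $n$-dimensional Lipschitz graph with controlled Lipschitz constant, its $\HH^n$ measure on such a disk is $\approx \ell(Q)^n \approx \mu(Q)$, so $c_Q \approx 1$.

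For the bounded overlap, fix $x \in \Gamma_\TT$ and let $\FF_x := \{Q \in \reg(\TT) : x \in B_Q\}$. If $P, Q \in \FF_x$, then $|z_P - z_Q| \leq r(B_P) + r(B_Q) \lesssim \ell(P) + \ell(Q)$, so $\dist(P,Q) \lesssim \ell(P) + \ell(Q)$. Picking any point $p \in P$, the inequality $\dist(p,Q) \leq \diam(P) + \dist(P,Q) \lesssim \ell(P) + \ell(Q)$ shows, provided $A$ was chosen large enough in the construction of $\reg(\TT)$, that $p \in AP \cap AQ$; Lemma \ref{lemregul}(c) then forces $\ell(P) \approx \ell(Q)$. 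Thus all cubes in $\FF_x$ share a common side length $\ell$, are pairwise disjoint subsets of $\supp(\mu) \cap B(x, C\ell)$, and each has $\mu$-mass $\approx \ell^n$; AD-regularity of $\mu$ then gives $|\FF_x| \lesssim 1$. Summing,
$$\sum_{Q \in \reg(\TT)} g^Q(x) = \sum_{Q \in \FF_x} c_Q \lesssim 1.$$

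The main obstacle is the bounded-overlap step: one needs to convert Euclidean proximity of two balls $B_P$ and $B_Q$ into the $\mu$-cube condition $AP \cap AQ \neq \varnothing$ required by Lemma \ref{lemregul}(c). This is precisely why the regularization parameter $A$ is taken large in the construction of $\reg(\TT)$, and it is the one place where the choice of $A$ enters in a nontrivial way.
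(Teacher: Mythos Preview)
Your proof is correct and follows essentially the same approach as the paper: define $g^Q$ as a constant multiple of the indicator of a ball intersected with $\Gamma_\TT$, use Lemma~\ref{lemregul}(d) (with $\ve$ small) to see that $\Gamma_\TT$ passes through $B_Q$ so that the normalizing constant is $\approx 1$, and use Lemma~\ref{lemregul}(c) together with AD-regularity to obtain bounded overlap. The only cosmetic differences are that the paper uses $2B_Q$ rather than $B_Q$ for the support (both work), and that you spell out the bounded-overlap step in more detail than the paper does; one small imprecision is that the cubes in $\FF_x$ have \emph{comparable} rather than literally equal side lengths, but your counting argument goes through unchanged.
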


\begin{proof}  Recall that, for $Q\in\reg(\TT)$ and $x\in Q$, 
 $\dist(x,\Gamma_\TT)\leq c(A,\lambda)\ve\ell(Q)$. Thus, if $\ve$ is chosen
small enough in Definition \ref{defcorona} (for given $A$ and $\lambda$), $\dist(x,\Gamma_\TT)\leq r(B_Q)/10$ for all $x\in Q$.
Therefore, $\frac32B_Q\cap \Gamma_\TT\neq\varnothing$ and so $\HH^n(2B_Q\cap\Gamma_\TT)\approx\ell(Q)^n$.
We define
$$g^Q = \frac{\mu(Q)}{\HH^n(2B_Q\cap\Gamma_\TT)}\,\chi_{2B_Q\cap\Gamma_\TT}.$$
Since $\mu(Q)\approx\HH^n(2B_Q\cap\Gamma_\TT)$, we deduce that $\|g^Q\|_{L^\infty(\Gamma_\TT)}\lesssim1$.
From the property (b) in Lemma \ref{lemregul}, it easily follows that
$\sum_{Q\in\reg(\TT)}\chi_{2B_Q}\lesssim1$ if $A$ is big enough, and thus
\rf{eqsum49} follows.
\end{proof}

For $Q\in\reg(\TT)$, let $g^Q$ be the function associated to $Q$, given by the preceding lemma.
 Recall that in $AR_\TT\setminus \bigcup_{Q\in\reg(\TT)}Q\subset G(\TT)$, $\mu$ is 
absolutely continuous with respect to $\HH^n_\Gamma$. In fact,
$$\mu\lfloor \Bigl(AR_\TT\setminus\bigcup_{Q\in\reg(\TT)}Q\Bigr)=g_0 \,\HH^n_{\Gamma},$$ 
with $\|g_0\|_{L^\infty(\HH^n_\Gamma)}\lesssim1$.
Consider the measure
$$\sigma= g_0 + \sum_{Q\in\reg(\TT)}g^Q.$$
To prove \rf{eqtree4}, we will use that, roughly speaking, $\sigma$ approximates $\mu$ on $\TT$, 
and we will apply the results obtained in the previous section for Lipschitz graphs to the measure $\sigma$.

\begin{lemma}
Let $\TT$ and $\sigma$ be as above. Then, every $Q\in\TT$ satisfies 
$\sigma(2B_Q) \approx\ell(Q)^n.$ 
Also, if  $L$ is an $n$-plane that minimizes $\alpha_{2,\sigma}(10B_Q)$,
$$W_2(\vphi_Q\mu,\,c\,\vphi_Q\HH^n_L)^2\lesssim \alpha_{2,\sigma}(10B_Q)^2\,\ell(Q)^n+
\sum_{P\in \reg(\TT):P\subset 50Q}\mu(P)\,\ell(P)^2.$$
\end{lemma}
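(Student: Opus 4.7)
For the mass bound $\sigma(2B_Q)\approx\ell(Q)^n$, the upper bound is immediate from $g_0\lesssim 1$, $\sum_{P\in\reg(\TT)}g^P\lesssim 1$, and $\HH^n(\Gamma_\TT\cap 2B_Q)\lesssim\ell(Q)^n$. For the lower bound I would use the decomposition $Q=(G(\TT)\cap Q)\sqcup\bigcup_{P\in\reg(\TT),\,P\subset Q}P$, which is valid since $Q\in\TT\subset AR_\TT$, by Lemma \ref{lemregul}(a)--(b) and \rf{eqd398}. On the first piece $g_0\HH^n$ coincides with $\mu$, contributing $\mu(G(\TT)\cap Q)$. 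For each $P\in\reg(\TT)$ with $P\subset Q$, Lemma \ref{lemregul}(b) gives $\ell(P)<\ell(Q)/2$ and hence $2B_P\subset 2B_Q$, so $\int g^P\,d\HH^n=\mu(P)$ contributes in full. Summing yields $\sigma(2B_Q)\geq\mu(Q)\approx\ell(Q)^n$.

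For the Wasserstein inequality my strategy is to use $\sigma$ as a bridge,
\[W_2(\vphi_Q\mu,c\vphi_Q\HH^n_L)\leq W_2(\vphi_Q\mu,c_1\vphi_Q\sigma)+W_2(c_1\vphi_Q\sigma,c\vphi_Q\HH^n_L),\]
with $c_1$ chosen so that the two middle measures have equal mass, and to bound the two terms separately.

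For the first term I would build a transference plan piece by piece. On $G(\TT)$ where $\mu=g_0\HH^n$ the identity map transports at zero cost. For each $P\in\reg(\TT)$ that meets $\supp(\vphi_Q)=3B_Q$, Lemma \ref{lemregul}(b)--(d) places $P$ inside a modest dilation of $Q$ (in particular $P\subset 50Q$) and gives $\diam(P\cup 2B_P)\lesssim\ell(P)$. I would pair the restriction $\chi_P\vphi_Q\mu$ with a suitable renormalization of $\vphi_Q g^P\HH^n$ on $2B_P\cap\Gamma_\TT$ via any coupling supported on $(P\cup 2B_P)\times(P\cup 2B_P)$, obtaining per piece a cost squared $\lesssim\ell(P)^2\vphi_Q\mu(P)\leq\ell(P)^2\mu(P)$ by the trivial bound $W_2^2\leq\diam^2\cdot\text{mass}$. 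Summing gives the stated error $\sum_{P\in\reg(\TT),\,P\subset 50Q}\mu(P)\ell(P)^2$.

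For the second term, $\vphi_Q$ is supported in $3B_Q\subset 10B_Q$ where $\vphi_{10B_Q}\equiv 1$, and Part~1 (applied to $B_Q$ and $10B_Q$) supplies the mass conditions $\sigma(B_Q)\approx\sigma(10B_Q)\approx\ell(Q)^n$ needed to adapt the localization/projection argument of Lemma \ref{lemrestric01} to the measure $\vphi_{10B_Q}\sigma$. This yields
\[W_2(c_1\vphi_Q\sigma,c\vphi_Q\HH^n_L)\lesssim W_2(\vphi_{10B_Q}\sigma,c''\vphi_{10B_Q}\HH^n_L),\]
whose right-hand side is by definition comparable to $\alpha_{2,\sigma}(10B_Q)\cdot\ell(Q)^{1+n/2}$, so that squaring produces the first term on the right of the stated inequality. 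The main obstacle will be the estimate of the first term: the regularized cubes $P$ lying near $\partial(3B_Q)$ have $\vphi_Q$ varying substantially across $P\cup 2B_P$, so that the naive mass pairing can fail to match masses accurately. I expect to handle this by exploiting the quadratic decay and gradient bound \rf{eqfi00} of $\vphi_Q$, which keep both its value and its oscillation on such $P$ small and tightly comparable, analogously to the arguments in Lemma \ref{lem11}.
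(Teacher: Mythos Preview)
Your Part 1 is essentially the paper's argument. One small wrinkle: the implication ``$\ell(P)<\ell(Q)/2\Rightarrow 2B_P\subset 2B_Q$'' is not literal, since $r(B_Q)$ is only comparable to $\ell(Q)$ up to the AD-regularity constants. The paper handles this by treating separately the finitely many $P\subset Q$ with $\ell(P)>c_{11}\ell(Q)$, for which $\sigma(2B_Q)\gtrsim\sigma(2B_P)\approx\ell(P)^n\approx\ell(Q)^n$ directly.

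For Part 2 your second step (localizing $\vphi_Q\sigma$ via Lemma~\ref{lemrestric01}) is correct, but the paper organizes the argument differently in a way that dissolves the obstacle you flag. Rather than bridge through $c_1\vphi_Q\sigma$, it works at the larger scale $10B_Q$ and introduces
\[
\mu_{\vphi,Q}=g_0\,\vphi_{10B_Q}\HH^n_{\Gamma_\TT}+\sum_{P\in I_Q}a_P\,\mu\rest P,\qquad a_P=\frac{\int\vphi_{10B_Q}\,g^P\,d\HH^n}{\mu(P)},
\]
with $I_Q=\{P\in\reg(\TT):2B_P\cap 30B_Q\neq\varnothing\}\subset\{P\subset 50Q\}$. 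The per-cube constants $a_P$ make each piece $a_P\,\mu\rest P$ have \emph{exactly} the mass of $\vphi_{10B_Q}\,g^P\HH^n$, so the product couplings give $W_2(\mu_{\vphi,Q},\vphi_{10B_Q}\sigma)^2\lesssim\sum_{P\in I_Q}\mu(P)\ell(P)^2$ with no leftover. On the other hand, every $P$ meeting $3B_Q$ has $2B_P\subset 20B_Q\subset\{\vphi_{10B_Q}=1\}$ (since $\ell(P)\ll\ell(Q)$ by \rf{eqqx}), so $a_P=1$ there and $\mu_{\vphi,Q}\rest 3B_Q=\mu\rest 3B_Q$. Lemma~\ref{lemrestric01} is then applied to $\mu_{\vphi,Q}$ (not to $\sigma$), yielding $W_2(\vphi_{B_Q}\mu,\,c\vphi_{B_Q}\HH^n_L)\lesssim W_2(\mu_{\vphi,Q},\,c'\vphi_{10B_Q}\HH^n_L)$, and the right side is bounded via the triangle inequality through $\vphi_{10B_Q}\sigma$.

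Your direct coupling of $\vphi_Q\mu$ to $c_1\vphi_Q\sigma$ has a real gap that the quadratic decay does not close. The cubes $P\in\reg(\TT)$ are Whitney for $d_\TT$, not for $3B_Q$: a tiny $P$ sitting in the annulus $3B_Q\setminus 2B_Q$ at distance $\approx\ell(Q)$ from $\partial(3B_Q)$ sees $\vphi_Q\approx 1$ but with oscillation governed only by the gradient bound, giving a per-piece mass defect $|m_P-m'_P|\approx\mu(P)\ell(P)/\ell(Q)$. Redistributing this leftover across $3B_Q$ contributes $\ell(Q)^2\cdot\mu(P)\ell(P)/\ell(Q)=\mu(P)\ell(P)\ell(Q)$ to $W_2^2$, which can be much larger than $\mu(P)\ell(P)^2$. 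The paper's reordering --- localize first via Lemma~\ref{lemrestric01}, then couple piecewise at the scale where the bump is identically $1$ on all relevant $P\cup 2B_P$ --- is precisely what makes the mass bookkeeping trivial.
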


\begin{proof}
The estimate  $\sigma(2B_Q) \lesssim\ell(Q)^n$ for $Q\in\TT$ follows easily from \rf{eqsum49} in the preceding lemma and the
definition of $\sigma$. Let us prove the converse inequality.
Notice first that, for $Q\in\reg(\TT)$, we have $\sigma(2B_Q) \approx\ell(Q)^n$, also by the definition of $\sigma$ and the
preceding lemma. Consider now an arbitrary $\mu$-cube $Q\in\TT$.
If $\mu\bigl(G(\TT)\cap Q\bigr)\geq\frac12\,\mu(Q)$, then we also have $\sigma(2B_Q) \approx\ell(Q)^n$,
because $\sigma\rest Q\geq g_0 \,\HH^n_{\Gamma\cap Q} = \mu\rest\Gamma_\TT\cap Q$, by \rf{eqd398} and the definition of $\sigma$.
On the other hand, if $\mu\bigl(G(\TT)\cap Q\bigr)<\frac12\,\mu(Q)$, then
\begin{equation}\label{eqg53}
\sigma\Bigl(\,\bigcup_{P\in\reg(\TT):P\subset Q} 2B_P
\Bigr)\geq \mu\Bigl(\,\bigcup_{P\in\reg(\TT):P\subset Q} P
\Bigr)\geq \frac12\,\mu(Q).
\end{equation}
We may assume that, for all $P\in\reg(\TT)$ contained in $Q$, $\ell(P)\leq c_{11}\ell(Q)$ with $0<c_{11}<1$ small enough. Otherwise, if for some $P\in\reg(\TT)$ contained in $Q$ this fails, we get
$\sigma(Q)\geq\sigma(P)\approx\ell(P)^n\approx\ell(Q)^n$. The assumption that 
$\ell(P)\leq c_{11}\ell(Q)$, with $c_{11}$ small enough for all $P\in\reg(\TT)$ contained in $Q$, ensures
that for all these $P$'s, $2B_P\subset 2B_Q$. Then, from \rf{eqg53} we deduce that $\sigma(2B_Q)\gtrsim
\ell(Q)^n$.

Let us turn our attention to the second statement in the lemma.
Denote by $I_Q$ 
the family of those $\mu$-cubes $P\in\reg(\TT)$ such that $2B_P\cap 30B_Q\neq\varnothing$.
It is easy to check that such $\mu$-cubes $P$ are contained in $50Q$.
Let 
$$\sigma_Q = g_0\chi_{30B_Q}\,\HH^n_\Gamma+\sum_{P\in I_Q}g^P\,\HH^n_\Gamma.$$ 
Since $\sigma\rest 30B_Q =\sigma_Q\rest 30B_Q$, we have
\begin{equation}\label{eqw220}
W_2(\vphi_{10B_Q}\sigma_Q,\,c'\vphi_{10B_Q}\HH^n_L)^2=
W_2(\vphi_{10B_Q}\sigma,\,c'\vphi_{10B_Q}\HH^n_L)^2\lesssim\alpha_{2,\sigma}(10B_Q)^2\ell(Q)^{n+2},
\end{equation}
for the appropriate constant $c'$.
Consider now the measure
$$\mu_{\vphi,Q} = g_0\vphi_{10B_Q}\,\HH^n_\Gamma+\sum_{P\in I_Q}a_P\,\mu\rest P,$$
where 
$a_P = \int \vphi_{10B_Q}\,g^P\,d\HH^n_\Gamma/\mu(P)$.
Notice that
$$\vphi_{10B_Q}\,\sigma_Q = \sum_{P\in I_Q}\vphi_{10B_Q}\,\bigl(g^P + g_0\chi_P\bigr)\,\HH^n_\Gamma =:
\sum_{P\in I_Q}\wt\sigma_P,$$
with $\|\wt\sigma_P\| = \mu_{\vphi,Q}(P)$ for each $P\in I_Q$, by construction.
By taking the following transference plan between $\mu_{\vphi,Q}$ and $\vphi_{10B_Q}\sigma$:
$$\pi = \sum_{P\in I_Q}\frac1{\mu_{\vphi,Q}(P)}\,(\mu_{\vphi,Q}\rest P \otimes \wt\sigma_P),$$
one easily deduces that
$W_2(\mu_{\vphi,Q} ,\vphi_{10B_Q}\sigma)^2 \lesssim \sum_{P\in I_Q}\mu(P)\,\ell(P)^2.$
Therefore, by \rf{eqw220} and the triangle inequality,
$$W_2(\mu_{\vphi,Q},\,c'\,\vphi_{10B_Q}\HH^n_L)^2 \lesssim 
\alpha_{2,\sigma}(10B_Q)^2\ell(Q)^{n+2} +\sum_{P\in I_Q}\mu(P)\,\ell(P)^2.$$ 
Notice also that $\mu_{\vphi,Q}\rest 3B_Q= \mu\rest3B_Q$.
Thus, by  Lemma \ref{lemrestric01}, we obtain
$$W_2(\vphi_{B_Q}\mu,\,c\vphi_{B_Q}\HH^n_L)^2\lesssim W_2(\mu_{\vphi,Q},\,c'\,\vphi_{10B_Q}\HH^n_L)^2,$$ 
and then the lemma follows.
\end{proof}

Now we are ready to prove \rf{eqtree4} and conclude the proof of Theorem \ref{teorectif}.

\begin{lemma}\label{lemteorec}
Let $\mu$ be an AD-regular measure which is uniformly rectifiable, and $\TT$ be a tree of $\mu$-cubes from its
corona decomposition. Then, 
\begin{equation}\label{eqsu89}
\sum_{Q\in\TT:Q\subset R} \alpha_2(Q)^2\mu(Q)\lesssim \mu(R)\qquad\mbox{for every $R\in\TT$,}
\end{equation}
\end{lemma}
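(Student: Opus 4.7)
The plan is to apply the preceding lemma pointwise, sum over $\TT$, and split the resulting bound into a main term handled by Lemma \ref{lemmarectif02} on the approximating Lipschitz graph $\Gamma_\TT$, plus an error term controlled by the disjointness of the regularized stopping cubes $\reg(\TT)$. For a fixed $R\in\TT$ and each $Q\in\TT$ with $Q\subset R$, the preceding lemma combined with the trivial inequality $\alpha_2(Q)^2\,\ell(Q)^{n+2}\lesssim W_2(\vphi_{B_Q}\mu,\,c\vphi_{B_Q}\HH^n_L)^2$ (valid for any $n$-plane $L$), after dividing by $\ell(Q)^2$ and using $\mu(Q)\approx\ell(Q)^n$, gives
$$\alpha_2(Q)^2\,\mu(Q)\;\lesssim\;\alpha_{2,\sigma}(10B_Q)^2\,\mu(Q)\,+\,\frac{1}{\ell(Q)^2}\sum_{P\in\reg(\TT):\,P\subset 50Q}\mu(P)\,\ell(P)^2.$$

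Summing over $Q\in\TT$ with $Q\subset R$ splits the total into a main part $I:=\sum_{Q}\alpha_{2,\sigma}(10B_Q)^2\mu(Q)$ and an error term $II$ from the regular cubes. The error $II$ is handled by Fubini: the condition $P\subset 50Q$ forces $\ell(Q)\gtrsim\ell(P)$ and at each dyadic scale only boundedly many $Q\in\TT$ can satisfy it, so $\sum_{Q:\,P\subset 50Q}\ell(Q)^{-2}\lesssim\ell(P)^{-2}$ via a geometric series. Swapping the order of summation then yields $II\lesssim\sum_{P\in\reg(\TT),\,P\subset cR}\mu(P)\lesssim\mu(cR)\lesssim\mu(R)$, using the disjointness of $\reg(\TT)$ and the AD-regularity of $\mu$.

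To bound $I$, I would build a tree $\TT'$ of dyadic v-cubes of $\Gamma_\TT$ whose root $R^*$ is a single v-cube with $\ell(R^*)\approx\ell(R)$ that contains $R\cup\bigcup_{Q\in\TT,\,Q\subset R}10B_Q$, stopping at v-cubes where the regularity condition \rf{cond5} for $\sigma$ first fails. The preceding lemma provides $\sigma(2B_Q)\approx\ell(Q)^n$ for every $Q\in\TT$, and combined with Lemma \ref{lemregul}(d) and Lemma \ref{lemaux77} this propagates to $\sigma(P)\approx\ell(P)^n$ for every v-cube $P\in\TT'$ at a scale matching some $Q\in\TT$. For each such $Q$ with $Q\subset R$, choose a v-cube $Q^*\in\TT'$ with $10B_Q\subset Q^*$ and $\ell(Q^*)\approx\ell(Q)$; the analogue of Lemma \ref{lemrestric2} for $\sigma$ then gives $\alpha_{2,\sigma}(10B_Q)\lesssim\wt\alpha_{2,\sigma}(Q^*)$. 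Grouping the $\mu$-cubes $Q$ by their enclosing v-cube and using $\sum_{Q\mapsto Q^*}\mu(Q)\lesssim\sigma(Q^*)$ gives
$$I\;\lesssim\;\sum_{Q^*\in\TT'}\wt\alpha_{2,\sigma}(Q^*)^2\,\sigma(Q^*)\;\lesssim\;\sigma(R^*)\;\lesssim\;\mu(R),$$
where the middle inequality is Lemma \ref{lemmarectif02} applied to $\sigma$ on $\Gamma_\TT$ with the tree $\TT'$.

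The hardest part will be verifying that the construction of $\TT'$ is consistent with the regularity condition \rf{cond5} down to the scale of $\reg(\TT)$: the pointwise comparison $\sigma\approx\mu$ must propagate from the cubes of $\TT$ to arbitrary v-cubes of matching scale, and the stopping v-cubes of $\TT'$ have to be shown to appear only within a controlled neighborhood of $\reg(\TT)$, so that their total $\sigma$-mass is $\lesssim\mu(R)$. Once this bookkeeping is in place, the arguments above close the proof of \rf{eqsu89}, and Theorem \ref{teorectif} follows by the standard reduction to \rf{eqtree4}.
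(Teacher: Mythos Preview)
Your proposal is correct and follows essentially the same route as the paper: apply the preceding lemma cube by cube, sum, handle the error term $II$ by Fubini over $\reg(\TT)$, and control the main term $I$ via Lemma~\ref{lemmarectif02} applied to $\sigma$ on $\Gamma_\TT$. The paper does exactly this, though it compresses your construction of $\TT'$ into the single sentence ``This follows easily from Lemma~\ref{lemmarectif02}, taking into account that $b\beta_\infty(\lambda Q)\leq\ve$ for every $Q\in\TT$.''

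One simplification: you do not need to track the stopping mass of $\TT'$ as a separate contribution. Lemma~\ref{lemmarectif02} delivers a clean Carleson bound $\sum_{Q\in\TT'}\alpha_{2,\sigma}(Q)^2\sigma(Q)\lesssim\sigma(R^*)$ with no leftover stopping term, so the only thing you must verify is that condition~\rf{cond5} for $\sigma$ holds for every v-cube at a scale matching some $Q\in\TT$. This follows directly from $b\beta_\infty(\lambda Q)\leq\ve$ (which forces $\supp(\mu)$, and hence $\supp(\sigma)$, to be quantitatively close to a plane near $Q$) together with the mass comparison $\sigma(2B_Q)\approx\ell(Q)^n$ from the preceding lemma. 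Once that is checked, there is no need to worry about where $\TT'$ stops or to bound any stopping mass; your concern in the last paragraph is therefore unnecessary.
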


\begin{proof}
Consider the measure $\sigma$ defined above and take $Q\in\TT$. 
By the preceding lemma, if $L$ is an $n$-plane that minimizes $\alpha_{2,\sigma}(Q)$, we have
$$W_2(\vphi_Q\mu,\,c_{Q,L}\vphi_Q\HH^n_L)^2 \lesssim \alpha_{2,\sigma}(10B_Q)^2\ell(Q)^{n+2}+
\sum_{P\in \reg(\TT):P\subset 50Q}\mu(P)\,\ell(P)^2.$$
Therefore,
$$\sum_{Q\in\TT:Q\subset R} \alpha_2(Q)^2\mu(Q)
\lesssim \sum_{Q\in\TT:Q\subset R}\left[\alpha_{2,\sigma}(10B_Q)^2\ell(Q)^{n}+\frac1{\ell(Q)^2}
\sum_{P\in \reg(\TT):P\subset 50Q}\mu(P)\,\ell(P)^2\right].$$
We claim now that
$$\sum_{Q\in\TT:Q\subset R}\alpha_{2,\sigma}(10B_Q)^2\ell(Q)^{n}\lesssim \ell(R)^n.$$
This follows easily from Lemma \ref{lemmarectif02}, taking into account that $b\beta(\lambda Q)\leq\ve$
for every $Q\in\TT$.
Indeed, if $\ve$ is small enough it is immediate to check that the latter condition implies 
that the assumption \rf{cond5} is satisfied (taking also $A$ and $\lambda$ big enough).
On the other hand, 
\begin{align*}
\sum_{Q\in\TT:Q\subset R}\frac1{\ell(Q)^2}
\sum_{P\in \reg(\TT):P\subset 50Q}\mu(P)\,\ell(P)^2 &\leq \sum_
{P\in \reg(\TT):P\subset 50R}\!\mu(P)\sum_{\substack{Q\in\TT:Q\subset R\\
50Q\supset P}}\frac{\ell(P)^2}{\ell(Q)^2}\\
&\lesssim \sum_{P\in \reg(\TT):P\subset 50R}\!\mu(P) \lesssim \mu(R),
\end{align*}
and thus the lemma follows.
\end{proof}

\vspace{4mm}
\noi {\bf Acknowledgement.} I would like to thank Vasilis Chousionis for many valuable comments
that helped to improve the readability of the paper.

\vvv


\begin{thebibliography}{MTV2}


\bibitem[AGS]{AGS} L. Ambrosio, N. Gigli and G. Savar\'e
{\em Gradient flows in metric spaces and in the space of probability measures.} Second edition. Lectures in Mathematics ETH ZŸrich. BirkhŠuser Verlag, Basel, 2008.

\bibitem[Ch]{Christ} M. Christ, {\em A $T(b)$ theorem with remarks on analytic capacity and the Cauchy integral,} Colloq. Math. 60/61 (1990), no. 2,
601--628. 

\bibitem[Da]{David-wavelets} G. David, {\em Wavelets and singular integrals on curves
and surfaces.} Lecture Notes in Math. {1465}, Springer-Verlag, Berlin,
1991.

\bibitem[DS1]{DS1} G. David and S. Semmes, {\em Singular integrals and
rectifiable sets in $R_n$: Beyond Lipschitz graphs,} Ast\'{e}risque
No. 193 (1991).

\bibitem[DS2]{DS2} G. David and S. Semmes, {\em Analysis of and on uniformly
rectifiable sets,} Mathematical Surveys and Monographs, 38, American
Mathematical Society, Providence, RI, 1993.

\bibitem[Jo1]{Jones-Escorial} P.W. Jones, {\em Square functions, Cauchy integrals, analytic capacity, and harmonic
 measure. } Harmonic analysis and partial differential equations (El Escorial, 1987),  24--68, Lecture Notes in Math., 
1384, Springer, Berlin, 1989.

\bibitem[Jo2]{Jones-traveling}  P.W. Jones, {\em Rectifiable sets and the travelling
salesman problem,} Invent. Math. 102 (1990), 1--15.

\bibitem[L\'e]{Leger} J.C. L\'eger, {\em Menger curvature and
rectifiability,} Ann. of Math. 149 (1999), 831--869.

\bibitem[Lr]{Lerman} G. Lerman, {\em Quantifying curvelike structures of measures by using $L_2$ Jones quantities.} Comm. Pure Appl. Math. 56 (2003), no. 9, 1294--1365.

\bibitem[MMV]{MMV} P. Mattila, M.S. Melnikov and J. Verdera, {\em The Cauchy
integral, analytic capacity, and uniform rectifiability,} Ann. of
Math. (2) 144 (1996), 127--136.

\bibitem[MPr]{Mattila-Preiss} P.\ Mattila and D.\ Preiss, {\em
Rectifiable measures in $\R^n$ and existence of principal values for singular integrals},
J. London Math. Soc. (2) { 52} (1995), no. 3, 482--496.

\bibitem[MV]{MV} P. Mattila and J. Verdera, {\em Convergence of singular integrals with general measures.}
J. Eur. Math. Soc. (JEMS) 11 (2009), no. 2, 257?-271.

\bibitem[MT]{MT} A. Mas and X. Tolsa, {\em Variation for Riesz transforms and uniform rectifiability.}
Preprint (2011). Submitted.

\bibitem[Ok]{Okikiolu} K. Okikiolu, {\em Characterization of subsets of rectifiable curves in ${\bf R}^n$}. 
J. London Math. Soc. (2) 46 (1992), no. 2, 336?348. 

\bibitem[Pe]{Peyre} R. Peyre, {\em Non asymptotic equivalence between $W_2$ distance and $\dot H^{-1}$ norm.}
Preprint (2011).

\bibitem[St]{Stein0} E. M. Stein. {\em Singular integrals and
differentiability properties of functions.} Princeton Univ. Press. Princeton,
N.J., 1970.

\bibitem[To1]{Tolsa-bilip} X. Tolsa, {\em Bilipschitz maps, analytic
capacity, and the Cauchy integral}, Ann. of Math. 162:3 (2005), 1241--1302.

\bibitem[To2]{Tolsa-plms} X. Tolsa, {\em Uniform rectifiability, Calder\'{o}n-Zygmund operators with odd kernel,
and quasiorthogonality,} Proc. London Math. Soc. 98(2) (2009), 393--426.

\bibitem[Vh]{Vihtila} M.\ Vihtil\"a, {\em 
The boundedness of Riesz $s$-transforms of measures in $\bold R^n$.} 
Proc. Amer. Math. Soc. 124 (1996), no. 12, 3797?-3804. 

\bibitem[Vi1]{Villani-ams} C.\ Villani, {\em
Topics in optimal transportation.} Graduate Studies in Mathematics, 58. American Mathematical Society, Providence, RI, 2003.

\bibitem[Vi2]{Villani-oldnew} C.\ Villani, {\em
Optimal transport. Old and new.} Grundlehren der Mathematischen Wissenschaften [Fundamental Principles of Mathematical Sciences], 338. Springer-Verlag, Berlin, 2009. 

\bibitem[Vo]{Volberg} A. Volberg, {\em Calder\'on-Zygmund capacities and operators on nonhomogeneous spaces.}
CBMS Regional Conf. Ser. in Math. 100, Amer. Math. Soc., Providence, 2003.


\end{thebibliography}
\end{document}